\documentclass[11pt]{article}    % Specifies the document style.
\usepackage{amsmath, amssymb, amsthm,pdfsync}
\usepackage{fullpage}
\usepackage{verbatim}
\usepackage{graphicx}
\usepackage{subfig, caption}
\usepackage{cancel}
\usepackage{enumerate}
\usepackage{wrapfig}
\usepackage{mathtools}
\usepackage{xcolor}
\usepackage{epigraph}
\usepackage{cleveref}
\usepackage{overpic}
\usepackage{esint}
\usepackage{dsfont}
\usepackage{color}
\usepackage{titling}

\tolerance = 1500
\hoffset = 0pt
\voffset = 0pt
\textwidth = 470pt
\textheight = 640pt
\topmargin = 0pt
\headheight = 0pt
\headsep = 0pt
\oddsidemargin = 0pt
\evensidemargin = 0pt
\marginparwidth = 10pt
\marginparsep = 10pt
\pagenumbering{arabic}

% LABELS SHOWN IN MARGINS
%  \usepackage[notref,notcite]{showkeys}
%  \textwidth 5.5 in
 % \oddsidemargin .35 in

%\usepackage{xcolor}

%\setlist[itemize]{itemsep=-1mm}

%\textheight 22.5cm
%\topmargin -1.5cm
%%\textwidth 18.8cm
%\hsize \textwidth
%\advance \hsize by -\marginparwidth
%%\oddsidemargin -20mm
%%\evensidemargin \oddsidemargin
%%\usepackage{amsmath}    % remove % for AMS-LaTeX
%%\usepackage{amssymb}
%%\advance\hoffset by 8mm

%\renewcommand{\baselinestretch}{1.0}

%\usepackage[T1]{fontenc}

%\usepackage{hyperref}

\newtheorem{theorem}{Theorem}
\newtheorem{lemma}[theorem]{Lemma}
\newtheorem{lem}[theorem]{Lemma}
\newtheorem{prop}[theorem]{Proposition}

\newtheorem{assumption}[theorem]{Assumption}

\newcommand{\vertiii}[1]{{\left\vert\kern-0.25ex\left\vert\kern-0.25ex\left\vert #1 
    \right\vert\kern-0.25ex\right\vert\kern-0.25ex\right\vert}}
\newcommand{\e}{\varepsilon}

\newcommand{\R}{\mathbb{R}}

\newcommand{\1}{\mathds{1}}

\DeclareMathOperator{\dist}{dist}

\DeclareMathOperator{\Int}{Int}

\numberwithin{equation}{section}
\numberwithin{theorem}{section}

{\setlength{\parindent}{0cm}

%Makes 'assumption' open to use with cleveref
\Crefname{assumption}{Assumption}{Assumptions}
\Crefname{theorem}{Theorem}{Theorems}
\Crefname{lem}{Lemma}{Lemmas}
\Crefname{cor}{Corollary}{Corollaries}
\Crefname{prop}{Proposition}{Propositions}
\Crefname{theorem}{Theorem}{Theorems}
\Crefname{conjecture}{Conjecture}{Conjectures}

\begin{document}
%\newlength{\ipjwidth}\setlength{\ipjwidth}{339.66878pt}\addtolength{\ipjwidth}{-\linewidth}\typeout{\the\ipjwidth}
\title{Super-linear propagation for a general, local cane toads model}

%  \thanks{%
%    The authors thank \'Emeric's travel grant?
%  }%
%
%\author[Chi]{Christopher Henderson}
%\ead{eamil@uni.edu}
%\author[Par]{Beno\^it Perthame}
%\ead{per@uni.edu}
%\author[Chi]{Panagiotis E.~Souganidis}
%\ead{soug@uni.edu}
%\address[Chi]{Department of Mathematics, the University of Chicago, Chicago, IL 60637}
%\address[Par]{Sorbonne Universit\'es, UPMC Univ.~Paris 06, Laboratoire Jacques-Louis Lions, CNRS UMR~7598, Univ.~Paris-Diderot, INRIA de Paris Mamba,  F-75005 Paris}

\setlength\thanksmarkwidth{.5em}
\setlength\thanksmargin{-\thanksmarkwidth}

\author{
	Christopher Henderson\thanks{Corresponding author, Department of Mathematics, The University of Chicago, 5734 S.~University Avenue, Chicago, IL 60637, E-mail: \texttt{henderson@math.uchicago.edu}},
	Beno\^{i}t Perthame\thanks{Sorbonne Universit\'e, CNRS, Univ.~Paris Diderot, INRIA, Laboratoire Jacques-Louis Lions, F-75005 Paris, France,
	\texttt{benoit.perthame@sorbonne-universite.fr}},
	Panagiotis E.~Souganidis\thanks{Department of Mathematics, The University of Chicago, 5734 S.~University Avenue, Chicago, IL 60637, E-mail: \texttt{souganidis@math.uchicago.edu}}
}

\maketitle
\begin{abstract}
\noindent We investigate a general, local version of the cane toads equation, %with unbounded motility,which is an equation 
which models the spread of a population structured by unbounded motility.  We use the thin-front limit approach  of Evans and Souganidis in [{\em Indiana Univ.~Math.~J.}, 1989] to obtain a characterization of the propagation in terms of both the linearized equation and a geometric front equation.  In particular, we reduce the task of understanding the precise location of the front for a large class of equations to analyzing a much smaller class of Hamilton-Jacobi equations.  We are then able to give an explicit formula for the front location in physical space.  One advantage of our approach is that we do not use the explicit trajectories along which the population spreads, which was a basis of previous work.  Our result allows for large oscillations in the motility.
\end{abstract}
%\noindent{\bf Key-Words:} {Structured populations, Reaction-diffusion equations, Logarithmic delay, Front propagation}\\
%\noindent{\bf AMS Class. No:} {35Q92, 45K05, 35C07}
%
%\keywords{Navier-Stokes equation, infinite energy solutions, extended system, long-time behavior, Lyapunov function, asymptotic stability}
%\subjclass{76D05, 35Q30, 76M25, 65M06}

%\tableofcontents

\section{Introduction and Main Results}\label{sec:results}

The cane toads equation models the spread of a population where the motility of the individuals is not constant.  Its name comes from the cane toads in Australia whose invasion has been the subject of intense biological interest in recent years; see for example Phillips et.~al.~\cite{phillips2006invasion} and Shine et.~al.~\cite{Shine}. This phenomenon has been observed more widely, for example, the expansion of bush crickets in Great Britain, see Thomas et.~al.~\cite{Thomas}.  The mathematical model presented here has its roots in the work of Arnold, Desvillettes, and Prevost~\cite{ArnoldDesvillettesPrevost}, Champagnat and M\'el\'eard~\cite{ChampagnatMeleard}, and Benichou et.~al.~\cite{BenichouEtAl}.

\smallskip

The equation that we study is a general, local version of the cane toads equation.   In what follows $t$ represents time, $x$ physical space, and $\theta$ the genetic trait of motility.  The equation is %and can be written
\begin{equation}\label{eq:cane_toads_unscaled}
	\begin{cases}
		u_t = D(\theta) u_{xx} + u_{\theta\theta} + u(1-u) \quad  \text{in}  \quad  \R\times\R^+\times\R^+,\\
		u_\theta(x,0,t) = 0  \quad  \text{in}  \quad  \R\times \R^+,
	\end{cases}
\end{equation}
where $\R^+:=(0,\infty),$ with the diffusion coefficient $D: \R^+ \to \R^+$, a continuous function satisfying:
\begin{assumption}\label{assumption:D}
Let $\overline D^\epsilon(\theta) := D(\theta/\epsilon)/D(1/\epsilon)$. There exists $\overline D:\R^+ \to \R^+$ such that, locally uniformly in  $ \R^+$, $\lim_{\e\to0} \overline D^\e(\theta) = \overline D(\theta)$,
%\[
%	\lim_{\epsilon \to 0} \overline D^\epsilon(\theta) = \overline D(\theta),
%\]
and $\lim_{\theta\to\infty} \overline D(\theta) = \lim_{\theta\to\infty} D(\theta) = \infty$.
%{\bf TO BE REMOVED --  DO WE NEED $p>0$??? -- In addition, $\overline D(\theta)=0$ if and only if $\theta = 0$ and $\overline D(\theta) \to \infty$ as $\theta\to\infty$}.
\end{assumption}

 In fact, the convergence in \Cref{assumption:D} implies immediately that $\overline D(\theta) = \theta^p$ for some $p>0$.  Indeed, $\overline D$ is Borel measurable, as it is the limit of Borel measurable functions, and it is multiplicative because, for any $\theta_1,\theta_2\in \R^+$,
\[
	\overline D(\theta_1 \theta_2)
		= \lim_{\e \to 0} \frac{D(\theta_1\theta_2/\e)}{D(1/\e)}
		= \lim_{\e \to 0} \frac{D(\theta_1/\e)}{D(1/\e)}\frac{D(\theta_1\theta_2/\e)}{D(\theta_1/\e)}
		= \overline D(\theta_1) \overline D(\theta_2).
\]
It is well-known that these two properties imply that $\overline D$ is a homogeneous polynomial.
%Then $f(x) := \log(\overline D( e^x))$ is a measurable additive function.  It is well-known that this implies that $f(x) = px$ for some $p$.  Undoing the change of variables, it follows that $\overline D(\theta) = \theta^p$.  The condition that $\overline D(\theta)$ tends to infinity with $\theta$ implies that $p >0$.

The case most often considered is, up to translation in $\theta$, $D(\theta) = \theta + \underline\theta$ for some $\underline\theta>0$, whence $\overline D(\theta) = \theta$, see~\cite{BerestyckiMouhotRaoul,BCMetal,BHR_acceleration}.  A non-trivial example is
\begin{equation}\label{eq:D_example}
	D(\theta) = \theta (1 + \log(\theta+1) + \sin(\theta)),
\end{equation}
which, despite having arbitrarily large oscillations, nevertheless satisfies $\overline D(\theta)= \theta$.

The biological motivation for studying the equation~\eqref{eq:cane_toads_unscaled} in greater generality is one of modelling: in the present work, we see various propagation rates depending on the asymptotics of $D$ and this may be used to fit the model to the phenomenon being studied.  Indeed, even for data arising from the cane toads invasion in Australia there is some uncertainty over the propagation rate (see~\cite[Table 1]{urban2008toad}).  Further,  there is no reason that the $O(t^{3/2})$ propagation, which is associated to the choice $D(\theta) = \theta + \underline\theta$, should hold for all species with increasing motility.  Hence, it is important to have general models that can be tailored to each species.
\smallskip

We are interested in the long time, large space and motility limit.  Fix a small parameter $\e \in (0,1)$.  Thinking of the time scale as $\epsilon^{-1}$, the scaled function
\[
	u^\epsilon(x,\theta,t) = u\left(\frac{x \sqrt{D(1/\epsilon)}}{\epsilon} , \frac{\theta}{\epsilon}, \frac{t}{\epsilon}\right)
\]
%Here we are thinking of the time scale that we are investigating as being $\epsilon^{-1}$.
satisfies
\begin{equation}\label{eq:cane_toads}
	\begin{cases}
		u^\epsilon_t = \epsilon \overline D^\epsilon(\theta) u^\epsilon_{xx} + \epsilon u^\epsilon_{\theta\theta} + \frac{1}{\epsilon} u^\epsilon(1-u^\epsilon) \quad &\text{ in } \quad \R\times\R^+\times\R^+,\\
		u^\epsilon_\theta(x,0,t) = 0 \quad &\text{ on } \quad  \R \times \R^+,
	\end{cases}
\end{equation}
which  %$\overline D^\epsilon(\theta) = D(\theta/\epsilon)/D(1/\epsilon)$, which
we supplement with the initial condition
\begin{equation}\label{eq:initial_condition}
	u^\epsilon(x,\theta,0) = u_0(x,\theta)\quad \text{ with } \quad 0 \leq u_0 \leq 1,
\end{equation}
where $u_0$ satisfies the following assumption:
\begin{assumption}\label{assumption:u_0}
The initial data $u_0$ is continuous and supported on $G_0$, a $C^3$, open, non-empty, convex subset of $\R\times [0,\infty)$ such that $G_0 \cap (\R^+ \times [0,\infty))$ is bounded; % which is bounded in $\theta$ and $x>0$.  %To simplify the technical details, we assume that if $(x,\theta) \in G_0$, then $(x',\theta)\in G_0$ for all $x' < x$ and
 that is, there exist  $\overline\theta>0$ and $x_r \in \R$ such that $G_0 \subset (-\infty,x_r)\times[0,\overline\theta).$
\end{assumption}
The assumption that $0 \leq u_0 \leq 1$ yields, by the maximum principle, $0 \leq u^\e \leq 1$.  We note that the restrictions that $u_0$ is continuous and that $u_0 \leq 1$ are made for simplicity.  Indeed, we use continuity only to guarantee that $\min_Q u_0 > 0$ for any compact set $Q\subset G_0$ and the necessary modifications to handle the case when $u_0 \not \leq 1$ may be found in~\cite[Lemma~1.2 and (2.5)]{EvansSouganidis}.  In addition, we note that our approach could be generalized to higher spatial dimensions with no added difficulty, only additional notation.  For simplicity, we present here only the one-dimensional model.
%
%We fix a boundary and an initial condition in the scaled variables for technical reasons.
\begin{figure}
\begin{center}
\begin{overpic}[scale = .33]%[grid,tics=10]
		{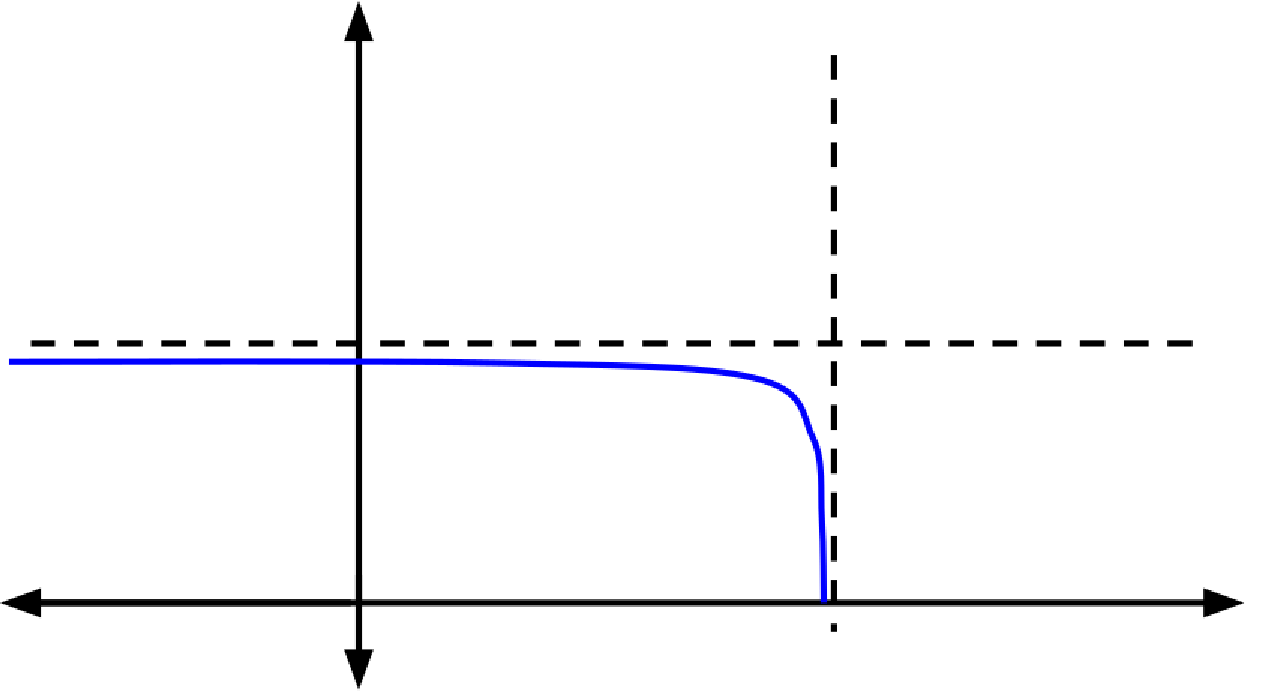}
	\put (95,2){$x$}	
	\put (31,52){$\theta$}	
	\put (64,1){$x_r$}
	\put (93, 30.5){$\overline \theta$}
	\put (15,15) {\color{blue} $G_0$}
\end{overpic}
\begin{overpic}[scale = .33]
		{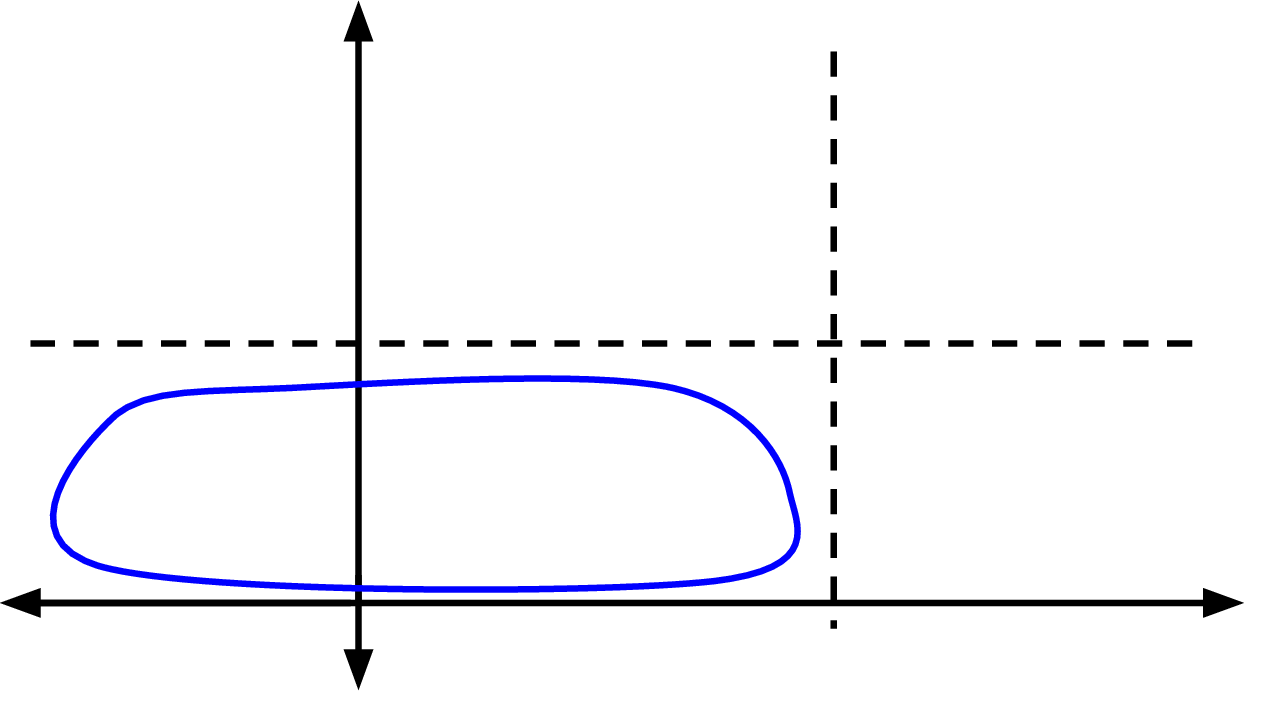}
	\put (95,2){$x$}	
	\put (31,52){$\theta$}	
	\put (64,1){$x_r$}
	\put (93, 30.5){$\overline \theta$}
	\put (15,15) {\color{blue} $G_0$}\end{overpic}
\caption{Two representative examples of $G_0$.}
\end{center}
\end{figure}

\smallskip
To study the behavior of $u^\epsilon$ as $\epsilon$ tends to zero, following Evans and Souganidis~\cite{EvansSouganidis}, we make the transformation $v^\epsilon = - \epsilon \log u^\epsilon$.   This is referred to as the Hopf-Cole transform and is standard in the literature (see also~\cite{BarlesEvansSouganidis,MajdaSouganidis} for applications to reaction-diffusion problems, \cite{Fleming} and references therein for early applications to large deviations and other problems, and~\cite{Cole,Hopf} for the original introduction of the transformation by Cole and Hopf).   Since $0 < u^\e < 1$ in $\R\times \R^+ \times \R^+$, then $0 < v^\e < +\infty$ in $\R\times\R^+ \times \R^+$. Also, $v^\epsilon$ satisfies
\begin{equation}\label{eq:phase}
\begin{cases}
		v^\epsilon_t - \epsilon \overline D^\epsilon(\theta) v^\epsilon_{xx} - \epsilon v^\epsilon_{\theta\theta} + \overline D^\epsilon(\theta)|v^\epsilon_x|^2 + |v^\epsilon_\theta|^2 + 1 - e^{-v^\epsilon/\epsilon} = 0  \quad  &\text{ in }  \quad  \R\times\R^+\times\R^+\\[1mm]
		v^\epsilon_\theta(x, 0,t)=0 \quad &\text{ on } \quad  \R\times \R^+,
		%\quad \text{ with initial conditions } ~~
		%v^\epsilon_0 = -\epsilon \log u_0,
		\end{cases}
\end{equation}
%with initial conditions $v^\epsilon_0(x,\theta) = \epsilon \log u_0$,
with initial conditions $v^\epsilon(x,\theta,0) = v^\epsilon_0(x,\theta)$, where
\[
	v^\epsilon_0
	= \begin{cases}
		-\epsilon \log u_0 \quad &\text{ in }  \quad  G_0,\\[1mm]
		\infty  \quad &\text{ in } \quad  \overline G_0^c.
	\end{cases}
\]
%In addition, $v^\epsilon$ inherits the Neumann boundary conditions from~$u$.  %We point out that~\eqref{eq:phase} inherits the maximum principle from~\eqref{eq:cane_toads}.
From the above, we see that, formally,~\eqref{eq:phase} converges, when $v>0$,  to $v_t + \overline D |v_x|^2 + |v_\theta|^2 + 1 = 0.$
Indeed, the following lemma shows this to be the case.
\begin{prop}\label{prop:phi_convergence}
Suppose that  \Cref{assumption:D} and \Cref{assumption:u_0} hold.  Then, as $\epsilon$ tends to $0$ and locally uniformly in $\R\times  [0,\infty)\times\R^+,$  the $v^\epsilon$'s converge to $I$, %locally uniformly in $\R\times [0,\infty)\times[0,\infty)$ as $\epsilon \to 0$, 
which is the unique solution of
\begin{equation}\label{eq:I}
	\begin{cases}
		\min\left\{ I_t + \overline D(\theta) |I_x|^2 + |I_\theta|^2 + 1, I\right\} = 0 \quad &\text{ in } \quad \R \times \R^+\times \R^+,\\
		\max\left\{ - I_\theta, \min\left\{I_t + |I_\theta|^2 + 1, I\right\}\right\} \geq 0  \quad &\text{ on }  \quad \R\times \{0\}\times \R^+,\\
		\min\left\{ - I_\theta, \min\left\{I_t + |I_\theta|^2 + 1, I\right\}\right\} \leq 0 \quad&\text{ on }  \quad \R\times \{0\}\times \R^+,
	\end{cases}
\end{equation}
and
\begin{equation}\label{eq:I_0_intro}
	I(x,\theta,0) = \begin{cases}
		\infty  \quad &\text{ in }\quad \overline G_0^c,\\[1mm]
		0 \quad &\text{ in }\quad G_0.
	\end{cases}
\end{equation}
\end{prop}
We point out that $\overline D(0) |I_x|^2$ does not appear in the boundary conditions because $\overline D(0)=0$. %{\bf We also mention that~\eqref{eq:I} is an obstacle problem due to the fact that $0 \leq v^\e$ for all $\e$ and, thus, $I \geq 0$.  This is connected to the non-linearity $u^\e(1-u^\e)$ in the original problem~\eqref{eq:cane_toads}.}%one might expect the boundary conditions to have the term $\overline D(0) |I_x|^2$; however, since $\overline D(0)=0$, this term vanishes.
\smallskip

The limit passage in \Cref{prop:phi_convergence} is handled using the half-relaxed limits.  As such, the first and most difficult step is obtaining a priori estimates on $v^\epsilon$ that hold uniformly in $\epsilon$.  A na\"ive approach following~\cite{EvansSouganidis} will fail due to the competing effects of the degeneracy  at $\theta=0$ and the unboundedness at $\theta = \infty$ of $\overline D$.
\smallskip

Recalling  that $u^\epsilon = e^{-v^\epsilon/\epsilon}$, from \Cref{prop:phi_convergence}, one might expect that $u^\e$ converges to one on the zero set of $I$ and zero on the set where $I$ is positive.%is where $u^\epsilon$ converges to one and the set where $I$ is positive is where $u^\epsilon$ converges to zero.  
This is verified by the following theorem.
\begin{theorem}\label{thm}
	Suppose that \Cref{assumption:D} and \Cref{assumption:u_0} hold, and  let $I$ be the unique solution to~\eqref{eq:I} and~\eqref{eq:I_0_intro}.  Then
	\[
		\lim_{\epsilon\to0} u^\epsilon = \begin{cases}
			0 \quad \text{ uniformly on compact subsets of }  \quad \{I > 0\},\\
			1 \quad \text{ uniformly on compact subsets of }   \quad\Int\{I = 0\}.
			\end{cases}
	\]
\end{theorem}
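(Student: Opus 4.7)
The theorem splits into two assertions with rather different flavor, and the proof will proceed by establishing each in turn. The upper bound, that $u^\epsilon \to 0$ uniformly on compact subsets of $\{I > 0\}$, is essentially a direct consequence of \Cref{prop:phi_convergence}. Given a compact $K \subset \{I > 0\}$, the continuity of $I$ (as the unique viscosity solution of \eqref{eq:I_intro}--\eqref{eq:I_0_intro}) yields $m := \min_K I > 0$, and the local uniform convergence $v^\epsilon \to I$ gives $v^\epsilon \geq m/2$ on $K$ for $\epsilon$ small, so $u^\epsilon = e^{-v^\epsilon/\epsilon} \leq e^{-m/(2\epsilon)} \to 0$ uniformly on $K$.

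The lower bound, that $u^\epsilon \to 1$ uniformly on compact subsets of $\Int\{I = 0\}$, is the main work, since the convergence $v^\epsilon \to 0$ only yields $u^\epsilon \geq e^{-\eta/\epsilon}$ for every $\eta > 0$, not $u^\epsilon \to 1$. One must exploit the Fisher--KPP reaction. My plan is to fix $(x_0, \theta_0, t_0) \in \Int\{I = 0\}$ with $\theta_0 > 0$ for concreteness, and choose $\rho, \tau > 0$ small enough that the parabolic cylinder $Q := \overline{B_\rho(x_0, \theta_0)} \times [t_0 - \tau, t_0]$ lies in $\Int\{I = 0\}$ and $\overline D^\epsilon$ stays uniformly bounded above and below on $Q$ for $\epsilon$ small. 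By \Cref{prop:phi_convergence}, $v^\epsilon \to 0$ uniformly on $Q$, so for any fixed $\eta > 0$ one has $u^\epsilon \geq e^{-\eta/\epsilon}$ on $Q$ once $\epsilon$ is sufficiently small.

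Next, I would construct an explicit subsolution $\underline u^\epsilon(x, \theta, t) = g^\epsilon(t)\, \psi(x, \theta)$ of \eqref{eq:cane_toads} on $Q$, where $\psi$ is the principal Dirichlet eigenfunction of the constant-coefficient operator $-\overline D(\theta_0) \partial_{xx} - \partial_{\theta\theta}$ on $B_\rho(x_0, \theta_0)$, normalized so that $\psi(x_0, \theta_0) = \|\psi\|_\infty = 1$, and $g^\epsilon$ solves the scalar logistic-type ODE
\[
    \dot g^\epsilon = -\epsilon\,\bigl(\lambda_\rho + o_\epsilon(1)\bigr)\, g^\epsilon + \frac{g^\epsilon(1 - g^\epsilon)}{\epsilon}, \qquad g^\epsilon(t_0 - \tau) = e^{-\eta/\epsilon},
\]
with $\lambda_\rho$ the associated eigenvalue and $o_\epsilon(1)$ absorbing the deviation of $\overline D^\epsilon(\theta)$ from $\overline D(\theta_0)$ on $B_\rho$ (uniformly small as $\rho, \epsilon \to 0$). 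Integrating the logistic ODE shows $g^\epsilon(t_0) \to 1$ as soon as $\tau > \eta$. Since $\underline u^\epsilon$ vanishes on the lateral boundary of $Q$ and satisfies $\underline u^\epsilon(\cdot, t_0 - \tau) \leq e^{-\eta/\epsilon} \leq u^\epsilon(\cdot, t_0 - \tau)$, the parabolic comparison principle gives $u^\epsilon \geq \underline u^\epsilon$ on $Q$, whence $\liminf_{\epsilon \to 0} u^\epsilon(x_0, \theta_0, t_0) \geq 1$. A standard covering argument then upgrades this pointwise bound to uniform convergence on compact subsets.

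The chief technical obstacle is to make the subsolution construction honest despite the $\theta$- and $\epsilon$-dependent coefficient $\overline D^\epsilon$. When $\overline D(\theta_0) > 0$, picking $\rho$ small forces $\overline D^\epsilon$ to be nearly constant on $B_\rho$ for $\epsilon$ small, with the discrepancy absorbed into the $o_\epsilon(1)$ term. The boundary case $\theta_0 = 0$ is more delicate: one reflects across $\{\theta = 0\}$ to respect the Neumann condition, and because $\overline D^\epsilon(0) \to 0$ one must drop the $x$-dependence in the ansatz and rely solely on the $\theta$-diffusion to propagate the lower bound; the logistic dynamics still drive $\underline u^\epsilon$ up to $1$, so the argument carries through.
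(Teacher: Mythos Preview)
Your argument for $u^\epsilon\to 0$ on $\{I>0\}$ matches the paper's exactly. For the convergence to $1$ on $\Int\{I=0\}$, however, you take a genuinely different route. The paper stays entirely on the $v^\epsilon$ side: it picks the quadratic test function $\psi(x,\theta,t)=|t-t_0|^2+|x-x_0|^2+|\theta-\theta_0|^2$, uses that $I-\psi$ has a strict local max at $(x_0,\theta_0,t_0)$ (since $I\equiv 0$ nearby), lets $(x_\epsilon,\theta_\epsilon,t_\epsilon)$ be a max of $v^\epsilon-\psi$, and reads off from \eqref{eq:phase} that $u^\epsilon(x_\epsilon,\theta_\epsilon,t_\epsilon)-1\geq o(1)$; the conclusion then follows because $(x_\epsilon,\theta_\epsilon,t_\epsilon)$ is a minimum of $u^\epsilon e^{\psi/\epsilon}$. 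The boundary $\theta_0=0$ is handled by shifting the test function slightly, $|\theta-\epsilon^2|^2$, so that the Neumann condition forces $\theta_\epsilon>0$. Your approach instead builds an explicit KPP subsolution $g^\epsilon(t)\psi(x,\theta)$ on a small cylinder and drives it up via the logistic ODE. Both are standard in the front-propagation literature; the paper's choice is shorter and keeps everything inside the viscosity framework already set up for Proposition~\ref{prop:phi_convergence}, while yours is closer in spirit to Aronson--Weinberger/Freidlin and makes the role of the reaction term more transparent.

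One technical caution: your ansatz uses the principal Dirichlet eigenfunction $\psi$ of the \emph{constant-coefficient} operator $-\overline D(\theta_0)\partial_{xx}-\partial_{\theta\theta}$ on a ball, and then proposes to absorb the discrepancy $(\overline D^\epsilon(\theta)-\overline D(\theta_0))\psi_{xx}$ into an $o_\epsilon(1)$ correction in the scalar ODE. This does not quite work as written, because $\psi_{xx}/\psi$ is unbounded near $\partial B_\rho$, so the error term cannot be controlled by a function of $t$ alone. The standard fixes are either to work on a rectangle with a separable eigenfunction $\psi_1(x)\psi_2(\theta)$ (then $\overline D^\epsilon\psi_{xx}+\psi_{\theta\theta}=-(\overline D^\epsilon\mu_1+\mu_2)\psi$ exactly, with bounded coefficient), or to use the principal eigenfunction of the actual operator $-\overline D^\epsilon(\theta)\partial_{xx}-\partial_{\theta\theta}$. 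Either tweak makes your argument go through; the rectangle version also handles $\theta_0=0$ cleanly via a Neumann eigenfunction in $\theta$, without needing to drop the $x$-dependence.
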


%\begin{rem}
	\Cref{thm} may be proved  in more generality.  Following the arguments of \cite[Section 4]{EvansSouganidis}, it is clear that we may replace $u(1-u)$ with $f(u)$ for any $f \in C^2$ such that $f(x) > 0$ if $x \in (0,1)$, $f(x) < 0$ if $x \notin[0,1]$, and $f'(0) = \sup_{u\in[0,1]} f(u)/u.$
Then, in~\eqref{eq:I}, each ``$1$'' is replaced by $f'(0)$.  %Further, one could add periodic dependence in $\theta$.  In other words, one could replace $u(1-u)$ with $f(\theta,u)$ where $f$ is periodic in $\theta$ and satisfies conditions like those above.  In this case, one can build a corrector in $\theta$ and proceed as in the work of Majda and Souganidis~\cite{MajdaSouganidis}.
%\end{rem}
\smallskip

Unfortunately, $I$ is difficult to compute analytically due to the fact that it is the viscosity solution of a variational inequality. 
%Indeed, we characterize the propagation speed by 
%In order to characterize these sets more explicitly, we consider the Riemannian metric associated to the diffusion operator $\overline D^{-1}\partial_x^2 + \partial_\theta^2$.  Let
In order to characterize the sets $\{I>0\}$ and $\Int\{I=0\}$ more explicitly, we consider the geometric front equation
\begin{equation}\label{eq:w_intro}
\begin{cases}
	w_t + 2\sqrt{\overline D(\theta) |w_x|^2 + |w_\theta|^2} = 0 \quad &\text{ in } \quad  \R\times \R^+\times\R^+,\\
	\max\{- w_\theta, w_t + 2 |w_\theta|\} \geq 0 \quad 
		&\text{ on }  \quad  \R \times\{0\} \times \R^+,\\
	\min\{-w_\theta, w_t + 2|w_\theta|\} \leq 0
		\quad  &\text{ on } \quad  \R \times\{0\} \times \R^+,
\end{cases}
\end{equation}
with 
\begin{equation}\label{eq:w_0_intro}
	w(x,\theta,0) = \begin{cases}
		1 \quad \text{ on } \quad  \overline G_0^c,\\
		0\quad \text{ on } \quad  G_0.
	\end{cases}
\end{equation}
It turns out (see~\Cref{sec:geometric}) that the zero level sets of $w$ and $I$ are comparable.   Indeed, we have:
%
%\[
%	d((x,\theta),(y,\eta))
%		= \inf \left\{ \int_0^t \frac{1}{2} \sqrt{\frac{\dot\gamma_1^2}{\overline D(\gamma_2)} + \dot\gamma_2^2} ds : \gamma \in H^1, \gamma(0)  = (x,\theta), \gamma(t) = (y,\eta)\right\}.
%\]
%This is the (time-dependent) metric associated to the diffusion operator.
\begin{prop}\label{prop:geometric}
	Suppose that \Cref{assumption:D} and \Cref{assumption:u_0} hold.  Then, there is a unique solution to~\eqref{eq:w_intro} and~\eqref{eq:w_0_intro} and
	\[
		\lim_{\epsilon\to0} u^\epsilon = \begin{cases}
			0 \quad \text{ uniformly on compact subsets of } \quad \Int\{w = 1\},\\
			1\quad \text{ uniformly on compact subsets of }  \quad \Int\{w = 0\}.\\
			\end{cases}
	\]
\end{prop}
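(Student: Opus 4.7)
The plan is to identify the zero and one level sets of $w$ with, respectively, the zero and positivity sets of $I$, and then appeal to \Cref{thm}. Well-posedness of~\eqref{eq:w_intro}-\eqref{eq:w_0_intro} follows from the standard theory of viscosity solutions for convex, positively $1$-homogeneous Hamilton-Jacobi equations with Ishii-type boundary conditions; the boundary behavior at $\theta = 0$ is handled exactly as for~\eqref{eq:I_intro}.

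The first, easier, inclusion relies on the AM-GM bound
\[
	\overline D(\theta) p_x^2 + p_\theta^2 + 1 \;\geq\; 2\sqrt{\overline D(\theta) p_x^2 + p_\theta^2}.
\]
For any smooth strictly increasing $h\colon[0,\infty)\to [0,1]$ with $h(0)=0$ and $\lim_{s\to\infty}h(s)=1$, we claim that $h(I)$ is a viscosity subsolution of~\eqref{eq:w_intro}. On $\{I>0\}$, the subsolution property of $I$ reads $I_t + \overline D I_x^2 + I_\theta^2 + 1 \leq 0$ (viscosity); writing $A := \sqrt{\overline D I_x^2 + I_\theta^2}$ and using the chain rule together with $h'\geq 0$,
\[
	h(I)_t + 2\sqrt{\overline D (h(I))_x^2 + (h(I))_\theta^2} \;=\; h'(I)\bigl(I_t + 2A\bigr) \;\leq\; -h'(I)(A - 1)^2 \;\leq\; 0,
\]
while the cases $I = 0$ and the Ishii boundary at $\theta = 0$ are settled by direct case analysis of the min/max conditions. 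Since $h(I(\cdot,\cdot,0)) = w_0$, the comparison principle gives $h(I)\leq w$, whence $\{w = 0\}\subset\{h(I) = 0\} = \{I = 0\}$ and so $\operatorname{Int}\{w = 0\}\subset\operatorname{Int}\{I = 0\}$. Then \Cref{thm} yields $u^\epsilon\to 1$ uniformly on compact subsets of $\operatorname{Int}\{w=0\}$.

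The reverse inclusion $\operatorname{Int}\{w=1\}\subset\{I>0\}$ is more delicate, because the infinite initial datum of $I$ outside $G_0$ precludes any symmetric comparison of the form $\psi(w)\geq I$. The plan is instead to characterize both level sets explicitly via the Riemannian distance $d^R(\cdot, G_0)$ associated to the (degenerate) metric $g = \overline D(\theta)^{-1}\, dx^2 + d\theta^2$ on $\R\times[0,\infty)$. A Lax--Oleinik-type representation of the obstacle problem~\eqref{eq:I_intro}, combined with a Cauchy--Schwarz reparametrization of the associated action, gives $\{I(\cdot, t) = 0\} = \{d^R(\cdot, G_0)\leq 2t\}$, while a Hopf--Lax formula for~\eqref{eq:w_intro} with the lower semicontinuous initial datum $\mathbf{1}_{\overline G_0^c}$ gives $\{w(\cdot, t) = 1\} = \{d^R(\cdot, G_0)\geq 2t\}$. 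Combining, $\operatorname{Int}\{w=1\} = \{d^R > 2t\} = \{I > 0\}$, and \Cref{thm} gives $u^\epsilon\to 0$ on compact subsets of $\operatorname{Int}\{w = 1\}$, finishing the proof.

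The main obstacle will be justifying these two representation formulas in the present degenerate setting: the metric $g$ is singular at $\theta = 0$ (where $\overline D$ vanishes), so the associated geodesic distance requires careful treatment near the boundary, and the oblique Neumann conditions on $\theta = 0$ are not covered by the textbook versions of Hopf--Lax or Lax--Oleinik.
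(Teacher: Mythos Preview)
Your approach is essentially the same as the paper's: the first inclusion is verbatim (the paper takes $h=\tanh$), and for the reverse the paper carries out exactly your plan by making the intermediary explicit --- the ``Lax--Oleinik representation'' of the obstacle problem is the linearized action $J$ of~\eqref{eq:action} (so that $J\leq I$ gives $\{I=0\}\subset\{J\leq 0\}$), and your Cauchy--Schwarz reparametrization applied to a minimizer of $J$ is precisely how the paper passes from $\{J\leq 0\}$ to $\{d\leq t\}=\{w=0\}$. The Hopf--Lax formula for $w$ and the fact that optimal trajectories avoid $\theta=0$ are established separately in \Cref{sec:geometric} and the Appendix, addressing exactly the degeneracy obstacle you flag.
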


It also follows from our  analysis  that we may compare $I$ with the solution to the Hamilton-Jacobi equation coming from the linearized cane toads equation, that is, the equation with $u(1-u)$ replaced by $u$.  The solutions to this equation are more easily computable analytically (see \Cref{sec:front_location}).  \Cref{prop:geometric} is a key tool in establishing this.  Further, it provides a Huygen's principle for the cane toads front; that is, our front moves normal to itself with velocity depending only on the normal vector and its position in $\theta$.
\smallskip

Consider $\mathcal{A}_{x,\theta,G_0,t} = \{\gamma \in H^1((0,t);\R\times \R^+) : \gamma(0) = (x,\theta), \gamma(t) \in \overline G_0\}$ and the action
\begin{equation}\label{eq:action}
\begin{split}
	J(x,\theta,t) := \min_{\mathcal{A}_{x,\theta,G_0,t}} &\int_0^t \left[\frac{\dot\gamma_1(s)^2}{4 \overline D(\gamma_2(s))} + \frac{\dot\gamma_2(s)^2}{4} - 1\right] ds.
\end{split}
\end{equation}
When $\overline D$ is not degenerate, it is well-known that $J$ satisfies a Hamilton-Jacobi equation similar to~\eqref{eq:I} in $\R\times\R^+\times\R^+$, see~\eqref{eq:J}.  In \Cref{sec:appendix}, we show that this can be extended to our setting by showing that the trajectories in~\eqref{eq:action} remain bounded away from $\theta=0$.  This follows from elementary, if fairly complicated, arguments in which we alter any trajectory that approaches the boundary $\R\times\{0\}$ to obtain a new trajectory that is ``more optimal.''  Also, due to the degeneracy in $\overline D$, $J$ satisfies Neumann boundary conditions (see \Cref{sec:J}).
%It follows from our analysis that $J \leq I$.
\begin{prop}\label{prop:I_J}
	Suppose that  \Cref{assumption:D} and \Cref{assumption:u_0} hold.  Then %for all $(x,\theta,t)$, $I(x,\theta,t) = \max\{0,J(x,\theta,t)\}$.  Consequently,
	\[
		\lim_{\epsilon\to0} u^\epsilon = \begin{cases}
			0 \quad \text{ uniformly on compact subsets of }  \quad \{J > 0\},\\
			1  \quad \text{ uniformly on compact subsets of } \quad  \{J < 0\}.
			\end{cases}
	\]
\end{prop}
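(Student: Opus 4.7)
The plan is to reduce the claim to \Cref{thm} by establishing the sign inclusions
\[
	\{J > 0\} \subset \{I > 0\} \quad\text{and}\quad \{J < 0\} \subset \Int\{I = 0\},
\]
which, together with \Cref{thm}, immediately yield the stated limits of $u^\epsilon$. The starting observation is that the Lagrangian appearing in \eqref{eq:action}, namely $L(\theta,\dot\gamma_1,\dot\gamma_2) = \dot\gamma_1^2/(4\overline D(\theta)) + \dot\gamma_2^2/4 - 1$, is precisely the velocity-Legendre transform of the Hamiltonian $H(\theta,p,q) = \overline D(\theta)|p|^2 + q^2 + 1$ appearing in \eqref{eq:I_intro}, while the terminal-time condition $\gamma(t)\in\overline G_0$ together with the state constraint $\gamma_2>0$ encode the initial data $I_0$ and the reflected boundary behavior at $\theta = 0$.

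First, I would invoke standard dynamic programming for state-constrained deterministic optimal control to identify $J$ as a continuous viscosity solution of
\[
	J_t + \overline D(\theta)|J_x|^2 + |J_\theta|^2 + 1 = 0 \quad\text{in}\quad \R\times(0,\infty)\times(0,\infty),
\]
with the same reflected-boundary inequalities on $\R\times\{0\}\times\R^+$ as in \eqref{eq:I_intro} and with $J(\cdot,\cdot,0) = I_0(\cdot,\cdot)$. Since the $\min$-structure of \eqref{eq:I_intro} makes $I$ a viscosity supersolution of this pure HJ equation with the same initial data, comparison for convex Hamilton--Jacobi equations gives $I \geq J$ throughout $\R\times\R^+\times\R^+$; combined with $I \geq 0$, this immediately yields the first inclusion $\{J > 0\} \subset \{I > 0\}$.

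For the second inclusion, I would work on the open set $\{I > 0\}$: there the obstacle is inactive, so $I$ solves the pure HJ equation, and the Lax--Oleinik representation along characteristics (which, starting from a point in $\{I > 0\}$ and run backward in time, must reach $t = 0$ inside $\overline G_0$ rather than exiting through a later slice of $\{I = 0\}$, since $I$ vanishes outside these sets) identifies $I$ with $J$ on $\{I > 0\}$. Consequently $J > 0$ wherever $I > 0$, so $\{J \leq 0\} \subset \{I = 0\}$; continuity of $J$ then upgrades this to $\{J < 0\} \subset \Int\{J \leq 0\} \subset \Int\{I = 0\}$. An application of \Cref{thm} finishes the proof.

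The main technical obstacle will be in the identification step: the degeneracy $\overline D(0) = 0$ causes the $x$-kinetic cost in $L$ to blow up as $\theta \to 0$, and one must verify carefully that the state constraint $\gamma_2 > 0$ in the definition of $J$ produces exactly the reflected boundary condition of \eqref{eq:I_intro}. Together with the Lax--Oleinik identification of $I$ with $J$ on $\{I > 0\}$, which requires care in tracking characteristics that may approach the boundary $\theta = 0$, this is the technical heart of the argument; the remaining comparison inclusions are then routine convex-Hamiltonian viscosity theory.
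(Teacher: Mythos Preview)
Your first inclusion $\{J>0\}\subset\{I>0\}$ is correct and matches the paper: once one checks (as in \Cref{sec:J}) that $J$ is a viscosity subsolution of \eqref{eq:I_intro} with the same initial data, comparison gives $J\le I$.

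The gap is in the second inclusion. You assert that on the open set $\{I>0\}$ the Lax--Oleinik representation identifies $I$ with $J$, because the backward characteristic ``must reach $t=0$ inside $\overline G_0$ rather than exiting through a later slice of $\{I=0\}$.'' This claim is precisely Freidlin's $N$-condition, and it is not automatic. On $\{I>0\}$ the obstacle is inactive, so $I$ solves the pure Hamilton--Jacobi equation there, but the relevant representation formula allows trajectories that exit through the free boundary $\partial\{I>0\}\cap\{t>0\}$, where $I=0$ serves as boundary data; nothing in your argument rules this out. Equivalently, $\max(J,0)$ is not obviously a viscosity supersolution of \eqref{eq:I_intro} at points of $\{J=0\}$, which is what you would need to run comparison in the other direction. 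The paper explicitly notes that it avoids Freidlin's path-based approach for this reason.

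The paper instead routes the reverse inclusion through the geometric front $w$ of \eqref{eq:w_intro}, and two elementary inequalities do all the work. First, the arithmetic--geometric mean inequality $2\sqrt{a}\le a+1$ makes $w$ a supersolution of the equation satisfied by $\tanh(I)$, whence $\{I>0\}\subset\{w>0\}$. Second, Cauchy--Schwarz applied to any minimizing trajectory in \eqref{eq:action} shows that $J(x,\theta,t)\le 0$ forces $\int_0^t N(\gamma,\dot\gamma)\,ds\le t$, i.e.\ $d((x,\theta),\overline G_0)\le t$, hence $w(x,\theta,t)=0$. Chaining gives $\{I>0\}\subset\{w>0\}\subset\{J>0\}$, which is exactly the inclusion you need, obtained without any analysis of characteristics near the free boundary.
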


We prove \Cref{prop:geometric} and \Cref{prop:I_J} simultaneously by showing that $\Int\{w = 1\} = \{J>0\} = \{I > 0\}$ and $\Int\{w = 0\} = \{J<0\} = \Int\{I = 0\}$ and applying \Cref{thm}.  Some of these inclusion follow by the maximum principle applied as in~\cite{MajdaSouganidis}.  To obtain the last inclusion $\Int\{w = 1\} \subset \{J > 0\}$, we use a characterization of $w$ in terms of trajectories that is analogous to~\eqref{eq:action}.  This last step differs from~\cite{MajdaSouganidis}, where the homogeneity of the equation considered there admits a more direct argument.
\smallskip

We point out that \Cref{prop:I_J} shows that the solutions are pulled.  In other words, the propagation speed depends only on the linearized equation at the highest order.  We also remark that there are examples where this is not the case; see \cite{MajdaSouganidis} for a discussion of this phenomena and for the construction of some counter-examples. %this is not  would not be the case if the original problem lead to an inhomogeneous problem in the limit; see \cite{MajdaSouganidis} for such a discussion. 
\smallskip

\begin{figure}
\begin{center}
\begin{overpic}[scale = .35]%[grid,tics=10]
		{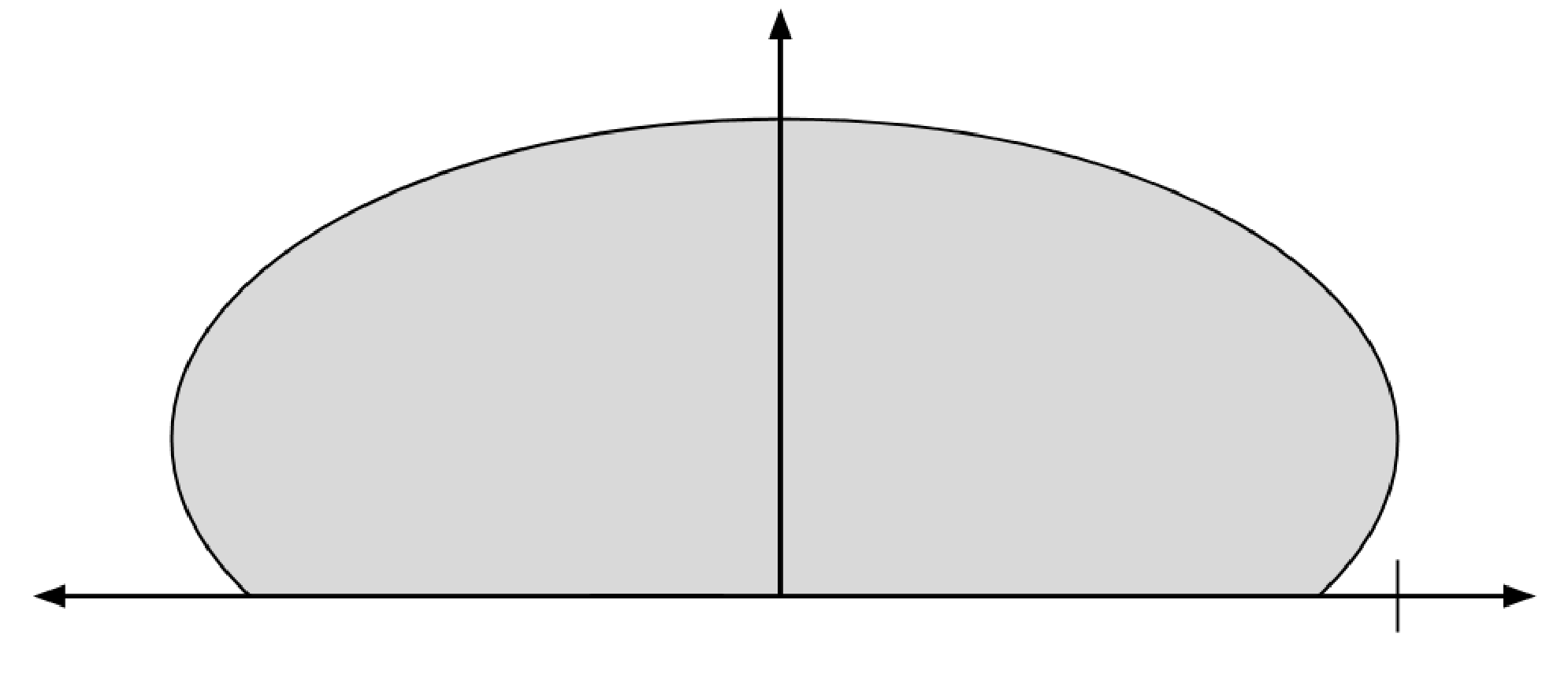}
	\put (79,-.5){Front location}
	\put (100,4){$x$}
	\put (49,43){$\theta$}	
	\put (58, 18){$u \sim 1$}
	\put (84,33) {$u \sim 0$}
\end{overpic}
\caption{A zoomed out cartoon of a typical solution to~\eqref{eq:cane_toads_unscaled} with $t\gg 1$.  The gray region is where $u \sim 1$, the white region is where $u \sim 0$ and the black boundary is where $u$ transitions from $1$ to $0$.  The ``front location'' is the furthest right point in $x$ where $u$ transitions between $1$ and $0$.}\label{fig:front_location}
\end{center}
\end{figure}

In fact, due to the connection between $u$, $I$, and $J$ given by \Cref{thm} and \Cref{prop:I_J}, the front location in physical space (see \Cref{fig:front_location}) for the problem~\eqref{eq:cane_toads_unscaled} when $\e=1$ can be obtained through the level set $\{J=0\}$.  Since $\overline D$ has the simplified form $\overline D(\theta) = \theta^p$ for some $p>0$, this level set can be explicitly computed analytically.  We discuss, in \Cref{sec:front_location}, how to compute and prove these asymptotics. %\footnote{By the front location, we mean, roughly, the furthest region to the right where $u(x,\theta,t) = 1/2$ for some $\theta$, see \cite[Section 1]{BHR_acceleration}.}
Indeed, in \Cref{sec:front_location}, we see that, for $t\gg 1$,
\begin{equation}\label{eq:front_location_formula}
	\text{Front location in $x$ at time $t$}
		\sim \Bigg(\frac{8}{2 + p} \frac{\Gamma\big(\frac{1}{2} + \frac{1}{p}\big)^{p/2}}{2^{1 - p/2} \pi^{p/4} \Gamma\big(1 + \frac{1}{p}\big)^{p/2}}\Bigg) t \sqrt{D(t)}.
\end{equation}
In particular we see that, while the order of the front location is determined by $D$, the coefficient in front depends only on the limiting problem~\eqref{eq:I}.
\smallskip

Returning to the example~\eqref{eq:D_example} and the solution to~\eqref{eq:cane_toads_unscaled} with this choice of $D$, we see that $\overline D(\theta) = \theta$.   Hence, $p=1$ in~\eqref{eq:front_location_formula} and we recover the front location $\sim (4/3) t^{3/2} \sqrt{\log(t)}$.

%Returning to the example~\eqref{eq:D_example}, we see that, if we ``undo'' the scaling, {\bf we have $u(x t\sqrt{D(t)}, \theta t, t) = u^{\e = t^{-1}}(x,\theta,1)$.  As $t\to \infty$ this converges to either $1$ or $0$ depending on whether, e.g. $(x,\theta,1) \in \{J <0\}$ or $\{J > 0\}$, respectively.  Hence,
%}
%\[
%	\text{location of the front at time } t \sim O(t \sqrt{D(t)}) \sim O(t^{3/2} \sqrt{\log(t)}).
%\]
%Further, since $\overline D(\theta) = \theta$, the constant above must the same\footnote{{\bf Indeed, the action $J$ defined in~\eqref{eq:action} is the same as the one analyzed in~\cite{BCMetal}.}} as the case when $D(\theta) = \theta$.  Hence, we find that
%\[
%	\text{location of the front at time } t \sim \frac{4}{3} t \sqrt{D(t)} \sim \frac{4}{3} t^{3/2} \sqrt{\log(t)}.
%\]
%Further, the front is located at $(4/3) t \sqrt{D(t)}$ for any $D$ such that $\overline D(\theta) = \theta$.  On the other hand, when this is not the case, it is difficult to compute the exact constant in front of $t \sqrt{D(t)}$ as the Euler-Lagrange equations for the minimizers of $J$ may not be explicitly solvable.  {\bf In general, as in this example, the {\em order} of the front location is $O(t \sqrt{D(t)})$, while the coefficient in front of $t \sqrt{D(t)}$ depends only on $\overline D$.  In view of this and the discussion immediately following \Cref{assumption:D}, computing the front dynamics~\eqref{eq:cane_toads_unscaled} reduces down to understanding the action $J$ or $w$ for only $\overline D(\theta) = \theta^p$ for $p>0$.}

\smallskip

%\subsubsection*{Related works}

The approach that we follow here is based on the work of Freidlin~\cite{Freidlin1,Freidlin2}, \cite{EvansSouganidis}, Barles, Evans, and Souganidis~\cite{BarlesEvansSouganidis}, and \cite{MajdaSouganidis}.  In the cane toads equation introduced by Benichou et.~al.~\cite{BenichouEtAl} $u(1-u)$ is replaced by the non-local term $u(1-\int u d\theta)$ and $D(\theta) = \theta$.  In that setting and with the additional assumption that the trait $\theta$ takes values between two fixed positive constants $[\underline\theta,\overline\theta]$, Bouin and Calvez \cite{BouinCalvez} proved the existence of traveling waves, Turanova \cite{Turanova} showed  that the speed of the traveling wave governs the spread of the population in the Cauchy problem, and Bouin, Henderson, and Ryzhik~\cite{BHR_delay} established  a Bramson-type  logarithmic delay between the speed of the slowest traveling wave and the location of the front for any initially ``localized'' solution to the Cauchy problem.  When the trait space is unbounded, as in this work, Bouin et.~al.~\cite{BCMetal} predicted that the location of the front is of order $(4/3)t^{3/2}$.  This was then verified in the local model by Berestycki, Mouhot, and Raoul \cite{BerestyckiMouhotRaoul} and by Bouin, Henderson, and Ryzhik \cite{BHR_acceleration}  using probabilistic and analytic techniques, respectively.  It was also shown in  \cite{BerestyckiMouhotRaoul}  that in a windowed non-local model the propagation speed is the same, while \cite{BHR_acceleration} obtained weak bounds of order $t^{3/2}$ for the full non-local model.  A model with a trade-off term, that is, a penalization for large of traits, has been proposed and studied by Bouin, Chan, Henderson, and Kim~\cite{BCHK}.  In the present article, we investigate only the local model as the non-local model  exhibits quite different behavior, see \cite{CalvezHendersonMirrahimiTuranova}. 
We also mention related works on finite domains by Perthame and Souganidis~\cite{PerthameSouganidis} and Lam and Lou~\cite{LamLou}.

\subsubsection*{Outline of the paper}  We begin by proving \Cref{thm} in \Cref{sec:thm} assuming \Cref{prop:phi_convergence}.  In \Cref{sec:limits}, we prove \Cref{prop:phi_convergence} using the half-relaxed limits along with uniqueness of the limiting Hamilton-Jacobi equations.  New ingredients in this step are the a priori estimates, which are more difficult to obtain since the Hamiltonian is degenerate at $\theta=0$ and unbounded at $\theta = +\infty$, and the boundary conditions, since boundaries did not appear in earlier thin-front limit works.  In \Cref{sec:relationship}, we prove \Cref{prop:geometric} and \Cref{prop:I_J}, that is, the propagation of $u$ is characterized by the solution to the geometric front equation, $w$, and the solution to the linearized problem, $J$.  Again, the boundary conditions provide the main difficulties in this section.  We include brief comments in \Cref{sec:appendix} describing why we may import the representation formulas for $w$ and $J$ from the boundary-less setting.  We conclude the paper with a discussion and computation of the front location in \Cref{sec:front_location}.

\subsubsection*{Notion of solution}
Throughout this work, we employ the concept of viscosity solutions, along with the vocabulary accompanying it.  The interested reader should consult one of the many references, for example, the ``User's Guide'' of Crandall, Ishii, and Lions~\cite{CrandallIshiiLions}.

\subsubsection*{Acknowledgments}
CH was partially supported by the National Science Foundation Research Training Group grant DMS-1246999.
BP has been supported by the French ``ANR blanche" project Kibord:  ANR-13-BS01-0004.
PS was partially supported by the National Science Foundation grants DMS-1266383 and DMS-1600129 and the Office for Naval Research Grant N00014-17-1-2095.

\section{The proof of \Cref{thm}}\label{sec:thm}

%Our proof consists of two lemmas.  The first asserts that the functions $v^\epsilon$ converge to the unique solution of a Hamilton-Jacobi equation.
%\begin{lem}\label{lem:limits}
%	Assume that \Cref{assumption:D,assumption:u_0} hold.  Then we have that $v^\epsilon$ converges locally uniformly to $I$, the solution of 
%	\begin{equation}\label{eq:I}
%	\begin{cases}
%		\min\left\{ I_t + \overline D |I_x|^2 + |I_\theta|^2 + 1, I\right\} = 0, ~~~&\text{ in } \R \times (0,\infty)\times(0,\infty),\\
%		\max\left\{ - I_\theta, \min\left\{I_t + |I_\theta|^2 + 1, I\right\}\right\} \geq 0, ~~~&\text{ on }  \R\times \{0\}\times (0,\infty),\\
%		\min\left\{ - I_\theta, \min\left\{I_t + |I_\theta|^2 + 1, I\right\}\right\} \leq 0, ~~~&\text{ on }  \R\times \{0\}\times (0,\infty),
%	\end{cases}
%\end{equation}
%	which satisfies $I(x,\theta,0) = \infty$ on $\overline G_0^c$ and $I(x,\theta,0) = 0$ on $G_0$.
%\end{lem}
%In addition, we need to show that $I$ and $J$ are related in a specific way.
%\begin{lem}\label{prop:I_J}{\bf WE NEED TO MOVE THIS SOMEWHERE ELSE}
%	Assume that \Cref{assumption:D,assumption:u_0} hold.  Then we have that $\{I > 0\} = \{J > 0\}$ and that $\Int\{I = 0\} = \{J < 0\}$.
%\end{lem}
%We prove \Cref{lem:limits} in \Cref{sec:limits} and \Cref{prop:I_J} in \Cref{sec:I_J}.

The proof hinges on the locally uniform convergence of $v^\epsilon$ to $I$ guaranteed by \Cref{prop:phi_convergence}.  We show how to conclude \Cref{thm} assuming this proposition, which is proved in \Cref{sec:limits}.   Our proof follows the general outline of~\cite{EvansSouganidis}, with the relevant modifications made to handle the technical issues arising from the boundary.

\begin{proof}[Proof of \Cref{thm}]
	 \sloppy We first consider the set $\{I > 0\}$.  Fix any point $(x_0,\theta_0,t_0)$ such that $I(x_0,\theta_0,t_0) > 0$ with $t_0 > 0$.  %By \Cref{prop:I_J}, we have that $I(x_0,\theta_0,t_0)>0$ as well.
Since $v^\epsilon$ converges to $I$ locally uniformly as $\e$ tends to zero by~\Cref{prop:phi_convergence},  $v^\epsilon(x,\theta,t) > \delta$ for some $\delta,r>0$ and any $(x,\theta,t) \in B_r(x_0,\theta_0,t_0)$ when $\e$ is sufficiently small.  It follows that  $u^\epsilon(x,\theta,t) \leq \exp\left\{ -\delta/\epsilon\right\}$ for all $\epsilon$ sufficiently small and all $(x,\theta,t) \in B_r(x_0,\theta_0,t_0)$.  Hence $u^\epsilon$ converges to zero uniformly on $B_r(x_0,\theta_0,t_0)$ as $\epsilon$ tends to zero.
\smallskip

	Now we consider the set $\Int\{I = 0\}$.  Fix $(x_0,\theta_0,t_0)\in \Int\{I = 0\}$.  There are two cases to investigate depending on whether $\theta_0$ is positive or zero.  First assume that $\theta_0>0$. %we have that $(x_0,\theta_0,t_0) \in \Int\{I = 0\}$, by \Cref{prop:I_J}.  We 
Define a test function $\psi(x,\theta,t) = |t-t_0|^2 + |x-x_0|^2 + |\theta-\theta_0|^2,$
and note that, since $I \equiv 0$ near $(x_0,\theta_0,t_0)$,  $I - \psi$ has a strict local maximum at $(x_0,\theta_0,t_0)$ on a small enough ball centered at $(x_0,\theta_0,t_0)$.   It follows that $v^\epsilon - \psi$ has a maximum at some point $(x_\epsilon, \theta_\epsilon, t_\epsilon)$ such that $(x_\epsilon,\theta_\epsilon,t_\epsilon)$ converges to $(x_0,\theta_0,t_0)$ as $\epsilon$ tends to zero.  %Since $v^\epsilon$ converges locally uniformly to $\psi$,  it follows that $(x_\epsilon, \theta_\epsilon, t_\epsilon) \to (x_0,\theta_0,t_0)$.  In particular, we may restrict to $\epsilon$ sufficiently small so that $t_\epsilon > 0$.
\smallskip	
	
	Because $\theta_0 > 0$, we may restrict to $\epsilon$ sufficiently small so that $\theta_\epsilon > 0$.  Then, using~\eqref{eq:phase}, we find, at $(x_\e,\theta_\e,t_\e)$,
$\psi_t - \epsilon \overline D^\epsilon  \psi_{xx} - \epsilon \psi_{\theta\theta} + \overline D^\epsilon |\psi_x|^2 + |\psi_\theta|^2 \leq  u^\epsilon - 1.$
	An explicit computation, using only the form of $\psi$ and the fact that $(x_\e,\theta_\e,t_\e)$ converges to $(x_0,\theta_0,t_0)$ as $\e$ tends to zero, shows that the left hand side tends to zero as $\epsilon$ tends to zero.  We infer that $1 \leq \liminf_{\e\to0} u^\epsilon(x_\epsilon,\theta_\epsilon,t_\epsilon)$.  On the other hand, recall that $( x_\epsilon,\theta_\epsilon, t_\epsilon)$ is the location of a minimum of $u^\epsilon \exp\{\psi/\epsilon\}$.  Hence we have that
	\[
		\liminf_{\epsilon\to0}u^\epsilon(x_0,\theta_0,t_0)
			\geq \liminf_{\epsilon\to0} u^\epsilon(x_\epsilon,\theta_\epsilon,t_\epsilon)\exp\left\{\e^{-1}\left(|t_\epsilon - t_0|^2 + |x_\epsilon-x_0|^2 + |\theta_\epsilon-\theta_0|^2\right)\right\}
			\geq 1.
	\]
	Initially, $u_0 \leq 1$ in $\R\times [0,\infty)$. The maximum principle implies that $u^\epsilon \leq 1$ in $\R\times[0,\infty)\times[0,\infty)$ for all $\epsilon$. It follows that $1 \geq \limsup_{\e\to0}u^\e(x_0,\theta_0,t_0)$.  As a consequence, $\limsup_{\e\to0}u^\e(x_0,\theta_0,t_0) = \liminf_{\e\to0} u^\e(x_0,\theta_0,t_0) = 1$, which implies that $u^\e(x_0,\theta_0,t_0)$ converges to $1$ as $\e$ tends to zero.  This concludes the proof in the case that $\theta_0 = 0$.
\smallskip
	
	If $\theta_0 = 0$,  define
$\psi^\epsilon(x,\theta,t) := |t-t_0|^2 + |x-x_0|^2 + |\theta-\epsilon^2|^2,$
	%and fix $(x_\epsilon,\theta_\epsilon,t_\epsilon)$ as above.
	and let $(x_\epsilon,\theta_\epsilon,t_\epsilon)$ be a maximum of $v^\epsilon - \psi^\epsilon$.  Since $\psi^\epsilon$ and $v^\e$ converge to $\psi$ and $I$, respectively, as $\e$ tends to zero and $I - \psi$ has a strict local maximum at $(x_0,\theta_0,t_0)$, it follows that  $(x_\epsilon, \theta_\epsilon,t_\epsilon)$ converges to $(x_0, 0,t_0)$ as $\e$ tends to zero.  
	\smallskip

	We claim that $\theta_\e > 0$ for all $\e>0$, and we proceed by contradiction.  Suppose that $\theta_\epsilon = 0$ for any $\epsilon>0$.  Because $v^\epsilon - \psi$ has a local maximum at $(x_\epsilon, 0, t_\epsilon)$,     
$v_\theta^\epsilon(x_\epsilon, 0,t_\epsilon) \leq \psi_\theta^\epsilon(x_\epsilon, 0,t_\epsilon).$
By~\eqref{eq:phase},  the left hand side is $0$. The right hand side  is, by construction,  $-2\epsilon^2$.  This is a contradiction.
	\smallskip
	
	It follows that $\theta_\epsilon > 0$ for all $\e>0$. % for $\epsilon$ sufficiently small.
	Then \eqref{eq:phase} yields, at $(x_\epsilon,\theta_\epsilon,t_\epsilon)$,
	\[
		%o(1) = 
		\psi_t^\epsilon - \epsilon \overline D^\epsilon(\theta) \psi_{xx}^\epsilon - \epsilon\psi_{\theta\theta}^\epsilon + \overline D^\epsilon(\theta)|\psi_x^\epsilon|^2 + |\psi_\theta^\epsilon|^2 \leq u^\epsilon -1.
	\]
	As above, an explicit computation shows that the left hand side tends to zero as $\e$ tends to zero.  Hence, $\liminf u^\epsilon(x_\epsilon,\theta_\epsilon,t_\epsilon) \geq 1$. 
	\smallskip
	
	By construction, $(x_\epsilon,\theta_\epsilon,t_\epsilon)$ is the location of a local minimum of $u^\e\exp\{\psi^\e/\e\}$.  Thus,
	\[
		\liminf u^\epsilon(x_0,0,t_0)
			\geq \liminf u^\epsilon(x_\epsilon,\theta_\epsilon,t_\epsilon) \exp\left\{ \frac{|t_\epsilon - t_0|^2 + |x_\epsilon-x_0|^2 + |\theta_\epsilon - \epsilon^2|^2}{\epsilon} - \epsilon \right\}
				\geq 1.
	\]
From the conclusion of the previous case, when $\theta_0>0$, recall that $u^\e \leq 1$, which immediately yields $\limsup_{\e\to0} u^\e(x_0,\theta_0,t_0)\leq 1$.  Arguing as above, we conclude that $u^\e(x_0,0,t_0)$ converges to $1$ as $\e$ tends to zero.  This concludes the proof.
%which, together with the fact that $u^\epsilon \leq 1$, {\bf which follows by the maximum principle along with the fact that $u_0 \leq 1$,}   proves the claim. %	Again, we are finished by noticing that $u^\epsilon \leq 1$.
\end{proof}

\section{The limit of the sequence $(v^\epsilon)_{\e>0}$ -- the proof of \Cref{prop:phi_convergence}}\label{sec:limits}

We proceed in three steps.  In the first, we obtain uniform bounds on $v^\epsilon$ on compact subsets of $(G_0 \times \{t=0\}) \cup ( \R\times [0,\infty) \times \R^+)$.  In the second, we take the half-relaxed limits of the sequence $(v^\epsilon)_{\e>0}$ to obtain $v_*$ and $v^*$, and we show that they are respectively super- and sub-solutions of~\eqref{eq:I}.  Finally in the last step, we use comparison to show that $v_* = v^* = I$ and conclude that $v^\epsilon$ converges locally uniformly to $I$.

\subsection{An upper bound for $v^\epsilon$}\label{sec:apriori}

By the maximum principle, $0 \leq u^\epsilon \leq 1$ and so $v^\epsilon \geq 0$.  In order to take the half-relaxed limits, we need an upper bound on $v^\epsilon$ that is uniform in $\e$.

\begin{lem}\label{lem:apriori}
	Suppose that \Cref{assumption:D} and \Cref{assumption:u_0} hold.  Fix any compact subset  $Q$ of 
$
	\left(G_0 \times \{t = 0\}\right) \cup \left(\R\times[0,\infty)\times\R^+\right).
$
There exists $C=C(Q)>0$ such that, if $(x,\theta,t) \in Q$, then $v^\epsilon(x,\theta,t)
		\leq C.$
Further, if $Q \subset G_0 \times [0,\infty)$, then there exists a constant $C' = C'(Q)$ such that, if $(x,\theta,t)\in Q$, then
\begin{equation}\label{eq:small_on_G_0}
	v^\e(x,\theta,t) \leq \e \ C'.
\end{equation}
\end{lem}
\begin{proof}
We begin by noticing that, when $\epsilon > 0$,   we may ignore the boundary $\{\theta = 0\}.$  Indeed, using the Neumann boundary condition, we may extend $u^\epsilon$, and thus $v^\epsilon$, evenly to $\R\times\R\times \R^+$.  The parabolic regularity theory yields that $v^\epsilon$ satisfies~\eqref{eq:phase} on $\R\times\R\times\R^+$ with  $\overline D^\epsilon(\theta)$ replaced by $\overline D^\epsilon(|\theta|)$;  for more details see~\cite{Turanova}.  For the remainder of this proof, we abuse notation by letting $u^\epsilon$ and $v^\epsilon$ refer to their even extensions.
\smallskip

Next, we set some notation.  For any $R>0$ and $(x_0,\theta_0) \in \R\times \R$, let
\[
	Q_R(x_0,\theta_0) :=
		(x_0 - R, x_0 + R)\times (\theta_0 - R, \theta_0 + R).
\]
In the sequel, we use $Q_R$ to refer to $Q_R(0,0)$.
\smallskip

We proceed in two steps.  First, for any $T,R>0$ and $(x_0,\theta_0)$ such that $|\theta_0|>R/2$ and $\overline Q_R(x_0,\theta_0) \subset G_0$, we build a barrier on $Q_R(x_0,\theta_0)\times[0,T]$ that yields an upper bound on $v^\e$ in $Q_{3R/4}(x_0,\theta_0) \times [0,T]$ that is uniform in $\e$.  Since $G_0$ is open, it is easy to see that
\[
	G_0 = \bigcup_{R\in (0,1)} \bigcup_{\substack{|\theta_0| > R/2, \\ \overline{Q_R}(x_0,\theta_0) \subset G_0}} Q_{3R/4}(x_0,\theta_0).
\]
Thus, the bound we have is enough to conclude an upper bound on $v^\e$ that is independent of $\e$ on any compact subset of $G_0 \times [0,T]$.  The second step extends this by building a barrier on sets of the form $(Q_L(x_0,\theta_0)\setminus \overline{Q_{R/2}}(x_0,\theta_0)) \times [T^{-1},T^{-1} + T]$, where  $R$, $x_0$, and $\theta_0$ are as in the first step, $T>1$, and $L>R$. This crucially uses the bound obtained in the first step to control the portion of the parabolic boundary $\partial Q_{R/2}\times(T^{-1},T^{-1}+T)$.  This provides an upper bound on $v^\e$ that is independent of $\e$ on compact subsets of $\R \times \R \times \R^+$, which finishes the proof.
\smallskip

Our proof follows the ideas of~\cite{EvansSouganidis} with a few key modifications, which we mention as they arise.  The added complication that occurs in our proof is due to the interplay of the degeneracy of $\overline D^\e$ at $\theta=0$ and its growth at $\theta = \infty$.  We point out that a crucial observation that saves our computations is restricting to cubes $Q_R(x_0,\theta_0)$ where $|\theta_0|>R/2$, see Step Two below.
%First, we construct one for the set $G_0 \times [0,\infty]$, which shows that we can propagate the initial bounds.  Since the initial data is not bounded as $\e\to0$ on $G_0^c$, we next contruct barriers on compact subsets of $\R\times [0,\infty)\times \R^+$, utilizing the preliminary bound obtained in the first step to control the ordering of the barrier and $v^\e$ on the boundary in space and trait.  We follow the ideas of~\cite{EvansSouganidis} with a few key modifications, which we mention as they occur.
\smallskip

{\bf \# Step one:} 
%Fix $R>0$ and any point $(x_0,\theta)$ such that $\overline{Q_R}(x_0,\theta_0) \subset G_0 =\{u_0>0\}$ and $\theta_0 > R/2$, where
%\[
%	Q_R(x_0,\theta_0) :=
%		(x_0 - R, x_0 + R)\times (\theta_0 - R, \theta_0 + R).
%\]
%Since $G_0$ is open, notice that
%\[
%	G_0 = \bigcup_{R\in (0,1)} \bigcup_{\substack{\theta_0 > R/2, \\ \overline{Q_R}(x_0,\theta_0) \subset G_0}} Q_R(x_0,\theta_0).
%\]
%Hence, it is enough to obtain bounds on sets of the form $Q_R(x_0,\theta_0)\times [0, \infty)$ to conclude a locally uniform bound on $G_0 \times [0,\infty)$.  In the sequel, we use $Q_R$ to refer to $Q_R(0,0)$.
%\smallskip
Since the equation is translation invariant in $x$, we may assume that $x_0 = 0$, without loss of generality.  We may also assume, without loss of generality, that $\theta_0 > 0$, which, in turn, implies that $\theta_0 > R/2$.  For notational ease, we translate the equation in $\theta$.  That is, we define $\nu(x,\theta,t) = v^\e(x, \theta+\theta_0, t)$ and $\overline D^\e_0(\theta) = \overline D^\e(\theta+\theta_0)$.  It follows  that,
\begin{equation}\label{eq:shifted_phase}
	\nu_t - \e \overline D^\e_0 \nu_{xx} - \e \nu_{\theta\theta} + \overline D^\e |\nu_x|^2 + |\overline \nu_\theta|^2 + 1 - e^{-\nu/\e} = 0
		\qquad \text{  in $\R \times \R\times \R^+$}.
\end{equation}
An upper bound on $\nu$ in $Q_R$ implies the desired bound on $v^\e$ in $Q_R(0,\theta_0)$.
\smallskip

We proceed by building a barrier.  Consider, for $\alpha$, $\beta$, and $\rho$ that are positive constants to be determined, the auxiliary function
\[
	\psi(x,\theta,t) := \alpha t + \beta + \frac{\rho}{R^2 - x^2} + \frac{\rho}{R^2 - \theta^2}
		\qquad \text{ in } Q_R\times \R^+.
\]
We point out that $\psi$ differs from the barrier used in~\cite{EvansSouganidis}, and this difference simplifies many computations because it separates the variables.
\smallskip

Straightforward calculations yield
\begin{equation}\label{eq:super-solution_a}
\begin{split}
	&\psi_t - \e \overline D^\e_0 \psi_{xx} - \e \psi_{\theta\theta} + \overline D^\e_0 |\psi_x|^2 + |\psi_\theta|^2 + 1 - e^{-\psi/\e}\\
		&\geq \alpha  - \e \rho \left( \overline D^\e_0 \left(\frac{2}{(R^2- x^2)^2} + \frac{8 x^2}{(R^2-x^2)^3}\right) + \left(\frac{2}{(R^2- \theta^2)^2} + \frac{8 \theta^2}{(R^2-\theta^2)^3} \right) \right)\\
		&\qquad	+ 4\rho^2 \left(\overline D^\e_0 \frac{x^2}{(R^2 - x^2)^4} + \frac{\theta^2}{(R^2 - \theta^2)^4}\right)\\
		&= \alpha  + \frac{2\rho \overline D^\e_0}{(R^2 - x^2)^3} \left( \frac{2 \rho x^2}{R^2-x^2} - \e(R^2 + 3x^2)\right)
%		&\qquad
			+ \frac{2\rho}{(R^2 - \theta^2)^3} \left(\frac{2 \rho \theta^2}{R^2 - \theta^2} - \e(R^2 + 3\theta^2) \right).
\end{split}
\end{equation}
We define
\begin{equation}\label{eq:alpha_rho}
\begin{split}
	&\rho := 6\e R^2,~~
	\beta := \displaystyle\max_{Q_R} \nu(x,\theta,0)
		= \displaystyle\max_{Q_R(0,\theta_0)} v^\e_0(x,\theta), ~~
	\text{and}~~
	\alpha := \frac{ 20 \e \rho \displaystyle\max_{|\theta'|\leq R}( 1+\overline D^\e_0(\theta'))}{R^4}.
\end{split}
\end{equation}
\smallskip

Consider the second term in the last line of~\eqref{eq:super-solution_a}.  When $|x| \in [R/2,R]$, we have
\[\begin{split}
	\frac{2 \rho x^2}{R^2-x^2} - &\e\left(R^2 + 3x^2\right)
		\geq \frac{2 \rho (R/2)^2}{R^2 - (R/2)^2} - \e \left(R^2 + 3R^2\right)
		= \frac{2\rho}{3} - 4 \e R^2
		= 0,
\end{split}\]
where the last equality follows from the definition of $\rho$~\eqref{eq:alpha_rho}.  When $|x| \in [0,R/2]$, we have
\[\begin{split}
	\frac{2\rho \overline D^\e_0}{(R^2 - x^2)^3} \left( \frac{2 \rho x^2}{R^2-x^2} - \e(R^2 + 3x^2)\right)
		&\geq  \frac{2\rho \displaystyle \max_{|\theta'| \leq R} \overline D^\e_0}{(R^2 - (R/2)^2)^3} ( 0 - 2\e R^2)
		%= - \frac{ 256 \e \rho \max_{|\theta| \leq R} \overline D^\e_0}{ 27 R^4}
		\geq - \frac{\alpha}{2}.
\end{split}\]
We see that, for all $(x,\theta) \in Q_R$,
\[
	\frac\alpha2  + \frac{2\rho \overline D^\e_0}{(R^2 - x^2)^3} \left( \frac{2 \rho x^2}{R^2-x^2} - \e(R^2 + 3x^2)\right)
		\geq 0.
\]
A similar argument shows that, for all $(x,\theta) \in Q_R$,
\[
	\frac\alpha2 + \frac{2\rho}{(R^2 - \theta^2)^3} \left(\frac{2 \rho \theta^2}{R^2 - \theta^2} - \e(R^2 + 3\theta^2) \right)
		\geq 0.
\]
These two inequalities, applied to~\eqref{eq:super-solution_a}, show that
\[
	\psi_t - \e \overline D^\e_0 \psi_{xx}  - \psi_{\theta\theta} + \overline D^\e_0 |\psi_x|^2 + |\psi_{\theta}|^2 + 1 - e^{-\psi/\e} 
		\geq 0
	\qquad \text{ in } Q_R\times \R^+;
\]
that is, $\psi$ is a super-solution of~\eqref{eq:shifted_phase} in $Q_R\times \R^+$.
\smallskip

Next, the choice of $\beta$ ensures that, on $Q_R$, ${\psi(\cdot,\cdot,0)} \geq \beta \geq v^\epsilon_0.$
Further, the strong maximum principle implies that $u^\e > 0$ on $\R\times \R\times \R^+$, which implies that $v^\e$, and thus $\nu$, is finite in $\R\times\R\times \R^+$.  Because $\psi = + \infty$ on $\partial Q_R\times \R^+$,  $\psi \geq \nu$ on $\partial Q_R\times \R^+$.  The maximum principle implies that $0 \leq \nu \leq \psi$ on $Q_R \times\R^+$.  In particular, there exists some  $C_R>0$, which depends only on $\theta_0$, $R$, $D$, and $u_0$,  such that, on $Q_{3R/4} \times [0,\infty)$,
\begin{equation}\label{eq:preliminary_bound}
	v^\epsilon \leq \psi \leq C_R(1+t). 
\end{equation}

We now establish~\eqref{eq:small_on_G_0}. Since $\overline{Q_R}(0,\theta_0) \subset G_0 = \{u_0 > 0\}$, then
 \[
 	\beta = \max_{(x,\theta) \in \overline{Q_R}(0,\theta_0)} v^\epsilon_0 \leq \epsilon \log\left(\frac{1}{\min_{(x,\theta) \in \overline{Q_R}(0,\theta_0)} u_0(x,\theta)}\right).
\]
We recall that $\min_{(x,\theta) \in \overline {Q_R}(0,\theta_0)}
 u_0 > 0$ due to the continuity of $u_0$. 
Also, it follows from their definitions that $\alpha, \rho \leq C \epsilon$, for some constant $C$ depending only on $D$, $R$, and $\theta_0$.  We conclude that, for any $(x_0,\theta_0)$ and $R,T>0$ such that $\overline{Q_R}(x_0,\theta_0) \subset G_0$ and $|\theta_0| > R/2$, there exists a constant $C$ that depends only on $u_0$, $R$, $D$, $(x_0,\theta_0)$, and $T$ such that
$\nu \leq C\e$ in $Q_{3R/4}(x_0,\theta_0) \times [0,T].$
Given a compact subset $Q \subset G_0 \times[0,\infty)$, it can be covered by finitely many sets of the form $Q_{3R/4}(x_0,\theta_0) \times [0,T]$ where $\overline{Q_R}(x_0,\theta_0) \subset G_0$ and $|\theta_0|>R/2$.  Hence, we conclude that, for any such $Q$, there exists $C = C(Q)$ such that $\nu \leq C\e$ on $Q$;
that is,~\eqref{eq:small_on_G_0} holds.
\smallskip

{\bf \# Step two:} Let $R$ and $\theta_0$ be as above and fix $L> R$ and $T>1$.
Define $\nu(x,\theta, t) = v^\e(x,\theta+\theta_0, t + T^{-1})$.  Then, $\nu$ satisfies~\eqref{eq:shifted_phase}.  In view of the bound~\eqref{eq:preliminary_bound}, a bound on $\nu$ in $Q_L\setminus \overline Q_{R/2} \times [0,T]$, yields a bound on $v^\e$ on $Q_L(0,\theta_0)\times [T^{-1},T^{-1} + T]$.  To obtain such a bound, we build a barrier.  Before beginning, we note that, in~\cite{EvansSouganidis}, the authors are able to construct a barrier on their analogue of $\overline{Q_{R/2}}^c \times \R^+$ directly.  This approach will not work in our setting since $\overline D^\e_0$ is unbounded.  This is the reason that we strict to cubic annuli in physical and trait space.
\smallskip

Define
\begin{equation}\label{eq:beta_rho}
\begin{split}
	&\beta := \max_{\partial Q_{R/2} \times [0,T]} \nu
		= \max_{\partial Q_{R/2}(0,\theta_0) \times \left[\frac1T, \frac1T+T\right]} v^\e
	~~\text{and}~~
	\rho := \frac{2}{R^2}\frac{4 \e T L^4 (1 + \displaystyle\max_{|\theta'|\leq L} \overline D^\e_0(\theta')) + L^8}{\displaystyle\min\left\{1,\min_{|\theta'|\leq R/2}\overline D^\e_0(\theta')\right\}}.
\end{split}
\end{equation}
Since $\theta_0 > R/2$, it follows that the denominator of $\rho$ is bounded below by a positive constant independent of $\e$.  Also, $\beta$ is bounded above depending only on $u_0$, $\theta_0$, $R$, and $T$, due to~\eqref{eq:preliminary_bound}. Then define
\[
	\zeta\left(x,\theta, t + \frac{1}{T}\right)
		:= \beta + \frac{\rho}{t(L^2 - x^2)} + \frac{\rho}{t(L^2 - \theta^2)}
			\quad \text{ in } Q_L \setminus \overline Q_{R/2} \times [0,T].
\]
Note that the restriction $\theta_0 > R/2$ has the consequence that when $\overline D^\e_0 \sim 0$, $|\zeta_\theta| \sim O(1)$.  This is the key observation in constructing $\zeta$ that allows us to side-step any complications stemming from the degeneracy $\overline D^\e_0(-\theta_0) = \overline D^\e(0) = 0$.
\smallskip

Also, note that this barrier is different from the one constructed in~\cite{EvansSouganidis}.  Indeed, since we are restricted to a compact set in physical and trait space, it is crucial that $\zeta$ be larger than $\nu$ on the boundary $\partial Q_L$.  Hence, we may not use the quadratically growing barrier from~\cite{EvansSouganidis}.
\smallskip

We show that $\zeta$ is super-solution of~\eqref{eq:shifted_phase}.  A straightforward computation yields
\begin{equation}\label{eq:c319}
\begin{split}
	\zeta_t - &\e\overline D_0^\e \zeta_{xx} - \e\zeta_{\theta\theta} + \overline D^\e_0 |\zeta_x|^2 + |\zeta_\theta|^2 + 1 - e^{-\zeta/\e}\\
		&\geq   - \frac{\rho}{t^2(L^2 - x^2)} - \frac{\rho}{t^2(L^2 - \theta^2)}
			- \e\rho\overline D_0^\e \left( \frac{2}{t(L^2 - x^2)^2} + \frac{8 x^2}{t(L^2 - x^2)^3} \right)\\
		&\qquad - \e\rho \left( \frac{2}{t(L^4 - \theta^4)^2} + \frac{8 \theta^2}{t(L^2 - x^2)^3} \right)
			+ \overline D_0^\e \frac{4 \rho^2 x^2}{t^2(L^2 - x^2)^4}
			+ \frac{4 \rho^2 \theta^2}{t^2(L^2 - \theta^2)^4}\\
%		&=   - \frac{\rho}{t^2}\left(\frac{1}{L^2 - x^2} + \frac{1}{L^2 - \theta^2}\right)
%			+ \frac{2\rho \overline D_0^\e}{t^2 (L^2 - x^2)^3}\left(\frac{2 \rho x^2}{L^2 - x^2} - \e t (L^2 +  3x^2)\right)\\
%			&\qquad + \frac{2\rho}{t^2 (L^2 - \theta^2)^3}\left(\frac{2 \rho \theta^2}{L^2 - \theta^2} - \e t (L^2 + 3\theta^2)\right)\\
		&=  \frac{2\rho}{t^2}\Bigg[ \frac{1}{(L^2 - x^2)^4}\left(2 \rho x^2  \overline D^\e_0  - \e  \overline D^\e_0 t(L^2 + 3 x^2)(L^2-x^2) - \frac{(L^2-x^2)^4}{2} \right)\\
			&\qquad  + \frac{1}{(L^2 - \theta^2)^4}\left(2 \rho \theta^2 - \e t(L^2 + 3 \theta^2)(L^2-x^2) - \frac{(L^2-\theta^2)^4}{2}\right) \Bigg].
\end{split}
\end{equation}
Since $(x,\theta) \in Q_L\setminus \overline Q_{R/2}$, we consider three cases: (1) $|x| > R/2 \geq |\theta|$; (2) $|\theta| >R/2 \geq |x|$; and (3) $|x|, |\theta| > R/2$.
\smallskip

{\bf Case one:} If $|x| > R/2 \geq |\theta|$, notice that
\begin{equation}\label{eq:c319a}
\begin{split}
	&\frac{1}{(L^2 - x^2)^4}\left(2 \rho x^2  \overline D^\e_0  - \e \overline D^\e_0 t(L^2 + 3 x^2)(L^2-x^2) - \frac{(L^2-x^2)^4}{2}\right) \\
		&\quad\geq \frac{1}{(L^2 - x^2)^4}\left( \frac{\rho R^2}{2}  \min_{|\theta'|\leq R/2}\overline D^\e_0(\theta')  - 4 \e T L^4 \max_{|\theta'| \leq L}\overline D^\e_0(\theta') - \frac{L^8}{2}\right)\\
		&\quad \geq \frac{1}{(L^2 - x^2)^4} \left( 4 \e T L^4 + \frac{L^8}{2}\right)
		\geq \frac{1}{(L^2 - (R/2)^2)^4} \left( 4 \e T L^4 + \frac{L^8}{2}\right),
\end{split}
\end{equation}
where we used the definition of $\rho$~\eqref{eq:beta_rho} in the second-to-last inequality.  On the other hand,
\[\begin{split}
	\frac{1}{(L^2 - \theta^2)^4}&\left(2 \rho \theta^2  - \e t(L^2 + 3 \theta^2)(L^2-\theta^2) - \frac{(L^2-\theta^2)^4}{2} \right)
	\geq \frac{-1}{(L^2 - (R/2)^2)^4} \left(4\e TL^4 + \frac{L^8}{2}\right).
\end{split}\]
Summing these two inequalities and recalling~\eqref{eq:c319} yields
\[
	\zeta_t - \e\overline D_0^\e \zeta_{xx} - \e\zeta_{\theta\theta} + \overline D^\e_0 |\zeta_x|^2 + |\zeta_\theta|^2 + 1 - e^{-\zeta/\e}
		\geq 0
\]
when $|x| >R/2$ and $|\theta| \leq R/2$.
\smallskip

{\bf Case two:} If $|\theta|>R/2 \geq |x|$, the argument is handled in exactly the same way, so we omit it and conclude again that
\[
	\zeta_t - \e\overline D_0^\e \zeta_{xx} - \e\zeta_{\theta\theta} + \overline D^\e_0 |\zeta_x|^2 + |\zeta_\theta|^2 + 1 - e^{-\zeta/\e}
		\geq 0
\]
when $|x| \leq R/2$ and $|\theta| > R/2$.
% notice that
%\[\begin{split}
%	&\frac{1}{(L^2 -\theta^2)^4}\left(2 \rho \theta^2 - t(L^2 + 3 \theta^2)(L^2-\theta^2) - \frac{(L^2-\theta^2)^4}{2}\right) \\
%		&\quad\geq \frac{1}{(L^2 - (R/2)^2)^4}\left( \frac{\rho R^2}{2} - 4 T L^4  - \frac{L^8}{2}\right)
%		\geq \frac{1}{(L^2 - (R/2)^2)^4} \left( 4 T^2 L^4 \max_{|\theta'|\leq L} \overline D^\e_0(\theta') + \frac{L^8}{2}\right),
%\end{split}\]
%where we used the definition of $\rho$~\eqref{eq:beta_rho} in the last inequality.  On the other hand,
%\[\begin{split}
%	\frac{1}{(L^2 - x^2)^4}&\left(2 \rho x^2 \overline D^\e_0  - t(L^2 + 3 x^2)(L^2-x^2)\overline D^\e_0 - \frac{(L^2-x^2)^4}{2} \right)\\
%		&\geq \frac{-1}{(L^2 - (R/2)^2)^4} \left( - 4TL^4 \max_{|\theta'|\leq L} \overline D^\e_0(\theta') - \frac{L^8}{2}\right).
%\end{split}\]
%It follows from these two inequalities that
%\[
%	\zeta_t -\e\overline D_0^\e \zeta_{xx} - \e\zeta_{\theta\theta} + \overline D^\e_0 |\zeta_x|^2 + |\zeta_\theta|^2 + 1 - e^{-\zeta/\e}
%		\geq 0
%\]
%when $|x| >R/2$ and $|\theta| \leq R/2$.
\smallskip

{\bf Case three:} if $|x|, |\theta| > R/2$, then, following the argument in~\eqref{eq:c319a} in case one, we see that $2\rho x^2 - \e t(L^2 + 3x^2)(L^2 - x^2) \geq 0$.  Hence,
\[\begin{split}
	&\frac{1}{(L^2 - x^2)^4}\left(2 \rho x^2  \overline D^\e_0  - \e \overline D^\e_0 t(L^2 + 3 x^2)(L^2-x^2) - \frac{(L^2-x^2)^4}{2} \right)
		%&\quad= \frac{\overline D^\e_0}{(L^2 - x^2)^4}\Big(2 \rho x^2 - \e  t(L^2 + 3 x^2)(L^2-x^2)\Big)- \frac{1}{2}
		\geq  - \frac12.
\end{split}\]
Also, arguing similarly as in~\eqref{eq:c319a} and using the definition of $\rho$~\eqref{eq:beta_rho}, we find
\[
\begin{split}
	&\frac{1}{(L^2 - \theta^2)^4}\left(2 \rho \theta^2  - \e t(L^2 + 3 \theta^2)(L^2-\theta^2) - \frac{(L^2-\theta^2)^4}{2}\right) \\
		&\quad\geq \frac{1}{(L^2 - \theta^2)^4}\left( \frac{\rho R^2}{2}  - 4 \e T L^4  - \frac{L^8}{2}\right)
		\geq \frac{1}{(L^2 -\theta^2)^4} \left( 4 \e T L^4 + \frac{L^8}{2}\right)
		\geq \frac{1}{2}.
\end{split}
\]
Summing these two inequalities and recalling~\eqref{eq:c319} implies that
\[
	\zeta_t - \e\overline D_0^\e \zeta_{xx} - \e\zeta_{\theta\theta} + \overline D^\e_0 |\zeta_x|^2 + |\zeta_\theta|^2 + 1 - e^{-\zeta/\e}
		\geq 0
\]
when $|x|, |\theta| >R/2$.
\smallskip

The combination of all three cases above implies that $\zeta$ is a super-solution of~\eqref{eq:shifted_phase} in $(Q_L \setminus \overline Q_{R/2})\times (0, T)$.  By the definition of $\beta$~\eqref{eq:beta_rho}, it follows that $\zeta \geq \nu$ on $\partial Q_{R/2} \times [0,T]$.  Also, since $\nu$ is finite on $\overline{Q_L}\times [0,T]$ (see the discussion at the end of Step One) and $\zeta = +\infty$ on $(Q_L\setminus \overline Q_{R/2})\times \{t=0\}$ and on $\partial Q_L \times [0,T]$, then $\zeta \geq \nu$ on $(Q_L\setminus \overline Q_{R/2})\times \{t=0\}$ and on $\partial Q_L \times [0,T]$.  It follows that $\zeta\geq \nu$ on the parabolic boundary of $(Q_L\setminus \overline{Q_{R/2}})\times(0,T)$.  The maximum principle then implies that $\nu \leq \zeta$ in $(Q_L\setminus \overline{Q_{R/2}}) \times (0,T)$.  Given the definition of $\zeta$ and the preliminary bound on $v^\e$ on $Q_{3R/4}$~\eqref{eq:preliminary_bound}, it follows that there exists a constant $C$ that depends only on $u_0$, $\theta_0$, $D$, $L$, $R$, and $T$ such that
\[
	v^\e \leq C
		\qquad\text{ in } Q_{L/2}(0,\theta_0)\times \left[\frac2T,\frac2T+ T\right].
\]
Since $L$ and $T$ are arbitrary, this concludes the proof.

\end{proof}

\subsection{The half-relaxed limits}

We next recall the definition of  the classical half-relaxed limits $v^*$ and $v_*$:
\begin{equation}\label{eq:half_relaxed}
	v^*(x,\theta,t) = \limsup_{\substack{(y,\eta,s) \to (x,\theta,t),\\ \epsilon \to 0}} v^\epsilon(y,\eta,s)
		\quad \text{ and } \quad
	v_*(x,\theta,t) = \liminf_{\substack{(y,\eta,s) \to (x,\theta,t),\\ \epsilon \to 0}} v^\epsilon(y,\eta,s).
\end{equation}
The existence of these  limits  is guaranteed by \Cref{lem:apriori} along with the fact that, as discussed in \Cref{sec:results}, $v^\epsilon \geq 0$.  We point out that $v_*$ is lower semi-continuous while $v^*$ is upper semi-continuous.

\subsubsection*{Equations for $v_*$ and $v^*$}

Our first step is to prove that $v_*$ and $v^*$ satisfy the limits that the theory of viscosity solutions suggest.  The issues here are  the boundary behavior and verifying the initial conditions.

\begin{lem}\label{lem:phi_*_equation}
	The relaxed lower limit  $v_*$ satisfies in the viscosity sense
	\begin{equation}\label{eq:phi_*_equation}
		\begin{cases}
			\min\left\{(v_*)_t + \overline D(\theta) |(v_*)_x|^2 + |(v_*)_\theta|^2 + 1, v_*\right\} \geq 0 \quad &\text{ in } \quad  \R \times \R^+ \times \R^+,\\
			\max\left\{  -(v_*)_\theta, \min\left\{(v_*)_t  + |(v_*)_\theta|^2 + 1, v_*\right\}\right\} \geq 0\quad & \text{ on } \quad  \R \times \{0\}\times\R^+,
		\end{cases}
	\end{equation}
and
	\begin{equation}\label{eq:lower_initial_condition}
		v_*(\cdot,\cdot ,0)
			= \begin{cases}
				0  &\text{ in  } \quad G_0,\\
				\infty  \quad &\text{ in } \quad  \overline G_0^c.
			\end{cases}
	\end{equation}
\end{lem}
\begin{proof}
We verify~\eqref{eq:phi_*_equation} first.  Assume that,  for some test function $\psi$,  $v_* - \psi$ has a strict local minimum at $(x_0,\theta_0,t_0) \in \R \times [0,\infty)\times \R^+$.  We may then choose  $\epsilon_k$  converging to $0$ and $(y_k, \eta_k, s_k)$ converging to $(x_0,\theta_0,t_0)$ as $k$ tends to infinity such that $(y_k,\eta_k,s_k)$ is a local minimum of $v^{\epsilon_k} - \psi$  in $ \R \times [0,\infty)\times [0,\infty)$ and $v_*(x_0,t_0, \theta_0) = \lim_{k\to\infty} v^{\epsilon_k}(y_k,\eta_k,s_k).$

%There are three cases  to check.  

If  $(x_0,\theta_0,t_0) \in \R \times \R^+\times \R^+$, then, for sufficiently large $k$, $(y_k,\eta_k,s_k) \in  \R \times \R^+\times \R^+$.  Since $v^\epsilon$ solves~\eqref{eq:phase}, at $(y_k,\eta_k,s_k)$, we have, at $(y_k, \eta_k, s_k)$,
\[\begin{split}
	0 \leq \psi_t - \epsilon_k \overline D^{\epsilon_k} \psi_{xx} - \epsilon_k \psi_{\theta\theta} + \overline D^{\epsilon_k} |\psi_x|^2 + |\psi_\theta|^2 + 1 - e^{-\psi/\epsilon_k}
		%&\leq \psi_t - \epsilon_k \overline D^{\epsilon_k} \psi_{xx} - \epsilon_k \psi_{\theta\theta} + \overline D^{\epsilon_k} |\psi_x|^2 + |\psi_\theta|^2 + 1
		\leq \psi_t - \overline D |\psi_x|^2 + |\psi_\theta|^2 + 1 + o(1).
\end{split}\]
Here and in the sequel, we use $o(1)$ to mean a quantity that tends to zero in the limit.  Taking the limit as $k$ tends to infinity and using the smoothness of $\psi$ yields, at $(x_0,\theta_0,t_0)$, $0 \leq \psi_t  + \overline D \psi_x^2 + \psi_\theta^2 + 1.$
As discussed above, $v_* \geq 0$ on $\R\times \R^+\times\R^+$.  From this and the inequality above, we conclude that
\[
	\min\left\{(v_*)_t + \overline D(\theta) |(v_*)_x|^2 + |(v_*)_\theta|^2 + 1, v_*\right\} \geq 0
		\qquad \text{ in } \R\times \R^+ \times \R^+,
\]
which finishes the proof in this case.
\smallskip

Assume next that $(x_0,\theta_0,t_0) \in \R \times \{0\}\times \R^+$.  If $\eta_k > 0$ for infinitely many $k$, the fact that $v^{\epsilon_k}$ solves~\eqref{eq:phase} yields, at $(y_k,\eta_k,s_k)$,
\[
	0 \leq \psi_t - \epsilon_k \overline D^{\epsilon_k} \psi_{xx} - \epsilon_k \psi_{\theta\theta} + \overline D^{\epsilon_k} \psi_x^2 + \psi_\theta^2 + 1 - e^{-\phi^{\epsilon_k}/\epsilon_k}
		\leq \psi_t + \overline D^{\epsilon_k} \psi_x^2 + \psi_\theta^2 + 1 + o(1).
\]
Letting $k$ tend to infinity, we find, at $(x_0,0,t_0)$, $0 \leq \psi_t + \overline D \psi_x^2 + \psi_\theta^2 + 1.$
If $\eta_k = 0$ for all $k$ sufficiently large, then, since $v^{\epsilon_k}$ satisfies Neumann boundary conditions, we have  $0 \leq - \psi_\theta(y_k,0,s_k).$
Letting $k$ tend to infinity, we find $0 \leq - \psi_\theta(x_0,0,t_0)$. In either case, we have verified that
\[
	\max\left\{  -(v_*)_\theta, \min\left\{(v_*)_t  + |(v_*)_\theta|^2 + 1, v_*\right\}\right\} \geq 0\quad  \text{ on } \quad  \R \times \{0\}\times\R^+.
\]
%Taking limits, we obtain the inequality $0 \leq - (v_*)_\theta$, which yields the claim.
\smallskip

Finally we need to consider the initial condition~\eqref{eq:lower_initial_condition}.  Fix $\mu >0$ and  any smooth function $\zeta \in C^\infty(\R\times [0,\infty);[0,1])$ such that $\zeta|_{\overline G_0} \equiv 0$ and $\zeta|_{\R\times \R^+ \setminus \overline G_0} > 0$.   Then
\begin{equation}\label{eq:phi_*_0}
\begin{cases}
	\max\left\{(v_*)_t + \overline D |(v_*)_x|^2 + |(v_*)_\theta|^2 + 1, v_* - \mu \zeta\right\} \geq 0 \quad 
		&\text{ in } \quad  \R \times [0,\infty)\times \{0\},\\[1mm]
	\max\left\{ -(v_*)_\theta, (v_*)_t + |(v_*)_\theta|^2 + 1, v_* - \mu \zeta\right\} \geq 0
			\quad  &\text{ in } \quad  \R \times \{0\}\times \{0\}.
\end{cases}
\end{equation}
Indeed, if $(x_0,\theta_0) \in \overline G_0$, \eqref{eq:phi_*_0} holds  since $v_* \geq 0$ and $\zeta \equiv 0$ on $\overline G_0$.  If $(x_0,\theta_0) \in \R\times \R^+ \setminus \overline G_0$ and  $v_*(0,x_0,\theta_0) < \mu \zeta(x_0,\zeta_0)$ then, since $v_*$ is finite at $(x_0,\theta_0)$, we argue exactly as in the second paragraph of this proof  to obtain $(v_*)_t + \overline D |(v_*)_x|^2 + |(v_*)_\theta|^2 + 1 \geq 0$. We proceed similarly if $\theta_0 = 0$  using the arguments of the third paragraph of this proof.  Hence, we obtain~\eqref{eq:phi_*_0}.
\smallskip

It follows immmediately from~\eqref{eq:small_on_G_0} of \Cref{lem:apriori} and the definition of $\liminf$ that $v_* = 0$ on $\{0\}\times G_0$.  If $(x_0,\theta_0) \in \R\times\R^+\setminus \overline G_0$, then we assume, by contradiction, that $v_*(x_0,\theta_0,0) < \infty$.
\smallskip

Choose $\mu$ sufficiently large so that $v_*(x_0,\theta_0,0) < \mu \zeta(x_0,\theta_0,0)$.  Let
\begin{equation}\label{eq:lambda}
	\lambda_\delta = 1 + \frac{1}{\delta} + \frac{8(1 + \overline D(\theta_0)) v_*(x_0,\theta_0,0)}{\delta}.
\end{equation}
Notice that $\lambda_\delta$ tends to infinity as $\delta$ tends to zero.  Define the test function
$\psi_\delta(x,\theta,t) := -\delta^{-1}(|x - x_0|^2 + |\theta -\theta_0|^2) - \lambda_\delta t.$
Since $v_*$ is lower semi-continuous, $v_* - \psi_\delta$ attains a  minimum at some $(x_\delta, \theta_\delta, t_\delta) \in \R \times[0,\infty)\times[0,\infty)$.  Further, $v^*(x_0,\theta_0,0) < +\infty$ and $\psi_\delta(x,\theta,t)$ tends to infinity locally uniformly away from $(x_0,\theta_0,0)$.  Thus, $(x_\delta,\theta_\delta,t_\delta)$ converges to $(x_0,\theta_0,0)$ as $\delta$ tends to zero. As $(x_\delta,\theta_\delta, t_\delta)$ is a minimum of $v_* - \psi_\delta$, we see that% $(x_\delta,\theta_\delta,t_\delta)$ is the location of a minimum, then
\begin{equation}\label{eq:phi_*_bound}
	v_*(x_\delta, \theta_\delta, t_\delta) + \lambda_\delta t_\delta + \frac{|x_\delta-x_0|^2 + |\theta_\delta-\theta_0|^2}{\delta}
		\leq	v_*(x_0,\theta_0,0).
\end{equation}

\sloppy  We now collect four properties that hold when $\delta$ is small and rely the fact that $(x_\delta,\theta_\delta,t_\delta)$ converges to $(x_0,\theta_0,0)$ as $\delta$ tends to zero.  Firstly, by~\eqref{eq:phi_*_bound}, if $t_\delta>0$ for any $\delta$ then $v_*(x_0,\theta_0,0)>0$ and, thus, $v_*(x_\delta,\theta_\delta,t_\delta)>0$ if $\delta$ is sufficiently small due to the the lower semi-continuity of $v_*$.  Secondly,~\eqref{eq:phi_*_bound}, the lower semi-continuity of $v_*$, and the fact that $v_*(x_0,\theta_0,0) < \mu\zeta(x_0,\theta_0,0)$, imply that if $\delta$ is sufficiently small,  $0 < v_*(x_\delta,\theta_\delta,t_\delta) < \mu \zeta(x_\delta,\theta_\delta,t_\delta)$.  Thirdly, the continuity of $\overline D$ implies that $\overline D(\theta_\delta) \leq 2 \overline D(\theta_0)$ for all $\delta$ sufficiently small.  Fourthly and finally, since $\theta_0>0$, then $\theta_\delta > 0$ if $\delta$ is sufficiently small.  Fix $\delta_0>0$ such that, if $\delta \in (0,\delta_0)$ then all four properties above hold.
\smallskip

Suppose that $t_\delta>0$ for some $\delta\in(0,\delta_0)$. 
% for $\delta$ small enough since $v_*$ is lower semi-continuous. %In addition, due to~\eqref{eq:phi_*_bound} with $(x,\theta,t) = (x_0,\theta_0,0)$, it follows that $v_*(x_\delta,\theta_\delta,t_\delta) < \mu\zeta(x_\delta,\theta_\delta,t_\delta)$ when $\delta$ is sufficiently small.  
Using that $v_*$ satisfies~\eqref{eq:phi_*_equation} for $t_\delta > 0$ and $v_*(x_\delta,\theta_\delta,t_\delta)>0$, we have %by~\eqref{eq:phi_*_0} either $v_*(x_\delta,\theta_\delta,t_\delta) \geq \mu \zeta(x_\delta,\theta_\delta,t_\delta)$ or
\begin{equation}\label{eq:psi_delta_inequality}
\begin{split}
	0
		&\leq \psi_t(x_\delta,\theta_\delta,t_\delta) + \overline D(\theta_\delta) \psi_x(x_\delta,\theta_\delta,t_\delta)^2 + \psi_\theta(x_\delta,\theta_\delta,t_\delta)^2 + 1\\
%		&= -\lambda_\delta + \frac{4(\overline D(\theta_\delta)|x_\delta - x_0|^2 + |\theta_\delta - \theta_0|^2)}{\delta^2} + 1\\
		&\leq -\lambda_\delta + \frac{4(2\overline D(\theta_0)+1)(|x_\delta - x_0|^2 + |\theta_\delta - \theta_0|^2)}{\delta^2} + 1.
\end{split}
\end{equation}
Above we used that $\overline D(\theta_\delta) \leq 2 \overline D(\theta_0)$.  Using now~\eqref{eq:phi_*_bound} in~\eqref{eq:psi_delta_inequality}, we find
\begin{equation}\label{eq:phi_*_contradiction}
	0 %\leq -\lambda_\delta + \frac{4(2\overline D(\theta_\delta)+1)(|x_\delta - x_0|^2 + |\theta_\delta - \theta_0|^2)}{\delta^2} + 1
		\leq - \lambda_\delta + \frac{4(2 \overline D(\theta_0) + 1) v_*(x_0,\theta_0,0)}{\delta} + 1.
\end{equation}
In view of the definition of $\lambda_\delta$~\eqref{eq:lambda}, the right hand side is negative.  This yields a contradiction.
\smallskip

%Suppose that $t_\delta > 0$ and $\theta_\delta = 0$. 
%Due to~\eqref{eq:phi_*_bound} and the fact that $t_\delta > 0$, we have that $v_*(x_0,0,t_0) > 0$.  Then, arguing as above, we have that
%\[
%	(\psi_\delta)_t + |(\psi_\delta)_\theta|^2 + 1  < 0,
%\]
%by our choice of $\lambda_\delta$.  Then~\eqref{eq:phi_*_equation} implies that
%\begin{equation}\label{eq:phi_*_contradiction2}
%	0 \leq - \left(\psi_\delta\right)_\theta\left(x_\delta, \theta_\delta, t_\delta\right)
%		= \frac{2 (\theta_\delta - \theta_0)}{\delta}
%		= -\frac{2 \theta_0}{\delta}.
%\end{equation}
%This is clearly a contradiction.  Hence, it must be that $t_\delta = 0$.

If $t_\delta = 0$ for all $\delta\in(0,\delta_0)$, the proof is the same as above, with~\eqref{eq:phi_*_0} playing the role of~\eqref{eq:phi_*_equation}.  Indeed, as observed above, we have that $v_*(x_\delta,\theta_\delta,t_\delta) < \mu \zeta(x_\delta,\theta_\delta,t_\delta)$.  Using this and that $v_*$ satisfies~\eqref{eq:phi_*_0}, we find, at $(x_\delta,\theta_\delta,t_\delta)$, $\psi_t + \overline D |\psi_x|^2 + |\psi_\theta|^2 + 1 \geq 0.$
Using the definition of $\psi$ and the choice of $\lambda_\delta$, we obtain the same contradiction as in~\eqref{eq:phi_*_contradiction}.
\smallskip

Having reached a contradiction in both cases, we conclude that $v_*(x_0,\theta_0,0) = +\infty$.
\end{proof}

We now obtain the equation for $v^*$.  The argument  is slightly more complicated since $v^*\geq 0$ and, hence, for the first equation must consider the cases where $v^*$ is zero or positive.

\begin{lem}\label{lem:phi^*_equation}
	The upper relaxed half limit $v^*$ is a viscosity solution to 
	\begin{equation}\label{eq:phi^*_equation}
		\begin{cases}
			\min\left\{(v^*)_t + \overline D |(v^*)_x|^2 + |(v^*)_\theta|^2 + 1, v^*\right\} \leq 0 
				\quad &\text{ in } \quad  \R \times \R^+ \times \R^+,\\[1mm]
			\min\left\{  -(v^*)_\theta, \min\left\{(v^*)_t  + |(v^*)_\theta|^2 + 1, v^*\right\}\right\} \leq 0,
				\quad  &\text{ on } \quad   \R \times \{0\}\times \R^+,
		\end{cases}
	\end{equation}
	and
	\begin{equation}\label{eq:lower_initial_condition}
		v^*(\cdot, \cdot,0)
			= \begin{cases}
				0 \quad \text{ in } \quad   G_0,\\[1mm]
				\infty \quad \text{ in }  \quad \overline G_0^c.
			\end{cases}
	\end{equation}
\end{lem}
\begin{proof}
The proof of \Cref{lem:phi^*_equation} is similar to that of \Cref{lem:phi_*_equation}, thus we omit some details and provide only a sketch of the proof.
\smallskip

We first verify~\eqref{eq:phi^*_equation}.  Assume that,  for some test function $\psi$,  $v^* - \psi$ has a strict local maximum at $(x_0,\theta_0,t_0) \in \R \times [0,\infty)\times \R^+$.  We may then choose  $\epsilon_k$ converging to $0$ and $(y_k, \eta_k, s_k)$ converging to $(x_0,\theta_0,t_0)$ as $k$ tends to infinity such that $(y_k,\eta_k,s_k)$ is  a local maximum  of $v^{\epsilon_k} - \psi$ and 
\[
	v^*(x_0,t_0, \theta_0) = \lim_{k\to\infty} v^{\epsilon_k}(y_k,\eta_k,s_k).
\]
To check~\eqref{eq:phi^*_equation}, we need only consider the set $\{v^* > 0\}$ since~
\eqref{eq:phi^*_equation} is satisfied whenever $v^* = 0$. 
\smallskip

If  $t_0 > 0$ and $\theta_0 > 0$, then for sufficiently large $k$, $t_k, \theta_k >0$ and, at 
% there exists $\epsilon_k \to 0$ and $(y_k,\eta_k,s_k)\to (x_0,\theta_0,t_0)$ which is the location of a maximum of $v^{\epsilon_k} - \psi$ and such that $v^{\epsilon_k}(y_k,\eta_k,s_k) \to v^*(x_0,\theta_0,t_0)$.  %We may also choose $(y_k,\eta_k,s_k)$ such that $\phi^{\epsilon_k}(y_k,\eta_k,s_k)$ is uniformly positive.  
%Then, since $\phi^{\epsilon_k}$ satisfies~\eqref{eq:phase}, we have that, at 
$(y_k,\eta_k,s_k)$,
\[
	0 \geq \psi_t - \epsilon_k \overline D^{\epsilon_k} \psi_{xx} - \epsilon_k \psi_{\theta\theta} + \overline D^{\epsilon_k}|\psi_x|^2 + |\psi_\theta|^2 + 1 - e^{- v^{\epsilon_k}/\epsilon_k}.
\]
Since $v^{\epsilon_k}(y_k,\eta_k,s_k)$ converges to $v^*(x_0,\theta_0,t_0) > 0$ as $k$ tends to $\infty$, the last term tends to zero as $k$ tends to infinity.  In addition, the regularity of $\psi$ implies that, after taking the limit $k$ to infinity, at $(x_0,\theta_0,t_0)$, $0 \geq \psi_t + \overline D|\psi_x|^2 + |\psi_\theta|^2 + 1.$
If $\theta_0 = 0$ we argue similarly as in \Cref{lem:phi_*_equation}.
\smallskip

We now consider the case $t_0 = 0$.  Fix any point $(x_0,\theta_0) \in G_0$.  Using~\eqref{eq:small_on_G_0}, we have that $v^\e$ converges to zero uniformly on any compact subset of $G_0\times[0,\infty)$.  Hence $v^*(x_0,\theta_0,0) = 0$.
\smallskip

\sloppy On the other hand, fix any point $(x_0,\theta_0) \in \overline G_0^c$, and notice that $v^\e(x_0,\theta_0,0) = -\e \log(u_0(x_0,\theta_0,0)) = - \e \log(0) = +\infty$.  It then follows immediately from the definition of $\limsup$ that $v^*(x_0,\theta_0,0) = \infty$.  This concludes the proof.

\end{proof}

\subsection{The equality of $v_*$ and $v^*$}\label{sec:identification}

As noted above, by construction, $v_* \leq v^*$.  In addition, $v_*$ and $v^*$ are a super- and a sub-solution to the same equation with the same initial conditions except on the small set $\partial G_0$.  In this section, we show that $v_* = v^*$.

\vspace{-.2in}
\subsubsection*{Existence and uniqueness of $I$}

\vspace{-.1in}
We outline the argument developed in Crandall, Lions and Souganidis~\cite{CrandallLionsSouganidis} that yields that there exists a unique solution to~\eqref{eq:I} with initial condition~\eqref{eq:I_0_intro}.
%\begin{equation}\label{eq:I}
%	\begin{cases}
%		\min\left\{ I_t + \overline D |I_x|^2 + |I_\theta|^2 + 1, I\right\} = 0 \quad &\text{ in }  \quad \R \times \R^+\times \R^+,\\[1mm]
%		\max\left\{ - I_\theta, \min\left\{I_t + |I_\theta|^2 + 1, I\right\}\right\} \geq 0  \quad &\text{ in }  \quad \R\times \{0\}\times \R^+,\\[1mm]
%		\min\left\{ - I_\theta, \min\left\{I_t + |I_\theta|^2 + 1, I\right\}\right\} \leq 0 \quad &\text{ in }  \quad \R\times \{0\}\times \R^+,
%	\end{cases}
%\end{equation}
%with the initial condition
%\[
%	I(\cdot,\cdot ,0)
%		= \begin{cases}
%			0  \quad &\text{ on }  \quad G_0,\\[1mm]
%			+\infty \quad &\text{ on }  \quad  \overline G_0^c.
%		\end{cases}
%\]
\smallskip

For any open, convex, $C^3$ set $U$, let  $\mathcal{C}_U: = \{ \zeta \in C^0 (\R\times [0,\infty)) : \zeta|_{U} \equiv 0\}$ and  denote by $S(t)\zeta$  the solution to~\eqref{eq:I} with the initial data $\zeta \in \mathcal{C}_U$.  The existence and uniqueness of $S(t)\zeta$ are well-understood;  see,~\cite{CrandallIshiiLions}.  In addition,  arguments as in \Cref{sec:apriori} give  bounds on $S(t)\zeta$ in $\R\times[0,\infty)\times\R^+$. % Using Bernstein's method, it is not difficult to obtain Lipschitz estimates on $S(t)\zeta$ as well.
Let $I(x,\theta,t) := \sup_{\zeta \in \mathcal{C}_{G_0}} S(t)\zeta.$
Following~\cite{CrandallLionsSouganidis}, we observe that $I$ is the unique maximal solution of~\eqref{eq:I}.  We note that, due to the Neumann boundary conditions, this does not follow directly from~\cite{CrandallLionsSouganidis}. The extension is, however, straightforward.%, and we leave the details to the reader. 

\vspace{-.1in}
\subsubsection*{The equality of $v_*$ and $v^*$}

\vspace{-.05in}
\begin{proof}[Proof of \Cref{prop:phi_convergence}]
First, we show that $v_* \geq I$.  To this end, fix any $\zeta \in \mathcal{C}_{G_0}$. %and, by construction, $I \geq S_{G_0}(t)\zeta$.
Observe that $v_*(\cdot,\cdot,0) \geq \zeta$ on $\R\times[0,\infty)$.  The standard comparison principle, along with \Cref{lem:phi_*_equation}, yields $v_* \geq S(t)\zeta$ on $\R\times[0,\infty)\times \R^+$.  Since this  is true for all $\zeta$,  we find $I = \sup_{\zeta \in \mathcal{C}_{G_0}} S(t)\zeta\leq v_*.$

Next, we show that $v^* \leq I$.  Fix $\delta > 0$ and define $G_\delta := \{(x,\theta) \in G_0 : \dist((x,\theta), G_0^c) > \delta\}.$ Let $I_\delta = \sup_{\zeta\in\mathcal{C}_{G_\delta}} S(t)\zeta$.  Fix any $\sigma>0$.  By \Cref{lem:apriori}, we have that $v^*(\cdot,\cdot,\sigma)$ is finite on $\R\times [0,\infty)$ and is zero on $G_0$.  Hence, there exists $\zeta \in \mathcal{C}_{G_\delta}$ such that $v^*(\cdot,\cdot,\sigma) \leq \zeta$.  From the comparison principle, it follows that, for all $(x,\theta,t) \in \R\times[0,\infty)\times[0,\infty)$,
\[
	v^*(x,\theta,t+\sigma)
		\leq (S(t)\zeta)(x,\theta)
		\leq \sup_{\zeta'\in\mathcal{C}_{G_\delta}} (S(t)\zeta')(x,\theta)
		= I_\delta(x,\theta,t).
\]
Taking $\sigma$ to zero, we obtain $v^* \leq I_\delta$ on $\R\times[0,\infty)\times\R^+$.  Further, it is easy to see\footnote{This is intuitively clear and can be observed in many ways.  In the current manuscript, the quickest is, perhaps, using the inclusion $\{I_\delta > 0\} \subset \{w_\delta = 1\}$ seen in~\Cref{sec:inclusions}, where $w_\delta$ satisfies~\eqref{eq:w_intro} with $G_0$ replaced by $G_\delta$.  A straightforward computation using~\eqref{eq:w_variational} yields $\sigma_\delta$ such that $w_\delta(\cdot,\cdot,\sigma_\delta)|_{G_0} \equiv 0$, from which the claim follows.} that, there exists $\sigma_\delta$, which tends to zero as $\delta$ does and depends only on $G_0$ and $\delta$, such that $I_\delta(\cdot,\cdot,\sigma_\delta)\in \mathcal{C}_{G_0}$.  We conclude that
\[
	v^*(x,\theta,t+\sigma_\delta)
		\leq I_\delta(x,\theta,t+\sigma_\delta)
		\leq I(x,\theta,t)
\]
for all $(x,\theta,t) \in \R\times[0,\infty)\times\R^+$.  Taking $\delta$ to zero, we conclude that $v^* \leq I$, as desired.
\smallskip

%Fix $\sigma, \delta > 0$ and define 
%\[
%	G_\delta := \{(x,\theta) \in G_0 : |x|<\delta^{-1}, \dist((x,\theta), G_0^c) > \delta\}
%		~~~\text{ and }~~~
%		\Lambda_{\sigma,\delta} := \sup_{(x,\theta)\in G_\delta} v^*(x,\theta,\sigma).
%\]
%Up to decreasing $\delta$ if necessary, we may assume that $G_\delta \neq \emptyset$.  It follows from \Cref{lem:apriori} that $\Lambda_{\sigma,\delta} < +\infty$.  In addition, from~\eqref{eq:small_on_G_0}, we see that, for fixed $\delta$, $\Lambda_{\sigma,\delta}$ converges to $0$ as $\sigma$ tends to $0$. %more specifically, see the paragraph below~\eqref{eq:preliminary_bound}.
%\smallskip
%
%Let  $I_{\sigma,\delta}$ to be the solution of~\eqref{eq:I} with initial data at $t = \sigma$ which is $\Lambda_{\sigma,\delta}$ on $G_\delta$ and is $+\infty$ on $\overline G_\delta^c$, which  is constructed in the same way as $I$.  Then \Cref{lem:phi^*_equation} and the comparison principle for time $t \geq \sigma$ imply  that, on $\R \times [0,\infty) \times [\sigma,\infty)$, $v^* \leq I_{\sigma,\delta}$.  Letting first $\sigma$ tend to zero and then $\delta$ tend to zero and using that $I_{\sigma,\delta}$ converges to $I$ on $\R\times [0,\infty)\times \R^+$, we find $v^* \leq I.$
%\smallskip

Hence we have that $v_* \leq v^* \leq I \leq v_*$, which  implies  that all three functions must be equal.  In particular, we have that $v^\epsilon$ converges locally uniformly to $I$, finishing the proof.
\end{proof}

\section{The relationship between $I$, $J$, and $w$ -- \Cref{prop:geometric,prop:I_J}}\label{sec:relationship}

We now characterize the location of the front in a more tractable manner; that is we prove \Cref{prop:geometric,prop:I_J}.  We do not follow the approach of ~\cite{Freidlin1,Freidlin2}, in which the author shows directly that $I= \max\{J,0\}$ by developing a theory for and checking a condition on the minimizing paths of $J$.  As this condition is difficult to verify, we, instead, opt for a PDE proof based on the work in~\cite{MajdaSouganidis} using $w$ in an intermediate step. 
We note that, since the Hamiltonian associated to~\eqref{eq:I}, $H(x,\theta,p_x,p_\theta) := \overline D(\theta) |p_x|^2 + |p_\theta|^2 + 1$, is not homogeneous, that is, it depends on $\theta$, the arguments from~\cite{MajdaSouganidis} do not directly apply.  We outline our proof below, and make note of the differences with~\cite{MajdaSouganidis}.
\smallskip

In order to prove \Cref{prop:geometric,prop:I_J}, we show equivalence of the various level and super-level sets involved and then we apply \Cref{thm}.  The inclusion $\{J>0\}\subset\{I>0\}\subset\{w=1\}$ follows from the maximum principle, as in~\cite{MajdaSouganidis}.  To close this chain of inclusions, we require $\{w=1\}\subset\{J>0\}$.  This is accomplished in~\cite{MajdaSouganidis} via the Hopf-Lax formula; however, this only applies when the Hamiltonian is independent of $(x,\theta,t)$ and so is not useful here.  We get around this by using the fact that $w$ is given as the solution to a variational problem similar to the one defining $J$.  We can then compare these two functions directly.
\smallskip
%
%The initial step, showing that $J \leq I$, is exactly as in~\cite{MajdaSouganidis}, up to discussing potential complications due to the boundary.  It is a simple consequence of the maximum principle.  This implies that $\{I = 0\}\subset \{J \leq 0\}$.  To obtain, the reverse inclusions, we use, crucially, the solution $w$ to the geometric front equation.  To establish the relationship between the $w$ and $I$, that is, that $\{w = 0\}\subset \{I = 0\}$, we argue as in~\cite{MajdaSouganidis} by using a sign-preserving transformation that takes $I$ to a sub-solution of the geometric front equation~\eqref{eq:w_intro}.  On the other hand, regarding the relationship between $J$ and $w$, that is, that $\{J \leq 0 \} \subset \{w = 0\}$, we use the characterizations of $J$ and $w$ in terms of variational problems involving trajectories connecting $(x_0,\theta_0)$ and $G_0$.  The integrands in these two variational problems may be directly compared, which allows us to conclude.  This is somewhat different than the work in~\cite{MajdaSouganidis}, where the authors have a more explicit characterization of the front (see the proof of \cite[Proposition 2.4]{MajdaSouganidis}.}
%\smallskip

%Recalling~\eqref{eq:action} and~\eqref{eq:w_intro} from \Cref{sec:results}, we now show that the movement of $\partial\{I > 0\}$ can  be understood in terms of $\{J = 0\}$ and $\partial \{w > 0\}$.  To this end, we first show that $J$ is a sub-solution to $I$.  Then we discuss the existence and uniqueness of $w$.  Finally, we use these to prove \Cref{prop:geometric} and \Cref{prop:I_J}.

In order to follow this outline, we first show the following two key facts: that $J$ is a sub-solution of~\eqref{eq:I} and that $w$ can be represented by a variational problem.

\subsection{The equation for $J$}\label{sec:J}

We first show that    $J$ solves
\begin{equation}\label{eq:J}
	\begin{cases}
		J_t + \overline D(\theta) |J_x|^2 + |J_\theta|^2 + 1 = 0 \quad &\text{ in } \quad \R \times \R^+ \times\R^+,\\[1mm]
%		\max\left\{ - I_\theta, \min\left\{I_t + |I_\theta|^2 + 1, I\right\}\right\} \geq 0, ~~~&\text{ on }  \R\times \{0\}\times (0,\infty),\\
		\min\{-J_\theta,J_t + |J_\theta|^2 + 1\} \leq 0\quad  &\text{ on } \quad \R\times \{0\}\times \R^+,
	\end{cases}
\end{equation}
from which it follows that $J$ is a sub-solution of~\eqref{eq:I}. 
%\begin{equation*}
%	\begin{cases}
%		\min\left\{J_t + \overline D(\theta) |J_x|^2 + |J_\theta|^2 + 1, J\right\} \leq 0 \quad &\text{ in } \quad \R \times \R^+\times \R^+,\\[1mm]
%%		\max\left\{ - I_\theta, \min\left\{I_t + |I_\theta|^2 + 1, I\right\}\right\} \geq 0, ~~~&\text{ on }  \R\times \{0\}\times (0,\infty),\\
%		\min\left\{ - J_\theta, \min\left\{J_t + |J_\theta|^2 + 1, J\right\}\right\} \leq 0\quad&\text{ on } \quad  \R\times \{0\}\times \R^+. 
%	\end{cases}
%\end{equation*}
The main difficulty is verifying the boundary condition.
%Recalling the initial conditions for $I$, the maximum principle implies that $J \leq I$. 
We note that $J$ actually satisfies the Neumann boundary condition in $\theta$, but this is not necessary for our purposes so we do not show it.
% conclude the proof of~\Cref{prop:I_J}.  Let $\tilde I(x,\theta, t) := \max\{0,J(x,\theta,t)\}$.  In this section, we show that $I = \tilde I$.  In view of the uniqueness of solutions to~\eqref{eq:I}, we need only show that $\tilde I$ satisfies the equation~\eqref{eq:I}.  We do this in three steps: first, we handle the case when $J > 0$ and $\theta > 0$; second, we show that the equation is satisfied when $\theta>0$; third, we check the boundary condition.

\begin{proof}[Proof of~\eqref{eq:J}]
%Before beginning, we point out that $\tilde I$ clearly satisfies the correct initial conditions.
%\subsubsection*{The case when $J \neq 0$ and $\theta > 0$}
%\subsubsection*{The case when $\theta_0 > 0$}

%%In this section, we assume that
%%\[
%%	(-\infty,x_r) \times (0,\underline\theta)
%%		\subset G_0
%%		\subset(-\infty,x_r) \times (0,\overline\theta)
%%\]
%%for some $x_r$ and $0 < \underline \theta < \overline \theta$ (I don't think that these results are true otherwise).

%We begin by noting that on $\Int\{J \leq 0\}$, $\tilde I$ is locally equal to zero.  Hence, $\tilde I$ satisfies~\eqref{eq:I} on the set $\{(x,\theta,t) \in \R\times[0,\infty)\times(0,\infty): J(x,\theta,t) < 0\}$.  We next handle the case when $J > 0$.

In \Cref{sec:appendix}, we discuss how the classical arguments may be easily adapted to show that $J$ solves~\eqref{eq:J} on $\R\times\R^+\times\R^+$. 
%\begin{equation}\label{eq:linearized_hamiltonian}
%	J_t + \overline D |J_x|^2 + |J_\theta|^2 + 1 = 0.
%\end{equation}
The main point is that optimal trajectories in the definition of $J$ exist and remain bounded away from the set $\R\times\{0\}$, see \Cref{sec:appendix}.  As such, one may show that the dynamic programming principle is verified and argue as usual.
\smallskip

Next, we show that $\min\left\{-J_\theta, J_t + |J_\theta|^2 + 1 \right\} \leq 0$ on $\R\times\{0\}\times\R^+$.  For any test function $\varphi$,  assume that $J-\varphi$ has a strict maximum at $(x_0,0,t_0) \in \R\times \{0\}\times \R^+$ in a ball\footnote{Here, we define a ball as follows: for any $(x,\theta,t) \in \R\times [0,\infty)\times[0,\infty)$, let $B_R(x,\theta,t) := \{(y,\eta,s) \in \R\times[0,\infty)\times [0,\infty): |x-y|^2 + |\theta-\eta|^2 + |t-s|^2 < R^2\}$.  In particular, we include only those points in the ambient space $\R\times[0,\infty)\times[0,\infty)$.} $\overline B_r(x_0,0,t_0)$. Without loss of generality, assume that  $(J-\phi)(x_0,0,t_0)=0$ and $r < t_0$. If $-\varphi_\theta(x_0,0,t_0) \leq 0$ then we are finished.    Hence, we may assume that $\varphi_\theta(x_0,0,t_0) < 0$. % and $\tilde I(x_0,0,t_0) > 0$.
\smallskip

Fix any smooth function $\psi:[0,\infty) \to \R$ such that $\psi(0) = 0$, $\psi(1) = -1$, and $\psi(2) = 0$, which  is strictly increasing on $[0,1/2] \cup [1,\infty)$ and strictly decreasing on $[1/2,1]$. 
%
%
%\begin{figure}
%\begin{center}
%\begin{overpic}[scale = .23]%[grid,tics=10]
%		{psi.pdf}
%	\put (-1,64){$\psi$}	
%	\put (95,27){$\theta$}	
%	\put (63.5,27){$2$}
%	\put (35,27){$1$}
%	\put (-5, 7){$-1$}
%	\put (21,27){$\frac{1}{2}$}
%\end{overpic}
%\caption{A sketch of the shape of $\psi$.}\label{fig:psi}
%\end{center}
%\end{figure}
%
%
%
For any $\epsilon, \delta >0$, let
\[
	\varphi_{\delta,\epsilon}(x,\theta,t)
		:= \varphi(x,\theta,t) + \epsilon \psi(\theta/\delta).
\]
If $\delta \geq (2r)^{-1}$, observe that $\varphi_{\delta,\epsilon}(x,\theta,t) \geq \varphi(x,\theta,t) \geq J(x,\theta,t)$ for all $(x,\theta,t) \in \overline B_r(x_0,0,t_0)$, with equality only at $(x_0,0,t_0)$.  Define
%Then we may lower $\delta$ until $\delta_\epsilon$ when $\varphi_{\delta_\epsilon,\epsilon}$ touches $J$ for the {\em first} time at a point $(x_\epsilon,\theta_\epsilon,t_\epsilon)\in \overline B_r \setminus \{(x,0,\theta)\}$.  Explicitly,
\[
	\delta_\epsilon 
		:= \inf \big\{\delta>0 : \text{ if } \delta' > \delta \text{ then } \varphi_{\delta',\epsilon} > J \text{ on } \overline B_r(x_0,0,t_0)\setminus\{(x_0,0,t_0)\}\big\}.
\]
Then there exists $(x_\e,\theta_\e,t_\e) \in \overline B_r(x_0,0,t_0)\setminus\{(x_0,0,t_0)\}$ such that $\varphi_{\delta_\e,\e}(x_\e,\theta_\e,t_\e) = J(x_\e,\theta_\e,t_\e)$.
\smallskip

First, we claim that $\theta_\epsilon/\delta_\epsilon \in [1/2,1]$.  Since $\phi > J$ on $\overline B_r(x_0,0,t_0)\setminus\{(x_0,0,t_0\}$ and $\psi(\theta) \geq 0$ for $\theta \in (0,1/2)\cup[2,\infty)$ it cannot be that $\theta_\epsilon/\delta_\epsilon \in (0,1/2)\cup[2,\infty)$.  We now show that $\theta_\e/\delta_\e \notin(1,2)$.  We arge by contradiction, supposing that $\theta_\epsilon/\delta_\epsilon \in (1,2)$.  Let $\theta_r := \theta_\epsilon/\delta_\epsilon$. By the construction of $\psi$ there exists $\theta_l< \theta_r$ such that $\psi(\theta_l) = \psi(\theta_r)$.  Let $\delta := \delta_\epsilon \theta_r/\theta_l$.  Notice that $\delta > \delta_\e$, which implies that $\varphi_{\delta,\e} > J$ in $\overline B_r(x_0,0,t_0)\setminus\{(x_0,0,t_0)\}$ by the definition of $\delta_\e$.  Notice also that $\theta_\epsilon / \delta = \theta_l$, which implies that $\psi(\theta_\epsilon/\delta) = \psi(\theta_l) = \psi(\theta_r) = \psi(\theta_\epsilon/\delta_\epsilon)$.  Thus, we find
\[\begin{split}
	J (x_\epsilon, \theta_\epsilon,t_\epsilon)
		&< \varphi_{\delta,\epsilon}(x_\epsilon,\theta_\epsilon,t_\epsilon)
		= \varphi(x_\epsilon,\theta_\epsilon,t_\epsilon) + \epsilon \psi(\theta_\epsilon/\delta)\\
		&= \varphi(x_\epsilon,\theta_\epsilon,t_\epsilon) + \epsilon \psi(\theta_\epsilon/\delta_\epsilon)
		= \psi_{\delta_\epsilon,\epsilon}(x_\epsilon,\theta_\epsilon,t_\epsilon)
		= J(x_\epsilon,\theta_\epsilon,t_\epsilon),
\end{split}\]
which is a contradiction.  Hence, $\theta_\epsilon/\delta_\epsilon \in [1/2,1]$, and, in particular, $\psi_\theta(\theta_\epsilon/\delta_\epsilon) \leq 0$.
\smallskip

Second we claim that %$\delta_\epsilon$ converges to $0$ and
 $(x_\e, \theta_\e,t_\e)$ converges to $(x_0,0,t_0)$ as $\e$ tends to zero.  Fix any sequence $\e_k$ tending to zero as $k$ tends to zero and extract a convergent sub-sequence, which we denote the same way, such that that $\delta_{\epsilon_k}$ converges to some $\delta_0\in [0, (2r)^{-1}]$  and $(x_{\e_k},\theta_{\e_k},t_{\e_k})$ converges to some $(x_0',\theta_0',t_0') \in \overline B_r(x_0,0,t_0)$ as $k$ tends to infinity.  %By above, we have that $\theta_{\e_k}/\delta_{\e_k} \geq 1/2$ and that $\theta_{\e_k} \leq r$.  It follows that $\theta_0' \in [\delta_0/2,r]$.  
 By continuity, we observe that
\[
	J(x_0',\theta_0',t_0')
		= \lim_{k\to\infty} J(x_{\e_k},\theta_{\e_k}, t_{\e_k})
		= \lim_{k\to\infty} \varphi_{\delta_{\e_k},\e_k}(x_{\e_k},\theta_{\e_k},t_{\e_k})
		= \varphi(x_0', \theta_0',t_0').
\]
It follows that $(x_0',\theta_0',t_0') = (x_0,0,t_0)$ because $J - \varphi$ is negative in $\overline B_r(x_0,0,t_0) \setminus \{(x_0,0,t_0)\}$.  %In addition, $0 \leq \delta_0 \leq 2\theta_0' = 0$.
Since every sequence has a sub-sequence such that $(x_{\e_k},\theta_{\e_k},t_{\e_k})$ converges to $(x_0,0,t_0)$ %and $\delta_{\e_k}$ converges to $0$
 as $k$ tends to infinity, we conclude that $(x_\e,\theta_\e,t_\e)$ converges to $(x_0,0,t_0)$ %and $\delta_\e$ converges to $0$
 as $\e$ tends to $0$.
\smallskip

%%Third, by repeating the argument in the paragraph above above along with the fact that $J - \varphi$ has a strict maximum of zero and the fact that $\delta_\epsilon \to 0$, it follows that $(x_\epsilon, \theta_\epsilon,t_\epsilon) \to (x_0,0,t_0)$.  %Since this follows easily from the fact that $\delta_\epsilon \to 0$ and the arguments used above, % using arguments which are classical in the theory of the viscosity solutions, we omit the details.  

We now verify~\eqref{eq:J} on $\R\times\{0\}\times\R^+$.  Fix $\e$ sufficiently small such that 
%If
%\[
%	0 < \epsilon < \min\{\varphi(x,\theta,t)-J(x,\theta,t) : (x,\theta,t) \in \partial B_r(x_0,0,t_0)\},
%\]
%From our argument above if $\epsilon$ is sufficiently small,
$(x_\epsilon, \theta_\epsilon,t_\epsilon) \in \overline B_r(x_0,0,t_0)$. Notice that $\theta_\e \geq \delta_\e/2>0$, $t_\e > t_0-r>0$, which implies that $(x_\e,\theta_\e,t_\e)\in \R\times\R^+\times\R^+$.  Also, notice that $(x_\epsilon,\theta_\epsilon,t_\epsilon)$ is the location of a local maximum of $J - \varphi_{\delta_\epsilon,\e}$. %Since, by assumption, $\tilde I(x_0,0,t_0) > 0$, then $\tilde I(x_\epsilon,\theta_\epsilon,t_\epsilon)>0$.
Hence, recalling that $J$ solves~\eqref{eq:J} in $\R\times\R^+\times\R^+$, at $(x_\e,\theta_\e,t_\e)$,
\[
	0 \geq (\varphi_{\delta_\epsilon,\e})_t + \overline D|(\varphi_{\delta_\epsilon,\e})_x|^2 + |(\varphi_{\delta_\epsilon,\e})_\theta|^2 + 1
		= \varphi_t + \overline D |\varphi_x|^2 + \left|\varphi_\theta + \frac{\e}{\delta} \psi_\theta\right|^2 + 1.
\]
Because $(x_\epsilon,\theta_\epsilon,t_\epsilon)$ converges to $(x_0,0,t_0)$ as $\e$ tends to zero, $\overline D(0) = 0$, and $\varphi$ is smooth, we have
\[
	\varphi_t(x_0,0,t_0) + |\varphi_\theta(x_0,0,t_0)|^2 +  \frac{2\epsilon}{\delta_\epsilon} \varphi_\theta(x_\epsilon,\theta_\epsilon,t_\epsilon) \psi_\theta(\theta_\epsilon/\delta_\epsilon) + \frac{\epsilon^2}{\delta_\epsilon^2}|\psi_\theta(\theta_\epsilon/\delta_\epsilon)|^2 + 1\leq o(1).
\]
Recall that $\varphi_\theta(x_0,0,t_0)<0$ by assumption.  Hence,  $\varphi_\theta(x_\epsilon,\theta_\epsilon,t_\epsilon)<0$ for $\e$ sufficiently small.  Using this and the fact that that $\psi_\theta(\theta_\epsilon/\delta_\epsilon)\leq 0$, we take $\e$ to zero to find that,
\[
	\varphi_t(x_0,0,t_0) + |\varphi_\theta(x_0,0,t_0)|^2 + 1\leq 0.
\]
This concludes the proof.

\end{proof}

\subsection{A representation formula for $w$}\label{sec:geometric}

Recall that $w$ satisfies \eqref{eq:w_intro} and \eqref {eq:w_0_intro}.
%\begin{equation}\label{eq:w}
%\begin{cases}
%	w_t + 2\sqrt{\overline D(\theta) |w_x|^2 + |w_\theta|^2} = 0 \quad \text{ in } \R\times (0,\infty)\times(0,\infty),\\
%	\max\{- w_\theta, w_t + 2 |w_\theta|\} \geq 0,
%		&\text{ on } \R \times\{0\} \times (0,\infty),\\
%	\min\{-w_\theta, w_t + 2|w_\theta|\} \leq 0,
%		&\text{ on } \R \times\{0\} \times (0,\infty),
%\end{cases}
%\end{equation}
%and that
%\begin{equation}\label{eq:w_0}
%	w(x,\theta,0) =
%\begin{cases}
%	1, \qquad &\text{ on } \overline G_0^c,\\
%	0, &\text{ on } G_0.
%\end{cases}
%\end{equation}
%Again, we point out that the second term in the variational inequality should have a $\overline D(0) |\tilde w_x|^2$ term as well, but this vanishes since $\overline D(0) = 0$.
Following work of Lions~\cite{Lions}, we define, for any $(x,\theta) \in \R\times\R^+$ and $p=(p_x,p_\theta) \in \R^2$, $N(x,\theta, p) %= \frac{p_x^2 + \overline{D}(\theta) p_\theta^2}{\sqrt{\overline{D}(\theta) p_x^2 + \overline{D}(\theta)^2 p_\theta^2}}
		:= \frac{1}{2} \sqrt{ p_x^2/\overline{D}(\theta) + p_\theta^2}.$ 
Then let
\[\begin{split}
	d((x,\theta),(y,\eta)):=
		\inf_{\mathcal{A}_{x,\theta,\{(y,\eta)\},1}}\int_0^1 N(\gamma, \dot\gamma)ds.
\end{split}\]
Without the boundary, it follows from~\cite[Section 3.4]{Lions} that
\begin{equation}\label{eq:w_variational}
	w(x,\theta,t) = \inf\{ w(y,\eta,0) : d((x,\theta),(y,\eta)) \leq t\}
		= \begin{cases}
			0, \qquad &\text{ if } d((x,\theta), \overline G_0) \leq t,\\
			1, &\text{ otherwise.}
		\end{cases}
\end{equation}
The modifications in our setting are straightforward, with the main difficulties handled similarly as in our treatment of $J$.  As such, we omit it.

\subsection{The proofs of \Cref{prop:geometric} and \Cref{prop:I_J}}\label{sec:inclusions}

%We show that the boundary  $\partial\{I > 0\}$ of the positivity set of $I$  is given by $\partial\{w >0\}$ and $\{J = 0\}$.
%\smallskip

%Indeed, We prove~\Cref{prop:geometric} in the following way.  For one direction, we show that, after composition with $\tanh$, $w$ is a super-solution to $I$.  This gives an upper bound on the movement of the level sets of $I$ in terms of those of $w$.  For the other direction, we show that the explicit formula given in the boundary-less setting holds in our setting.  We use this to compare the movement of the level sets of $w$ and those of a quantity $J$, defined below.  Since $J$ is a sub-solution of~\eqref{eq:I}, this gives a lower bound on the movement of the level sets of $I$ in terms of the movement of the level sets of $w$, finishing the proof.
\begin{proof}[Proof of \Cref{prop:geometric} and \Cref{prop:I_J}]
First, we claim that $\{ I > 0 \} \subset \{w = 1\}$.  To begin, we note that $w$ is a super-solution of~\eqref{eq:I} because
\[
	2 \sqrt{\overline D(\theta) p_x^2 + p_\theta^2}
		\leq \overline D(\theta) p_x^2 + p_\theta^2 + 1.
\]
Following~\cite{MajdaSouganidis}, we let $\overline I := \tanh(I)$ and observe that $\overline I$ and $w$ satisfy the same initial data.  The maximum principle implies that $\overline I \leq w$, which, in turn, gives $\{\overline I > 0\} \subset \{w > 0\} = \{w = 1\}$.  Since $\tanh$ is increasing, we have that $\{\overline I > 0 \} = \{ I > 0\}$, and thus $\{I > 0\} \subset \{w=1\}$.
\smallskip

We note that $J$ is a sub-solution of~\eqref{eq:I} satisfying the same initial conditions as $I$.  It follows that $J \leq I$.  This implies that $\{J > 0\} \subset \{I > 0\}$.
\smallskip

Now we show that $\{w =1\} \subset \{J > 0\}$.  %Once this is established, we have that $\{w = 1\} \subset \{J>0\} \subset \{I > 0 \} \subset \{w=1\}$, which implies that all three sets are equal. %Now we show that $\{I = 0\} \subset \{J \leq 0\} \subset \{w = 0\}$.
  We remark that it is known that this inclusion is not true in general for propagation problems, see the appendix of~\cite{MajdaSouganidis}.  %As such, we require a more involved proof than above, where we simply used Young's inequality.  The first thing that we point out for use later is that $J \leq I$ since it is a sub-solution to $I$.
%\smallskip
%
%
%That $J$ is a sub-solution to $I$ yields $J \leq I$, which, in turn, implies that $\{I = 0\} \subset \{J \leq 0\}$.  Next,
Fix  $(x,\theta,t)\in\R\times\R^+\times\R^+$ such that $w(x,\theta,t) =1$.  It follows that $d((x,\theta), \overline{G_0}) > t$. %It follows that $J(x,\theta,t) \leq 0$. 
%We now conclude that $\{\overline I = 0\} \supset \{w = 0\}$.  To this end, fix a point $(x,\theta,t) \in \{\overline I = 0\}$ with $t > 0$.  We note that $\overline I(x,\theta,t) = 0$ if and only if $J(x,\theta,t) \leq 0$.
Suppose that, for the sake of contradiction, $J(x,\theta,t) \leq 0$.  Let $\gamma\in\mathcal{A}_{x,\theta,G_0,1}$ be any minimizing trajectory in the formula for $J$.
% and a trajectory $\gamma$ from $(x,\theta)$ to $(y,\eta)$ appearing as a minimizer in the formula for $J$.  
Using the Cauchy-Schwarz inequality and the fact that $J(x,\theta,t) \leq 0$, we find
\[
	\sqrt t \geq \sqrt{J(x,\theta,t) + t}
		= \left(\int_0^t \left[\frac{\dot\gamma_1^2}{4\overline D(\gamma_2)} + \frac{\dot\gamma_2^2}{4}\right]ds\right)^{1/2}
		\geq \left(\int_0^t N(\gamma,\dot\gamma)ds \right) / \left(\int_0^t ds\right)^{1/2}.
\]
It follows that $\int_0^t N(\gamma,\dot\gamma)ds \leq t$.  Define the re-scaled trajectory $\tilde \gamma: [0,1]\to\R\times\R^+$ by $\tilde \gamma(s) = \gamma(st)$.  Then $\tilde \gamma(0) = (x,\theta)$ and $\tilde \gamma(1) \in \overline G_0$.  Using the definition of $d$ and then changing variables¸ yields
\[
	d((x,\theta),\overline G_0)
		\leq \int_0^1 N(\tilde \gamma, \dot{\tilde \gamma})ds
		= \int_0^t N(\gamma, \dot \gamma) ds
		\leq t.
\]
By hypothesis, $d((x,\theta),  \overline G_0) > t$, which is in contradiction to the inequality above.  It follows that $J(x,\theta,t) > 0$, and, thus, that $\{w > 0 \} \subset \{J > 0\}$.
\smallskip

Combining all inclusions above, we have that $\{J > 0\} = \{I > 0\} = \{w = 1\}$.  From \Cref{thm}, this yields the convergence of $u^\e$ to 0 in $\{w=1\}$ and  $\{J > 0\}$ in \Cref{prop:geometric} and \Cref{prop:I_J}, respectively.
\smallskip

Taking the complements of these sets and recalling that $I \geq 0$, we see that $\{J \leq 0\} = \{I = 0\} = \{w = 0\}$.  In view of \Cref{thm}, we have that $u^\e$ converges to 1 on $\Int\{w=0\}$ and $\Int \{J\leq 0\}$.  This completes the proof of \Cref{prop:geometric}.
\smallskip

To complete the proof of \Cref{prop:I_J}, we must show that $\{J < 0\} = \Int\{J \leq 0\}$.  To this end, notice that $\{J  < 0 \}$ is open, due to the continuity of $J$.  This implies that $\{J< 0 \} \subset \Int \{J \leq 0\}$.  On the other hand, fix any $(x,\theta,t) \in \Int\{J\leq 0\}$ and suppose for the sake of contradiction that $J(x,\theta,t) = 0$.  There exists $r>0$ such that $B_r(x,\theta,t) \subset \Int\{J \leq 0\}$.  It follows that $J$ has a maximum at $(x,\theta,t)$ in $B_r(x,\theta,t)$, which implies, by using the constant function $0$ as a test function, that $0 + \overline D \cdot 0^2 + 0^2 + 1 \leq 0.$ 
This is a contradiction.  Hence, $J(x,\theta,t) < 0$ and we obtain $\Int\{J\leq 0\} \subset \{J<0\}$.  We conclude that $\{J < 0\} = \Int\{J \leq 0\}$.  The proof of \Cref{prop:I_J} is now complete.
\end{proof}

\appendix
\section{Brief comments about $J$ and $w$ as a solutions of~\eqref{eq:J},~\eqref{eq:w_intro}}\label{sec:appendix}

Due to the degeneracy of \eqref{eq:J} at $\theta=0$ and the loss of coercivity of the quadratic form in the equation   as $\theta$ tends to $\infty$,~\eqref{eq:J} falls outside the classical theory of Hamilton-Jacobi equations.  In view of this, we include here some remarks that are meant to convince the reader that  $J$ and $w$ have the usual properties, that is they satisfy the dynamic programming principle, solve respectively~\eqref{eq:J} and~\eqref{eq:w_intro} in $\R\times\R^+\times\R^+$, and  their extremal paths are given by the Euler-Lagrange equations. 
Since the arguments are similar, in the remainder of the appendix we only discuss $J$.  The main observation that we establish here is that extremal paths are bounded away from $\infty$ and $0$.

\begin{lem}\label{lem:infinity}
	Suppose that  \Cref{assumption:D} and \Cref{assumption:u_0} hold, and  fix $(x,\theta,t) \in \R\times \R^+ \times\R^+$.  Let $\gamma \in H^1((0,t);\R\times\R^+)$ be a trajectory such that
	\[
			\int_0^t\left[\frac{\dot\gamma_1(s)^2}{4 \overline D(\gamma_2(s))} + \frac{\dot\gamma_2(s)^2}{4} - 1\right] ds
				\leq J(x,\theta,t) + t
	\]
	There exists $C_{x,\theta,t}$, depending only on $(x,\theta,t)$ and $\overline D$, such that, for all $s\in [0,t]$, $\gamma_2(s) \leq C_{x,\theta,t}$. % for all $s\in (0,t)$.
\end{lem}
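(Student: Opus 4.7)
\textbf{Proof proposal for \Cref{lem:infinity}.}

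The plan is to exploit the fact that the trajectory $\gamma$ starts at $\gamma(0)=(x,\theta)$ and must end in $\overline G_0$, which by \Cref{assumption:u_0} is contained in $(-\infty,x_r)\times[0,\overline\theta]$; in particular $\gamma_2(t)\in[0,\overline\theta]$. Hence if $\gamma_2$ attains a very large maximum $H:=\sup_{s\in[0,t]}\gamma_2(s)$ somewhere at an interior time $s^\star\in[0,t]$, the trajectory is forced to travel the large vertical distance $H-\theta$ on $[0,s^\star]$ and then $H-\gamma_2(t)$ on $[s^\star,t]$, and this is expensive in kinetic energy. Concretely, two applications of the Cauchy--Schwarz inequality give
\[
	\int_0^t \dot\gamma_2(s)^2 \, ds
		\geq \frac{(H-\theta)^2}{s^\star} + \frac{(H-\gamma_2(t))^2}{t-s^\star}
		\geq \frac{(2H-\theta-\overline\theta)^2}{t},
\]
the last inequality being obtained by minimizing the previous line in $s^\star\in(0,t)$.

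Next, since $\overline D\geq 0$, the term $\dot\gamma_1^2/(4\overline D(\gamma_2))$ in the action is nonnegative, so the almost-optimality hypothesis combined with the above estimate yields
\[
	J(x,\theta,t)+1 \;\geq\; \int_0^t\!\!\left[\frac{\dot\gamma_2^2}{4}-1\right]ds
		\;\geq\; \frac{(2H-\theta-\overline\theta)^2}{4t} - t.
\]
Rearranging gives
\[
	H \;\leq\; \frac{\theta+\overline\theta}{2} + \sqrt{t\bigl(J(x,\theta,t)+1+t\bigr)},
\]
which is the desired bound, provided $J(x,\theta,t)$ is itself finite and depends only on $(x,\theta,t)$.

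Thus the only remaining step is to exhibit a single admissible trajectory with finite action, which will give the needed a priori upper bound on $J(x,\theta,t)$. This is the one place requiring a little care because $\overline D$ vanishes at $\theta=0$, so the test trajectory must stay at a positive height. Fix some $(x_0,\theta_0)\in G_0$ with $\theta_0>0$ (which exists since $G_0$ is open, non-empty, and meets the interior of $\R\times[0,\infty)$), and use a piecewise linear path that first raises $\gamma_2$ from $\theta$ to $\max(\theta,\theta_0)+1$ while holding $\gamma_1$ constant, then translates $\gamma_1$ horizontally from $x$ to $x_0$ at fixed height, and finally descends to $(x_0,\theta_0)$, with the time reparametrization chosen so that each segment takes a definite fraction of $t$. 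Along this path $\gamma_2$ stays in a compact subset of $(0,\infty)$, so $\overline D(\gamma_2)$ is bounded below, and a direct calculation gives a finite upper bound on the action depending only on $(x,\theta,t)$, $\overline D$, and $G_0$. Plugging this into the previous inequality yields the bound on $H$ and completes the proof.

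The key idea is elementary Cauchy--Schwarz, and the only mild subtlety is producing the competitor trajectory that avoids the degeneracy at $\theta=0$; the analogous bound away from $0$ (in the next lemma of the paper) will require a similar but complementary argument.
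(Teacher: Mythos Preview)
Your proof is correct and follows essentially the same approach as the paper: bound $J(x,\theta,t)$ by evaluating the action on a competitor trajectory, then use Cauchy--Schwarz on $\dot\gamma_2$ to control $\sup_s\gamma_2(s)$ in terms of the kinetic energy $\int_0^t\dot\gamma_2^2\,ds$. The paper's version is marginally simpler, applying Cauchy--Schwarz once on $[0,s]$ to get $\gamma_2(s)-\theta\le 2\sqrt{t}\,\bigl(\int_0^t\dot\gamma_2^2/4\bigr)^{1/2}$, rather than your round-trip estimate through $s^\star$; conversely, your competitor construction (vertical--horizontal--vertical, keeping $\gamma_2$ away from~$0$) is more careful than the paper's one-line ``e.g.~a linear one'', which as written could have infinite action when $\theta=0$ depending on the rate at which $\overline D$ vanishes.
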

\begin{proof}
	We proceed in two steps.  First, by comparing with $\tilde \gamma$, the trajectory that connects $(x,\theta)$ and any point of $G_0$ linearly, we find $C_{x,\theta,G_0}$ depending only on $x$,$\theta$, and $G_0$, such that $J(x,\theta,t) + t \leq C_{x,\theta,G_0}t^{-1}$.%we obtain a preliminary bound on $J(x,\theta,t)$.  Fix any point $(y,\eta) \in G_0$ and let $\tilde \gamma(s) = (s/t) (y,\eta) + (1- (s/t))(x,\theta)$.  Then $\tilde \gamma$ is an admissible trajectory in the formula for $J$ and we obtain an upper bound $C_{x,\theta,G_0}$, depending only on $x$, $\theta$, $t$, and $G_0$, such that
%	\[
%		J(x,\theta,t) + t
%			\leq \int_0^t \left( \frac{|\dot{\tilde{\gamma}}_1|^2}{4\overline D(\tilde\gamma_2)} + \frac{|\dot{\tilde{\gamma}}_2|^2}{4} \right) ds
%			\leq \int_0^t \left( \frac{|x-y|^2}{t^2 4\overline D(\min\{\eta,\theta\})} + \frac{|\theta-\eta|^2}{4t^2} \right) ds
%			\leq \frac{C_{x,\theta,G_0}}{t}.
%	\]
	
Secondly, we use obtain a bound on $\gamma_2$.  Indeed, for any $s\in (0,t)$, we obtain 
	\[
		\gamma_2(s) - \theta
			= \int_0^s \dot\gamma_2(r)dr
			\leq 2\sqrt{s} \sqrt{ \int_0^s \frac{\dot\gamma_2(r)^2}{4} dr}
			\leq 2 \sqrt{t} \sqrt{J(x,\theta,t)+t}
			\leq 2\sqrt{C_{x,\theta,G_0}}.
	\]
	This concludes the proof.
\end{proof}

It follows that, for any approximately extremal trajectory $\gamma$, $\gamma_2$ is bounded.  As a result, $\overline D(\gamma_2)$ is bounded from above and the quadratic form in the integrand of $J$ is uniformly coercive.  Hence any approximately extremal trajectory will be bounded in $H^1((0,t);\R\times\R^+)$.  Using compactness we obtain an extremal trajectory, $\gamma$; however, we cannot rule out the existence of times $s \in (0,t)$ such that $\gamma_2(s) = 0$.  We summarize the above observations in the following identity: let $\overline{\mathcal{A}}_{x,\theta,G_0,t} := \{\gamma \in H^1((0,t); \R\times[0,\infty)) : \gamma(0) = (x,\theta), \gamma(t) \in \overline{G}_0\}$, then
\begin{equation}\label{eq:J_bigger_set}
\begin{split}
	J(x,\theta,t) = \min_{\mathcal{A}_{x,\theta,G_0,t}} \int_0^t \left[\frac{\dot\gamma_1(s)^2}{4 \overline D(\gamma_2(s))} + \frac{\dot\gamma_2(s)^2}{4} - 1\right] ds.
\end{split}
\end{equation}
The difference between~\eqref{eq:action} and~\eqref{eq:J_bigger_set} is that, in the latter, we allow trajectories to hit the boundary $\R\times\{0\}$.  The goal of the next lemma is to rule this out.
\begin{lem}\label{lem:origin}
		Suppose that \Cref{assumption:D} and \Cref{assumption:u_0} hold. Fix $(x,\theta,t) \in \overline G_0^c\times \R^+$ and let $\gamma \in H^1((0,t);\R\times[0,\infty))$ be a trajectory such that
	\[
			J(x,\theta,t) = \int_0^t\left[\frac{\dot\gamma_1(s)^2}{4 \overline D(\gamma_2(s))} + \frac{\dot\gamma_2(s)^2}{4} - 1\right] ds.
	\]
	\begin{enumerate}[(i)]\setlength\itemsep{-5pt}
	\item For any $\alpha \in \R$, any non-empty maximal connected component of $\{\gamma_2 < \alpha\}$ includes either $0$ or $t$ as an endpoint.  In particular, $\gamma_2$ cannot have a strict local minimum.
	\item There does not exist an non-empty interval $[\underline s, \overline s] \subset [0,t]$ on which $\gamma$ is constant.
	\item Fix any $s_0 \in [0,t]$.  Then, for all $s\in (0,s_0)$, $\gamma_2(s) > \min\{\gamma_2(s_0), \theta\}$.
	\end{enumerate}
\end{lem}
\begin{proof}[Proof of (i)]
	We proceed by contradiction, assuming that there exists $s_1, s_2 \in (0,t)$ with $s_1<s_2$, $\gamma_2(s_1) = \gamma_2(s_2) = \alpha$, and $(s_1,s_2) \subset \{\gamma_2 < \alpha\}$.  We define a new trajectory $\tilde \gamma(s) = \gamma(s) \1_{[0,s_1]\cup[s_2,t]} + (\gamma_1(s),\alpha) \1_{(s_1,s_2)}$.
%	\[
%		\tilde \gamma(s) = \begin{cases}
%			\gamma(s) \qquad &\text{ if } s \in [0,s_1]\cup[s_2,t],\\
%			(\gamma_1(s), \alpha) \qquad &\text{ if } s \in (s_1,s_2).
%		\end{cases}
%	\]
	It is clear that $\tilde \gamma\in\overline{\mathcal{A}}_{x,\theta,G_0,t}$. % \in H^1((0,t);\R\times[0,\infty))$, $\tilde\gamma(0) = (x,\theta)$, and $\tilde\gamma(t) = \gamma(t)$.
	By the monotonicity of $\overline D$, we see that $\overline D(\gamma_2(s)) \leq \overline D(\tilde\gamma_2(s))$ for all $s\in [0,t]$.  Thus, from~\eqref{eq:J_bigger_set}
\[\begin{split}
	J(x,\theta,t)
		&\leq \int_0^t \left[ \frac{\dot{\tilde\gamma}_1^2}{4 \overline D(\tilde\gamma_2)} + \frac{\dot{\tilde \gamma}_2^2}{4} - 1\right] ds
		= \int_{[0,s_1]\cup[s_2,t]} \left[ \frac{\dot{\gamma}_1^2}{4 \overline D(\gamma_2)} + \frac{\dot{ \gamma}_2^2}{4}\right] ds
		+ \int_{s_1}^{s_2} \frac{\dot{\gamma}_1(s)^2}{4 \overline D(\alpha)} ds + t\\
		&< \int_{[0,s_1]\cup[s_2,t]} \left[ \frac{\dot{\gamma}_1^2}{4 \overline D(\gamma_2)} + \frac{\dot{ \gamma}_2^2}{4}\right] ds
		+ \int_{s_1}^{s_2} \left[ \frac{\dot{\gamma}_1^2}{4 \overline D(\alpha)} + \frac{\dot\gamma_2^2}{4}\right] ds + t
%		&= \int_0^t \left[ \frac{\dot{\gamma}_1(s)^2}{4 \overline D(\alpha)} + \frac{\dot\gamma_2(s)^2}{4} - 1\right] ds
		= J(x,\theta,t).
\end{split}\]
The strict inequality comes from the fact that $\gamma_2(s) < \alpha$ for all $s\in (s_1,s_2)$ and $\gamma_2(s_1) = \gamma_2(s_2) = \alpha$.  This is a contradiction, concluding the proof of claim (i).
\end{proof}
\begin{proof}[Proof of (ii)]
We proceed by contradiction.  Suppose that $\gamma$ is constant on $[\underline s, \overline s]$ for $0 \leq \underline s < \overline s \leq t$.  For the ease of notation, assume that $\overline s = t$, but the general case is handled similarly.  Define $\tilde \gamma(s)
		= ( \gamma_1(s\underline{s}/t)),
			 \gamma_2(s\underline{s}/t)).$
We notice that $\tilde \gamma \in \overline{\mathcal{A}}_{x,\theta,G_0,t}$. %$\tilde \gamma \in H^1((0,t);\R\times[0,\infty))$, $\tilde\gamma(0) = (x,\theta)$, and $\tilde\gamma(t) = \gamma(t)$.  Thus, using that $\tilde \gamma$ is an admissible trajectory in~\eqref{eq:J_bigger_set},
Thus, from~\eqref{eq:J_bigger_set},
\[\begin{split}
	J(x,&\theta,t) + t
		\leq \int_0^t \left[ \frac{\dot{\tilde \gamma}_1^2}{4\overline D(\tilde\gamma_2)} + \frac{\dot{\tilde\gamma}_2^2}{4} \right] ds
		= \left(\frac{\underline{s}}{t}\right)^2\int_0^t \left[\frac{\dot{\gamma}_1(s\underline{s}/t)^2}{4 \overline D(\gamma_2(s\underline{s}/t))} + \frac{\dot{\gamma}_2(s\underline{s}/t)^2}{4} \right] ds\\
		&= \frac{\underline{s}}{t}\int_0^{\underline s}\left[ \frac{\dot{\gamma}_1(s)^2}{4 \overline D(\gamma_2(s))} + \frac{\dot{\gamma}_2(s)^2}{4} \right] ds
		= \frac{\underline{s}}{t}\int_{0}^t\left[ \frac{\dot{\gamma}_1^2}{4 \overline D(\gamma_2)} + \frac{\dot{\gamma}_2^2}{4} \right] ds
		= \frac{\underline{s}}{t}\left( J(x,\theta,t) + t\right).
\end{split}\]
By assumption, $\underline{s} < t$.  Hence, $J(x,\theta,t) + t = 0$, which in turn implies that $\dot \gamma \equiv 0$.  This is a contradiction because $\gamma(0) \in \overline G_0^c$ and $\gamma(t) \in \overline G_0$.  This concludes the proof of claim (ii).
\end{proof}
\begin{proof}[Proof of (iii)]
We proceed by contradiction.  Suppose that there exists $s_0\in[0,t]$ and $s_1 \in (0, s_0)$ such that $\gamma_2(s_1) \leq \min\{\gamma(s_0),\theta\}$.  We assume that $\min\{\gamma(s_0),\theta\} = \gamma(s_0)$, but the argument is similar in the other case.
\smallskip

\sloppy We first consider the case when $\gamma_2(s_0) >0$.  If $\min_{s\in[0,s_0]} \gamma_2(s) < \gamma_2(s_0)$, fix any $\alpha \in (\min_{s\in[0,s_0]}\gamma_2(s), \gamma_2(s_0))$.  Applying part (i), we obtain a contradiction since $\{\gamma_2 < \alpha\}$ must have a connected component contained in $(0,s_0)$ which does not contain $0$ as an endpoint.
\smallskip

It follows that $\gamma_2(s_1) = \gamma_2(s_0)$ and that $\gamma_2(s) \geq \gamma_2(s_0)$ for all $s\in[0,s_0]$.  If $\max_{[0,s_1]} \gamma_2, \max_{[s_1,s_0]} \gamma_2 > \gamma_2(s_0)$, % $\gamma_2(s_\ell) > \gamma(s_0)$ for some $s_\ell\in (s_1,s_0)$, 
we can argue exactly as above, with the choice $\alpha \in (\gamma_2(s_0), \min\{\max_{[0,s_1]} \gamma_2, \max_{[s_1,s_0]} \gamma_2\})$, to obtain a contradiction via part (i).  Hence, we consider only the case that $\gamma_2(s) = \gamma_2(s_0)$ for all $s\in [s_1,s_0]$, though the case $\gamma_2(s) = \gamma_2(s_0)$ for all $s \in [0,s_1]$ follows similarly.   By part (ii), it must be that $\{s \in (s_1,s_0) : \dot \gamma_1(s)\neq0\}$ has positive measure.  Fix $\e>0$ to be determined, let $T_\e(s) = \e((s_2-s_1) - |2s - (s_2+s_1)|)$, and define the trajectory $\tilde \gamma(s) = \gamma(s) + (0, T_\e(s))\1_{[s_1,s_1]}(s)$. 
%\[
%	\tilde \gamma(s)
%		= \begin{cases}
%			(\gamma_1(s),\gamma_2(s_0) + T_\e(s)) \qquad &\text{ if } s \in [s_1,s_0],\\
%			\gamma(s) \qquad &\text{ if } s \in [0,s_1]\cup[s_0,t].
%		\end{cases}
%\]
It is clear that $\tilde \gamma \in \mathcal{A}_{x,\theta,G_0,t}$. %$\tilde \gamma \in H^1((0,t);\R\times[0,\infty))$, $\tilde\gamma(0) = (x,\theta)$, and $\tilde\gamma(t) = \gamma(t)$. Hence, $\tilde \gamma$ is an admissible trajectory in~\eqref{eq:J_bigger_set}.
Using first that $\overline D(\theta) = \theta^p$ and a Taylor expansion and then that $\gamma_2\equiv  \gamma_2(s_0)$ in $(s_1,s_0)$, we find, from~\eqref{eq:J_bigger_set},
\[\begin{split}
	&J(x,\theta,t)+t
		\leq \int_0^t \left[ \frac{\dot{\tilde\gamma}_1(s)^2}{4 \overline D(\tilde\gamma_2(s))} + \frac{\dot{\tilde \gamma}_2(s)^2}{4}\right] ds\\
%		&= \int_{s_1}^{s_0} \left[ \frac{\dot{\gamma}_1(s)^2}{4 \overline D(\gamma_2(s_0) + T_\e(s))} + \frac{4\e^2}{4}\right] ds
%		+ \int_{[0,t]\setminus[s_1,s_0]}  \left[ \frac{\dot{\gamma}_1(s)^2}{4 \overline D(\gamma_2(s))} + \frac{\dot{ \gamma}_2(s)^2}{4}\right] ds\\
		&= \int_{s_1}^{s_0} \left[ \frac{\dot{\gamma}_1(s)^2 (1 - p\gamma_2(s_0)^{p-1} T_\e(s)) + O(\e^2))}{4 \overline D(\gamma_2(s_0))} + \e^2\right] ds + \int_{[0,t]\setminus[s_1,s_0]}  \left[ \frac{\dot{\gamma}_1(s)^2}{4 \overline D(\gamma_2(s))} + \frac{\dot{ \gamma}_2(s)^2}{4}\right] ds\\
%	J(x,&\theta,t) + t
%		\leq -\int_{s_1}^{s_0} \left[ \frac{\dot{\gamma}_1(s)^2(p\gamma_2(s_0)^{p-1} T_\e(s)))}{4 \overline D(\gamma_2(s_0))}  + O(\e^2)\right] ds
%		 + \int_{0}^{t}  \left[ \frac{\dot{\gamma}_1(s)^2}{4 \overline D(\gamma_2(s))} + \frac{\dot{ \gamma}_2(s)^2}{4}\right] ds\\
		&= -\int_{s_1}^{s_0} \left[ \frac{\dot{\gamma}_1(s)^2(p\gamma_2(s_0)^{p-1} T_\e(s)))}{4 \overline D(\gamma_2(s_0))}  + O(\e^2)\right] ds
		 + J(x,\theta,t) + t.
\end{split}\]
Using the explicit form of $T_\e$ and that $\{s \in (0,s_1): \dot\gamma_1(s) \neq 0 \}$ has positive measure, the first term on the last line is negative when $\e$ is sufficiently small.  The above then simplifies to $J(x,\theta,t) < J(x,\theta,t)$, which is a contradiction.
\smallskip

Under the assumption that $\gamma_2(s_0) >0$, we have examined all cases and obtained a contradiction in each one.  We conclude that, when $\gamma_2(s_0)>0$, the claim holds.
\smallskip

Suppose that $\gamma_2(s_0) = 0$.  %Proceeding by contradiction, suppose that there exists $\underline s \in (0, s_0)$ such that $\gamma_2(\underline s) = 0$.  
By applying part (i) with $\alpha$ tending to zero, we find $\gamma_2(s) = 0$ for all $s \in I_{s_1}$, where $I_{s_1}$ is either $[0,s_1]$ or $[s_1,t]$.  Since $\overline D(\gamma_2(s)) = 0$ for all $s \in I_{s_1}$, it follows that $\dot\gamma_2(s) = 0$ for all $s \in I_{s_1}$, otherwise $J$ would be infinite.  Thus, $\gamma$ is constant on $I_{s_1}$, which contradicts part (ii).   This concludes the proof.
%We conclude that $\gamma_2(s) >0$ for all $s\in(0, s_0)$.  Hence, for all $s \in (0,s_0)$
%\[
%	\gamma_2(s) > 0 = \min\{\gamma_2(s_0),\theta\},
%\]
%which concludes the proof.
\end{proof}

Since extremal trajectories remain bounded away from zero, they do not ``see'' the boundary.  Hence  the standard theory of Hamilton-Jacobi equations applies  showing that $J$ solves~\eqref{eq:J} and has all the expected properties.

%For each fixed $\epsilon$, there exists a unique $\delta_\epsilon$ such that $\phi_{\delta_\epsilon,\epsilon}$ touches 

\section{The precise location of the front}\label{sec:front_location}

What follows is a somewhat informal discussion of how to compute and prove the precise asymptotics of the front location in~\eqref{eq:cane_toads_unscaled} when the initial data has compact support.  We first discuss how to ``guess'' the asymptotics in terms of an abstract representation formula using the limiting equation~\eqref{eq:I}.  Second, we outline the main modifications to the work in~\cite{BHR_acceleration} in order to prove this abstract guess. Finally, we compute an explicit value for this guess from the abstract formula.  The work below is not rigorous, but it is a simple exercise to turn this discussion into a proof.

\subsubsection*{Connecting the front location with the Hamilton-Jacobi equation}

We make precise what we mean by ``front'' in this context.  For a solution $u$ of~\eqref{eq:cane_toads_unscaled}, we refer to the region where $x>0$ and $\max_\theta u(x,\theta,t)$ transitions from $1$ to $0$ as the front, see \Cref{fig:front_location}.  As we shall see, up to lower order terms, it is enough to fix any $m\in (0,1)$ and track the level set of $u$ of height $m$; that is we may define the front as $\max \{ x: \exists \theta>0, u(x,\theta,t) = m\}$, cf.~\cite[Section 1]{BHR_acceleration}.
\smallskip

We discuss, heuristically, that the front location corresponds to the location of the boundary of the zero level set of $I$ when $G_0 = \{(0,0)\}$.  We do this by noting of $\{J = 0\} = \partial \{I = 0\}$ (see \Cref{sec:relationship}), and using $J$ for all computations.  Due to the fact that the assumption $G_0 = \{(0,0)\}$ falls outside the scope of this paper (cf.~\Cref{assumption:u_0}), all discussion in this subsection is not rigorous; however, we discuss below how to make it rigorous (see the next subsection).
 %for the limiting problem~\eqref{eq:I} when the initial data satisfies~\eqref{eq:I_0_intro} with $G_0 = \{(0,0)\}$, which, in view of \Cref{sec:relationship}, can be found by solving for the zero level set of $J$, where $J$ is defined by~\eqref{eq:action} and where the $G_0$ in~\eqref{eq:action} is given by $G_0 = \{(0,0)\}$.  Due to the fact that the assumption $G_0 = \{(0,0)\}$ falls outside the scope of this paper (cf.~\Cref{assumption:u_0}), all discussion in this subsection is not rigorous; however, we discuss below how to make it rigorous (see the next subsection).
\smallskip

Roughly, to see the connection between the solution $u$ of~\eqref{eq:cane_toads_unscaled} with the function $J$ of~\eqref{eq:action} we proceed as follows.  Fix $t>0$ and let $\e_t = t^{-1}$.  For any $(x,\theta,s) \in \R\times \R^+ \times [0,\infty)$, define
\[
	u^{\e_t}(x,\theta,s) = u\left(\frac{x\sqrt{D(1/\e_t)}}{\e_t}, \frac{\theta}{\e_t}, \frac{s}{\e_t }\right)
		\quad\text{ and } \quad
	v^{\e_t} = -\e_t \log u^{\e_t}.
\]
From \Cref{prop:I_J}, we expect that, as $t$ tends to infinity, and thus $\e_t$ tends to zero,
\begin{equation}\label{eq:u_to_J}
	u(x t \sqrt{D(t)}, \theta t, st)
		= u^{\e_t}(x,\theta, s)
		\to \begin{cases}
			1, \qquad &\text{ if } J(x,\theta,s) < 0,\\
			0, &\text{ if } J(x,\theta,s) > 0,
		\end{cases}
\end{equation}
and that $v^{\e_t}$ converges to $J$, where $J$ is given by~\eqref{eq:action} with the set $G_0$ to be determined.  Since $u(\cdot,\cdot,0)$ has compact support, it follows that $u^{\e_t}(\cdot,\cdot,0)$ tends to zero locally uniformly on $\{(0,0)\}^c$ and that $u^{\e_t}(0,0,0)$ is positive.  Heuristically, we then expect $v^{\e_t}(\cdot,\cdot,0)$ to converge to $0$ on $\{(0,0)\}$ and $\infty$ on $\{(0,0)\}^c$.  This, in view of the convergence of $v^{\e_t}$ to  $J$, suggests that, in the definition of $J$~\eqref{eq:action}, we should take $G_0 = \{(0,0)\}$.
\smallskip

Let
\begin{equation}\label{eq:x_f}
	x_f
		:= \max \left\{ x: \exists \theta > 0, 
 J(x,\theta,1) = 0\right\}
 		= \max \left\{ x : \min_\theta J(x,\theta,1) = 0\right\}.
\end{equation}
The second equality above is easy to check by hand.  It is also easy to observe that $|x| < x_f$ implies that $\max_\theta J(x,\theta,1) = 1$ and $|x| > x_f$ implies that $\max_\theta J(x,\theta,1) < 0$.  Returning to~\eqref{eq:u_to_J} and setting $s=1$, we expect that as $t$ tends to $\infty$, if $|x| > x_f$ then $u(xt \sqrt{D(t)}, \theta t, t)$ converges to $0$, while if $|x| < x_f$ then $u(xt \sqrt{D(t)}, \theta t, t)$ converges to $1$ for some $\theta$.  This suggests that the front location is given by
\begin{equation}\label{eq:front_location}
	x_f t \sqrt{ D(t)} + o( t \sqrt{D(t)}).
\end{equation}

\subsubsection*{How to make this rigorous}

There are two approaches that one may use to make~\eqref{eq:front_location} rigorous.  The first is to develop the theory of ``maximal solutions'' to accomodate cases such as ours, where $G_0$ is not a smooth open set, but, instead, a one-point set.  The second is to re-use the approach of~\cite{BHR_acceleration}, in our context.  For simplicity, we discuss the second approach now.
\smallskip

A slightly stronger assumption than \Cref{assumption:D} is that there exists a $C^1$ function $F: \R^+ \to \R^+$ and a real number $p>0$ such that $D(\theta)/F(\theta)$ converges to $1$ 
%, $\log F(\theta)/ \log \theta$ converges to $p$, and $F'(\theta)/F(\theta)$ converges to zero as $\theta$ tends to infinity
and $\theta \partial_\theta \log F$ converges to $p$ as $\theta $ tends to $\infty$.
This is satisfied by the example~\eqref{eq:D_example}, with the choice $F(\theta) = \theta \log(\theta+1)$.  Under this hypothesis, the strategy from~\cite{BHR_acceleration}, may be repeated with the following minimal adaptation.
\smallskip

We first discuss the proof that the front location is bounded below by~\eqref{eq:front_location}.  Let $x_f$ be as above and define $\theta_f$ to be a point such that $J(x_f,\theta_f,1) = \min_\theta J(x_f,\theta,1)=0$.  The lower bound in~\cite{BHR_acceleration} is obtained by building a sub-solution along moving, growing ellipses.  The major difficulty in adapting this strategy in our context is identifying the correct trajectory for the ellipse to follow.  Let $(X,\Theta)$ be the optimal trajectory in the definition of $J$~\eqref{eq:action} beginning at $(X(0),\Theta(0)) = (0,0)$ and ending at $(X(1),\Theta(1)) = (x_f,\theta_f)$. %to the front location $(x_f,\theta_f)$ in~\eqref{eq:front_location}; that is, $(x_f,\theta_f) \in \R \times \R^+$ such that $J(x_f, \theta_f, 1) = \min_\theta J(x_f, \theta, 1) = 0$ and $J(x, \theta, 1) > 0$ for all $x > x_f$ and $\gamma$ is the minimizing path in the definition of $J$ such that $\gamma(0) = (0,0)$ and $\gamma(1) = (x_f,\theta_f)$.
Instead of using the trajectories in~\cite[equation~(4.9)]{BHR_acceleration}, we define, for any large time $T>0$, and use the trajectory $\gamma_{T}(t) = (X_{T}(t), \Theta_{T}(t))$ where $(X_{T},\Theta_{T})$ satisfy
\[
	\Theta_{T}(t) = T \Theta(t/T),
		\quad\text{ and } \quad
	X_{T}(t) = \int_0^t \sqrt{\frac{F(\Theta_{T}(s))}{\Theta(s/T)^p}} \dot X(s/T) ds.
\]
From our assumptions, we see that $F(\Theta_{T}) \sim D(\Theta_T) \sim \Theta^p D(T)$, so that $X_{T}(T) \sim x_f T \sqrt{D(T)}$.  In the case considered in~\cite{BHR_acceleration}, $F(\theta) = \theta$ and the trajectories above are exactly those used in~\cite{BHR_acceleration}.%, this is exactly the trajectory used.
%Up to the ``$(1-\epsilon)$'' and ``$+H$'' factors in~\cite[equation (4.9)]{BHR_acceleration}, this 
\smallskip

Once the trajectories have been determined, one may complete the proof of the lower bound exactly as in~\cite{BHR_acceleration} with all further modifications straightforward.  The reason for the additional assumptions on the regularity of $F$ can be seen in the hypotheses of~\cite[Lemma~4.1]{BHR_acceleration}.  This yields, for all $m \in (0,1)$,
\[
	\liminf_{t\to\infty} \frac{\max\{x \in \R : \exists \theta>0, u(x,\theta,t) \geq m\}}{t \sqrt{D(t)}} \geq x_f.
\]
\smallskip

On the other hand, an upper bound may be easily obtained using the Hamilton-Jacobi set-up (see, for a similar argument,~\cite{CalvezHendersonMirrahimiTuranova}).  We note that the explicit upper bound in~\cite{BHR_acceleration} cannot be used here since it is a particularity of the case $D(\theta) = \theta$.  We conclude that, for all $\overline x > x_f$.
\[
	\lim_{t\to\infty} \sup_{x \geq \overline x t \sqrt{D(t)}, \theta > 0} u(x, \theta, t) = 0.
\]
In particular, this implies that
\[
	\lim_{t\to\infty} \frac{\max\{x \in \R : \exists \theta>0, u(x,\theta,t) = m\}}{t \sqrt{D(t)}} = x_f.
\]
and we conclude that the front location is given by~\eqref{eq:front_location}, as claimed.

\subsubsection*{Computing $x_f$ using~\eqref{eq:x_f}}

We now compute $x_f$ explicitly.  We have the Hamiltonian system for $(X,\Theta, P, Q)$:
\begin{equation}\label{eq:Hamiltonian_system}
	\dot X = 2 P \overline D, \quad
	\dot \Theta = 2 Q,
		\qquad\text{ and }\qquad
	\dot P = 0, \quad
	\dot Q = - \overline D' P^2,
\end{equation}
with the boundary conditions $(X(0),\Theta(0)) = (0,0)$ and $(X(1), \Theta(1)) = (x_f,\theta_f)$.  We see that $P = A/2$ for some constant $A$ depending on $(x_f,\theta_f)$.  Next, we differentiate the equation for $\Theta$ to get that $\ddot \Theta = 2 \dot Q = - \frac{A^2}{2} \overline D'(\Theta).$ 
%\begin{equation}\label{eq:ddot_Theta}
%	\ddot \Theta = 2 \dot Q = - \frac{A^2}{2} \overline D'(\Theta).
%\end{equation}
Multiplying this by $\dot \Theta$ and integrating, we find
\begin{equation}\label{eq:dot_Theta^2}
	\dot\Theta(s)^2 = \dot\Theta(0)^2 - A^2 \overline D(\Theta(s)).
\end{equation}
Further, from~\eqref{eq:Hamiltonian_system}, we see that $\dot X = A \overline D(\Theta)$.  Hence, we have that
\[
	0 = J(x_f,\theta_f,1)
		= \int_0^1 \left( \frac{\dot X^2}{4\overline D(\Theta)} + \frac{\dot \Theta^2}{4} - 1 \right) ds
		= \int_0^1 \left(\frac{A^2 \overline D(\Theta)}{4} + \frac{\dot\Theta(0)^2 - A^2 \overline D(\Theta)}{4}  -1 \right) ds,
\]
which implies that $\dot\Theta(0) = 2$.
\smallskip

To compute $A$, we need the following key observation:
\begin{lemma}\label{lem:increasing}
	Suppose that $(X,\Theta)$ is the minimizing trajectory given above.   Then $\dot \Theta(s) > 0$ for all $s \in [0,1)$ and $\dot \Theta(1) = 0$.
\end{lemma}
Heuristically this is because any downward motion in $\Theta$ could be used instead to increase $X$ for the same ``cost.''  The details of the proof can be seen in~\cite{CalvezHendersonMirrahimiTuranova}.
From \Cref{lem:increasing} and~\eqref{eq:dot_Theta^2}, we find $	\dot\Theta(s)(4 - A^2 \overline D(\Theta))^{-1/2} = 1.$
Using that $\overline D(\theta) = \theta^p$ and integrating, we see that
\begin{equation}\label{eq:theta_hypergeometric}
	\Theta(s) = \frac{1}{\mu} F_p^{-1}(2 \mu s),
\end{equation}
where $\mu = 2^{-2/p} A^{2/p}$ and $F_p(s) = \int_0^s (1 - r^p)^{-1/2} dr$.
%\[F_p(s)
%		:= \int_0^s \frac{dr}{\sqrt{1 - r^p}}.
%		%= s\ _2\hspace{-.01in}F_1\left( \frac{1}{2}, \frac{1}{p}, 1 + \frac{1}{p}, s^p\right).
%\]

Using~\eqref{eq:theta_hypergeometric} along with the condition $\dot\Theta(1) = 0$, we see that $0 = \dot \Theta(1)^2 = 4 - A^2 \mu^{-p} (F_p^{-1}(2\mu))^p$.
%\[
%	0
%		= \dot \Theta(1)^2
%		= 4 - A^2 \mu^{-p} \left( F_p^{-1}(2\mu)\right)^p.
%\]
Re-arranging this and using the formula for $\mu$, this yields $1 = F_p^{-1}(2\mu)$, which can be re-written $2\mu = F_p(1)$.
%\[
%	1 = F_p^{-1}(2\mu)
%		\qquad \text{ or }
%		\qquad
%%\frac{A^{2/p}}{2^{2/p - 1}}=
%	2 \mu
%		= F_p(1).
%		%= _2\hspace{-.05in}F_1\left(\frac12,\frac1p,1+\frac1p, 1\right)
%		%= \sqrt \pi \frac{\Gamma(1 + 1/p)}{\Gamma(1/2 + 1/p)}
%	 	%=: \beta,
%\]
To compute $F_p(1)$, we use the easy-to-establish identities
\begin{equation}\label{eq:integral1}
	\int_0^1 \frac{r^p}{\sqrt{1-r^p}}dr
		= \frac{2}{p} \int_0^1 \sqrt{1 - r^p}\ dr,
\end{equation}
and
\begin{equation}\label{eq:integral2}
	\int_0^1 \sqrt{1 - r^p}\ dr
		= \frac{1}{p}\int_0^1 r^{\frac1p -1}\sqrt{1-r}\ dr
		= \frac{1}{p} \beta\left(\frac1p,\frac32\right),
\end{equation}
where $\beta$ is the beta function, see~\cite[Section 6.2]{AbramowitzStegun}.  Using~\eqref{eq:integral1} and~\eqref{eq:integral2} yields
\begin{equation}\label{eq:F_p}
\begin{split}
	F_p(1)
		&= \int_0^1 \frac{1 - r^p + r^p}{\sqrt{1-r^p}} dr
		%= \int_0^1 \left(\sqrt{1-r^p} + \frac{r^p}{\sqrt{1-r^p}} \right) dr
		= \frac{p + 2}{p} \int_0^1 \sqrt{1- r^p}\ dr
		= \frac{p+2}{p^2} \beta\left(\frac1p,\frac32\right).
%		&= \int_0^1 \sqrt{1-r^p}\ dr - \frac{2}{p} \int_0^1 r \partial_r \left(\sqrt{1-r^p}\right) dr
%		= \frac{p + 2}{p} \int_0^1 \sqrt{1-r^p}dr\\
%		&= \frac{p + 2}{p^2}\int_0^1 r^{\frac1p -1}\sqrt{1-r}\ dr
%		= \frac{p + 2}{p^2} \beta\left(\frac1p,\frac32\right)
%		= \frac{p+2}{p^2} \frac{\Gamma\left(\frac32\right)  \Gamma\left(\frac1p\right)}{\Gamma\left(\frac32 + \frac1p\right)}\\
%		&= \frac{p+2}{2p} \frac{\Gamma\left(\frac12\right)  \Gamma\left(1+\frac1p\right)}{\left(\frac12 + \frac1p\right)\Gamma\left(\frac12 + \frac1p\right)}
%		= \sqrt \pi \frac{\Gamma\left( 1 + \frac1p\right)}{\Gamma\left(\frac12 + \frac1p\right)},
\end{split}
\end{equation}
%where $\beta$ is the beta function and we used the well-known identities $\beta(x,y) = \Gamma(x)\Gamma(y)/\Gamma(x+y)$, $\Gamma(x+1) = x \Gamma(x)$, and $\Gamma(1/2) = \sqrt\pi$.
%Here $_2\hspace{-.01in}F_1$ is the hypergeometric function.  This is not important except in computing $F_p(1)$, below, where it has a closed form.
%where the second-to-last equality can be found in, e.g.~\cite[equation 15.1.20]{AbramowitzStegun}.  %Here we used that $\Gamma(1/2) = \sqrt\pi$.  
%This implies $\mu = \beta/2$, which, in turn, yields $A = 2 \mu^{p/2} = 2^{1 - p/2} \beta^{p/2}$.
Thus, we have a formula for $\mu$, which, in turn, yields a formula for $A = 2\mu^{p/2}$.

Having computed $A$, we are now in a position to conclude.  Indeed, $\dot X
		= A \overline D(\Theta)
		= \frac{A}{\mu^p} \left( F_p^{-1}(2\mu s)\right)^p
		= \frac{4}{A} \left( F_p^{-1}(2\mu s)\right)^p.$
%\[
%	\dot X
%		= A \overline D(\Theta)
%		= \frac{A}{\mu^p} \left( F_p^{-1}(2\mu s)\right)^p
%		= \frac{4}{A} \left( F_p^{-1}(2\mu s)\right)^p.
%\]
Using~\eqref{eq:integral1} and~\eqref{eq:integral2}, we find
\[\begin{split}
	x_f &= X(1)
		= \frac{4}{A}\int_0^1 F_p^{-1}(2\mu s)^p ds
		= \frac{4}{AF_p(1)} \int_0^1 \frac{r^p}{\sqrt{1 - r^p}} dr
		= \frac{4}{A F_p(1)} \frac{2}{p^2} \beta\left(\frac1p,\frac32\right)
		= \frac{8}{A}\frac{1}{p+2}.
%		&= \frac{4}{A\beta} \frac{2\ _2 F_1\left(\frac12, \frac1p, 1 + \frac1p, 1\right)}{p+2}
%		= \frac{8}{p+2} \frac{1}{A},
\end{split}
\]
The third equality comes from the change of variables $r = F_p^{-1}(2\mu s)$.% and the last inequality comes from the fact that $_2 F_1(1/2, 1/p, 1 + 1/p, 1) = \beta$, by definition of $\beta$.% and we use that
%\[
%	\partial_s F_p^{-1}(\beta s)
%		= \frac{1}{\beta F_p'(F_p^{-1}(\beta s))}
%		= \frac{1}{\beta \sqrt{1 - F_p^{-1}(\beta s)^p}},
%\]
%by the definition of $F_p$.

Plugging in for $A$, we have
\begin{equation}\label{eq:c316}
	x_f
		= \frac{8}{2+p}\frac{1}{A}
		= \frac{8}{2+p} \left(2^{1- p/2} \frac{p+2}{p^2}\beta\left(\frac1p,\frac32\right)\right)^{-1}.
\end{equation}
We use the well-known identities $\beta(x,y) = \Gamma(x)\Gamma(y)/\Gamma(x+y)$, $\Gamma(x+1) = x\Gamma(x)$, and $\Gamma(1/2) = \sqrt \pi$ (see~\cite[Section 6]{AbramowitzStegun}), to simplify the expression involving $\beta$.  Indeed,
\[\begin{split}
	\frac{p + 2}{p^2} \beta\left(\frac1p,\frac32\right)
		&= \frac{p+2}{p^2} \frac{\Gamma\left(\frac32\right)  \Gamma\left(\frac1p\right)}{\Gamma\left(\frac32 + \frac1p\right)}
		= \frac{p+2}{2p} \frac{\Gamma\left(\frac12\right)  \Gamma\left(1+\frac1p\right)}{\left(\frac12 + \frac1p\right)\Gamma\left(\frac12 + \frac1p\right)}
		= \sqrt \pi \frac{\Gamma\left( 1 + \frac1p\right)}{\Gamma\left(\frac12 + \frac1p\right)}.
		%= \frac{8}{2 + p} \frac{\Gamma\left(\frac{1}{2} + \frac{1}{p}\right)^{p/2}}{2^{1 - p/2} \pi^{p/4} \Gamma\left(1 + \frac{1}{p}\right)^{p/2}}.
\end{split}\]
Combining this with~\eqref{eq:c316}, yields the desired expression~\eqref{eq:front_location_formula}.
%\[
%	x_f
%	 = \frac{8}{2 + p} \frac{\Gamma\left(\frac{1}{2} + \frac{1}{p}\right)^{p/2}}{2^{1 - p/2} \pi^{p/4} \Gamma\left(1 + \frac{1}{p}\right)^{p/2}}.
%\]

% 
%\bibliographystyle{abbrv}%{{{1
%\bibliography{../refs.bib}

\begin{thebibliography}{SK}

%% Use the widest label as parameter.

%% Reference items may be numbered or have labels of your choice.
%% The author's surname PRECEDES the initial of the first name.
%% The surnames are set in small caps.
%% Add \& before the last author's surname.
%% Book titles and journal names are italicized. 
%% In book titles, first letters are capitalized.
%% Only journal volume numbers are boldfaced.
%% The issue number is only given when the issues are paginated separately.

%%%%%%%%%%% To ease editing, use normal size:

\normalsize
\baselineskip=17pt

%%%%%%%%%%%%%
%
%\bibitem[B]{Barlow} 
%\textsc{Barlow, M. T.}
%Diffusions on fractals.  
%In: \emph{Lectures on Probability Theory and Statistics} (Saint-Flour, 1995), 
%Lecture Notes in Math. 1690, Springer, Berlin (1998), 1--121. 
%
%
%\bibitem[G]{Gratzer} 
%\textsc{Gr\"atzer, G.}
%\emph{More Math into \LaTeX.}
%4th ed., Springer, Berlin (2007).
%
%
%\bibitem[SK]{SatoKashiwaraKawai}
%\textsc{Sato, M., Kashiwara, M., \& Kawai, M.} 
%Linear differential equations of infinite order and theta functions.
%\emph{Adv. Math.} \textbf{47} (1983), 300--325.
%
%
%\bibitem[Sh]{Shchepin}
%\textsc{Shchepin, E. V.}
%On mappings of the two-dimensional sphere.  
%\emph{Uspekhi Mat. Nauk} \textbf{58} (2003), no.~2, 169--170 (in Russian); 
%English transl.: \emph{Russian Math. Surveys} \textbf{58} (2003), 1218--1219. 
%
%\bibitem[S]{Smith} 
%\textsc{Smith, J.}
%A new upper bound on the cross number.
%arXiv:2056.7895.
%
%
%\bibitem[V]{Verkaar}
%\textsc{Verkaar, M.}
%Continuous local martingales and stochastic integration in Banach spaces.
%PhD thesis, Univ. of Helsinki (2001).

\bibitem{AbramowitzStegun}
M.~Abramowitz and I.~A. Stegun.
\newblock {\em Handbook of mathematical functions with formulas, graphs, and
  mathematical tables}, volume~55 of {\em National Bureau of Standards Applied
  Mathematics Series}.
%\newblock For sale by the Superintendent of Documents, U.S. Government Printing Office, Washington, D.C., 1964.

\bibitem{ArnoldDesvillettesPrevost}
Arnold, A., Desvillettes, L., and Pr{\'e}vost, C.
\newblock Existence of nontrivial steady states for populations structured with
  respect to space and a continuous trait.
\newblock {\em Commun. Pure Appl. Anal.}, 11(1):83--96, 2012.

\bibitem{Barles}
Barles, G.
\newblock Nonlinear {N}eumann boundary conditions for quasilinear degenerate
  elliptic equations and applications.
\newblock {\em J. Differential Equations}, 154(1):191--224, 1999.

\bibitem{BarlesEvansSouganidis}
Barles, G., Evans, L.~C., and Souganidis, P.~E.
\newblock Wavefront propagation for reaction-diffusion systems of {PDE}.
\newblock {\em Duke Math. J.}, 61(3):835--858, 1990.

\bibitem{BenichouEtAl}
B\'enichou, O., Calvez, V., Meunier, N., and Voituriez, R.
\newblock Front acceleration by dynamic selection in fisher population waves.
\newblock {\em Phys. Rev. E}, 86:041908, 2012.

\bibitem{BerestyckiMouhotRaoul}
Berestycki, N., Mouhot, C,.~and Raoul, G.
\newblock Existence of self-accelerating fronts for a non-local
  reaction-diffusion equations.
\newblock {\em http://arxiv.org/abs/1512.00903}.

\bibitem{BouinCalvez}
Bouin, E.~and V.~Calvez, V.
\newblock Travelling waves for the cane toads equation with bounded traits.
\newblock {\em Nonlinearity}, 27(9):2233--2253, 2014.

\bibitem{BCMetal}
Bouin, E., Calvez, V., Meunier, N., Mirrahimi, S., Perthame, B., Raoul, G., and
 Voituriez, R.
\newblock Invasion fronts with variable motility: phenotype selection, spatial
  sorting and wave acceleration.
\newblock {\em C. R. Math. Acad. Sci. Paris}, 350(15-16):761--766, 2012.

\bibitem{BCHK}
Bouin, E., Chan, M., Henderson, C., and Kim, P.
\newblock Influence of a mortality trade-off on the spreading rate of cane
  toads fronts.
\newblock {\em Preprint}, 2017.
\newblock https://arxiv.org/abs/1702.00179.

\bibitem{BHR_delay}
Bouin, E., Henderson, C., and Ryzhik, L.
\newblock The {B}ramson logarithmic delay in the cane toads equations.
\newblock {\em Quart.~Appl.~Math.}, 75(2):599--634, 2017.

\bibitem{BHR_acceleration}
E.~Bouin, C.~Henderson, and L.~Ryzhik.
\newblock Super-linear spreading in local and non-local cane toads equations.
\newblock {\em J.~Math.~Pures Appl.}, 108(9): 724-750, 2017.

\bibitem{CalvezHendersonMirrahimiTuranova}
Calvez, V., Henderson, C., Mirrahimi, S., and Turanova, O.
\newblock Propagation is slower in the non-local cane toads equation than in its local analogue.
\newblock {\em Forthcoming}.

\bibitem{ChampagnatMeleard}
Champagnat, N.~and M{\'e}l{\'e}ard, S.
\newblock Invasion and adaptive evolution for individual-based spatially
  structured populations.
\newblock {\em J. Math. Biol.}, 55(2):147--188, 2007.

\bibitem{Cole}
Cole, J.D.
\newblock On a quasi-linear parabolic equation occurring in aerodynamics.
\newblock {\em Quart. Appl. Math.} 9, (1951). 225–236.

\bibitem{CrandallIshiiLions}
Crandall, M.~G., Ishii, H., and Lions, P.-L.
\newblock User's guide to viscosity solutions of second order partial
  differential equations.
\newblock {\em Bull. Amer. Math. Soc. (N.S.)}, 27(1):1--67, 1992.

\bibitem{CrandallLionsSouganidis}
Crandall, M.~G., Lions, P.-L., and Souganidis, P.~E.
\newblock Maximal solutions and universal bounds for some partial differential
  equations of evolution.
\newblock {\em Arch. Rational Mech. Anal.}, 105(2):163--190, 1989.

\bibitem{EvansSouganidis}
Evans, L.~C.~and Souganidis, P.~E.
\newblock A {PDE} approach to geometric optics for certain semilinear parabolic
  equations.
\newblock {\em Indiana Univ. Math. J.}, 38(1):141--172, 1989.

\bibitem{Freidlin1}
Freidlin, M.
\newblock {\em Functional integration and partial differential equations},
  volume 109 of {\em Annals of Mathematics Studies}.
\newblock Princeton University Press, Princeton, NJ, 1985.

\bibitem{Freidlin2}
Freidlin, M.
\newblock Limit theorems for large deviations and reaction-diffusion equations.
\newblock {\em Ann. Probab.}, 13(3):639--675, 1985.

\bibitem{Fleming}
Fleming, W.H.
\newblock Logarithmic transformations and stochastic control. 
\newblock {\em Advances in filtering and optimal stochastic control (Cocoyoc, 1982)}, 131–141, 
\newblock Lect. Notes Control Inf. Sci., 42, Springer, Berlin, 1982. 

\bibitem{GigaSato}
Giga, Y.~and Sato, M.-H.
\newblock Neumann problem for singular degenerate parabolic equations.
\newblock {\em Differential Integral Equations}, 6(6):1217--1230, 1993.


\bibitem{Hopf}
Hopf, E.
\newblock The partial differential equation $u_t + u u_x = \mu u_{xx}$.
\newblock {\em Comm. Pure Appl. Math.} 3, (1950). 201–230.

\bibitem{IshiiSato}
Ishii, H.~and Sato, M.-H.
\newblock Nonlinear oblique derivative problems for singular degenerate
  parabolic equations on a general domain.
\newblock {\em Nonlinear Anal.}, 57(7-8):1077--1098, 2004.

\bibitem{LamLou}
Lam, K.-Y.~and Lou, Y.
\newblock An integro-PDE model for evolution of random dispersal.
\newblock {\em J. Funct. Anal.} 272 (2017), no.~5, 1755-1790.

\bibitem{Lions}
Lions, P.-L.
\newblock Some properties of the viscosity semigroup for {H}amilton-{J}acobi
  equations.
\newblock {\em pre-print}.

\bibitem{MajdaSouganidis}
 Majda, A.~J.~and Souganidis, P.~E.
\newblock Large-scale front dynamics for turbulent reaction-diffusion equations  with separated velocity scales.
\newblock {\em Nonlinearity}, 7(1):1--30, 1994.

\bibitem{PerthameSouganidis}
Perthame, B.~and  Souganidis, P.~E.
\newblock Rare mutations limit of a steady state dispersal evolution model.
\newblock {\em Math. Model. Nat. Phenom.}, 11(4):154--166, 2016.

\bibitem{phillips2006invasion}
 Phillips, B.~L., Brown, G.~P.,  Webb, J.~K., and Shine, R.
\newblock Invasion and the evolution of speed in toads.
\newblock {\em Nature}, 439(7078):803--803, 2006.

\bibitem{Shine}
Shine, R., Brown, G.~P., and Phillips, B.~L.
\newblock An evolutionary process that assembles phenotypes through space
  rather than through time.
\newblock {\em Proceedings of the National Academy of Sciences},
  108(14):5708--5711, 2011.

\bibitem{Thomas}
 Thomas, C.~D., Bodsworth, E.~J., Wilson,  R.~J., Simmons,  A.~D.,  Davis, Z.~G.,
 Musche, M., and Conradt, L.
\newblock Ecological and evolutionary processes at expanding range margins.
\newblock {\em Nature}, 411:577 -- 581, 2001.

\bibitem{Turanova}
Turanova, O.
\newblock On a model of a population with variable motility.
\newblock {\em Math. Models Methods Appl. Sci.}, 25(10):1961--2014, 2015.



\bibitem{urban2008toad}
Urban, M.~C., Phillips, B.~L., Skelly, D.~K., and Shine, R.
\newblock A toad more traveled: the heterogeneous invasion dynamics of cane toads in Australia.
\newblock {\em The American Naturalist}, 171(3):E134--E148, 2008.

\end{thebibliography}
%  %}}}1
%\end{document}

\end{document}